\theoremstyle{plain}
\newtheorem{theorem}{Theorem}[section]
\newtheorem*{theo}{Theorem A}
\newtheorem*{theoB}{Theorem B}
\newtheorem{conj}{Conjecture}[section]
\newtheorem{prop}[theorem]{Proposition}
\newtheorem{cor}[theorem]{Corollary}
\theoremstyle{definition}
\newtheorem{defin}[theorem]{Definition}
\theoremstyle{remark}
\newtheorem{rmk}[theorem]{Remark}
\newtheorem*{no}{Notation}
\newtheorem*{ack}{Acknowledgements}
\renewcommand{\hat}[1]{\widehat{#1}}
\renewcommand{\tilde}[1]{\widetilde{#1}}
\DeclareMathOperator{\Imm}{Im}
\renewcommand{\Im}{\Imm}
\newcommand{\set}[1]{{\left\{#1\right\}}}
\newcommand{\pa}[1]{{\left(#1\right)}}
\newcommand{\ra}{\rightarrow}
\newcommand{\sheaf}[2]{\mathcal{#1}_{#2}}
\newcommand{\fascio}[1]{\sheaf{O}{#1}}
\newcommand{\isom}{\xrightarrow{\sim}}
\newcommand{\pairing}{\langle \cdot, \cdot \rangle}
\newcommand{\m}[1]{\mathcal{#1}}
\DeclareMathOperator{\rk}{rk}
\DeclareMathOperator{\Hom}{Hom}
\DeclareMathOperator{\End}{End}
\DeclareMathOperator{\Aut}{Aut}
\DeclareMathOperator{\pic}{Pic}
\DeclareMathOperator{\Inv}{Inv}
\DeclareMathOperator{\Id}{Id}
\DeclareMathOperator{\NS}{NS}
\title[On the genus of curves in a Jacobian variety]{On the genus of curves in a Jacobian variety}
\author{Valeria Ornella Marcucci}
\address{Dipartimento di Matematica ``F. Casorati''\\
 	Universit\`a di Pavia\\
	via Ferrata 1, 27100 Pavia, Italy}
	\email{valeria.marcucci@unipv.it}
\thanks{This work has been partially supported by 1) FAR 2010 (PV) \emph{"Variet\`a algebriche, calcolo algebrico, grafi orientati e topologici"}; 2) INdAM
(GNSAGA)}
\date{\today}
\newcommand{\query}[1]{\marginpar{
\vskip-\baselineskip 
\raggedright\footnotesize                                   
\itshape\hrule\smallskip#1\par\smallskip\hrule}}            
\newcommand{\removequeries}{\renewcommand{\query}[1]{}}    
\begin{document}

\begin{abstract}
We prove that the geometric genus $p$ of a curve in a very generic Jacobian of dimension $g>3$ satisfies either $p=g$ or  $p> 2g-3$. This gives a positive answer to a conjecture of Naranjo and Pirola. For low values of $g$ the second inequality can be further improved to $p>2g-2$.
\end{abstract}

\maketitle

%

\section{Introduction}

In this paper we deal with curves in abelian varieties and, more precisely, in Jacobian varieties. This topic is classically known as \emph{theory of correspondences}: the $\mathbb{Z}$-module of the equivalence classes of correspondences between two curves is, in fact, canonically isomorphic to the group of homomorphisms between their Jacobians (see e.g. \cite[Theorem 11.5.1]{BL}). In \cite{artPirola} it is proved that all curves of genus $g$ lying on a very generic Jacobian variety of dimension $g\geq 4$ are birationally equivalent. We recall that a Jacobian $J\pa{C}$ is said to be \emph{very generic} if $[J\pa{C}]$ lies outside a countable union of proper analytic subvarieties of the Jacobian locus.


We give conditions on the possible genus of a curve on a Jacobian variety. Namely, we show:

\begin{theo}
Let $D$ be a curve lying on a very generic Jacobian variety $J\pa{C}$ of dimension greater or equal to $4$. Then the geometric genus $g\pa{D}$ satisfies one of the followings
\[
\text{(i) }\, g\pa{D}\geq 2g\pa{C}-2, \qquad \qquad \text{(ii) }\, g\pa{D}=g\pa{C}.
\]
\end{theo}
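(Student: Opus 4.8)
\emph{Proof proposal.} Write $A:=J\pa{C}$, $g:=g\pa{C}\geq 4$, and let $p:=g\pa{D}$ be the geometric genus of $D$. The plan is to pass to the normalisation $n\colon\tilde D\ra D\subset A$ and to study the induced homomorphism $\phi\colon J\pa{\tilde D}\ra A$ coming from the Albanese property of $f:=n$ composed with the inclusion. The very generic hypothesis enters here in its strongest form: $A$ is a \emph{simple} abelian variety with $\End\pa{A}=\mathbb{Z}$ and $\NS\pa{A}=\mathbb{Z}\theta$ (where $\theta$ is the class of the theta divisor), and more generally every Hodge class in $H^\ast\pa{A,\mathbb{Q}}$ is a polynomial in $\theta$. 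Since $D$ generates a nonzero abelian subvariety and $A$ is simple, $\phi$ is surjective. Set $k:=\dim\pa{\ker\phi}^0=p-g$. The whole theorem is then equivalent to the dichotomy
\[
k=0\qquad\text{or}\qquad k\geq g-2,
\]
so the task splits into identifying the case $k=0$ and excluding the intermediate range $1\leq k\leq g-3$.

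First I would treat $k=0$. Here $\phi$ is an isogeny between two principally polarised abelian varieties. Because $\NS\pa{A}=\mathbb{Z}$, the pulled back polarisation $\phi^\ast\theta$ is an integral multiple of the principal polarisation of $J\pa{\tilde D}$; an elementary comparison of types forces this multiple to be $1$, so $\phi$ is an isomorphism of principally polarised abelian varieties. By the Torelli theorem (in the Matsusaka--Ran form, which also guarantees that the generating curve is recovered) one gets $\tilde D\cong C$, whence $p=g$: this is alternative (ii). Along the way I would record that, since $\sq{f_\ast\tilde D}\in H^{2g-2}\pa{A,\mathbb{Z}}$ is a Hodge class, very genericity gives $\sq{f_\ast\tilde D}=m\,\theta^{g-1}/\pa{g-1}!$ for some integer $m\geq 1$, hence $\deg f^\ast\mathcal{O}_A\pa\theta=\sq{f_\ast\tilde D}\cdot\theta=mg$; this numerical datum will feed the second case.

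The substance is the case $k\geq 1$, where I must prove $p\geq 2g-2$. The idea is to exploit the theory of correspondences invoked in the introduction. The pair $\pa{\phi,\hat\phi}$, with $\hat\phi$ the dual homomorphism, produces a correspondence $T\subset \tilde D\times C$; since $\End\pa{A}=\mathbb{Z}$, the composite self-correspondence on $C$ is multiplication by an integer, so $T$ \emph{has a valence}. I would then combine a Cayley--Brill/Zeuthen type relation for $T$ with the ramification analysis of the sublinear system $W:=f^\ast H^0\pa{A,\Omega^1_A}\subset H^0\pa{\tilde D,K_{\tilde D}}$, which is the Gauss system of $f\pa{\tilde D}$ and has degree controlled by the number $mg$ above. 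The base points of $W$ measure the ramification of $f$, and feeding these into the numerical relation should yield a lower bound for $2p-2$ forcing $p\geq 2g-2$. The very generic hypothesis is used a second time, through a spreading--out (countability/monodromy) argument: $D$ persists in a family over a neighbourhood of $\sq{A}$, so that $m$, the splitting $J\pa{\tilde D}\sim A\times\pa{\ker\phi}^0$ and the valence of $T$ remain constant, and the irreducibility of $H^1\pa{C,\mathbb{Q}}$ under the $\sym$plectic monodromy pins down the decomposition of $H^1\pa{\tilde D,\mathbb{Q}}$ orthogonally to the image of $f^\ast$.

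I expect the main obstacle to be precisely the exclusion of the intermediate genera, i.e. showing that no curve with $g<p<2g-2$ can generate $A$ while leaving a positive--dimensional complementary factor $\pa{\ker\phi}^0$. Concretely, the difficulty is to control simultaneously the valence of $T$ and the ramification of $f$ finely enough that the numerical inequality closes the gap between $p=g$ and $p\geq 2g-2$, rather than producing a weaker bound such as $p\geq\pa{g+1}/2$ obtainable from Clifford's theorem alone. I anticipate that the sharper estimates for small $g$, upgrading the conclusion to $p>2g-2$, will come from refining exactly this step by a case analysis of the possible ramification profiles of $f$ and of the polarisation type induced on $\pa{\ker\phi}^0$.
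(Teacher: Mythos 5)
Your reduction is sound: with $A$ simple and $\End\pa{A}=\mathbb{Z}$, the theorem is indeed equivalent to the dichotomy $k=0$ or $k\geq g-2$ for $k=\dim\pa{\ker\phi}^0=p-g$, and when $k=0$ alternative \emph{(ii)} holds by definition. (Your extra discussion in that case is both unnecessary and flawed: the sub-step ``an elementary comparison of types forces this multiple to be $1$'' is false as stated, since $\mathfrak{m}_n^*\theta=n^2\theta$ is an integral multiple of a principal polarization without $\mathfrak{m}_n$ being an isomorphism, and $\NS\pa{A}=\mathbb{Z}$ says nothing about $\NS\pa{J\pa{\tilde D}}$; but none of this affects the genus statement.) The genuine gap is exactly where you locate it yourself: the exclusion of the intermediate range $1\leq k\leq g-3$. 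For this you offer only a plan --- a correspondence $T\subset \tilde D\times C$ with valence, a Cayley--Brill count, and ramification of the Gauss system --- and you concede you cannot close the numerical gap. There is a structural reason this plan would fail: correspondence numerology on a \emph{fixed} pair $\pa{\tilde D,C}$ only sees the part of $H^1\pa{\tilde D,\mathbb{Q}}$ coming from $C$ and is blind to the complementary factor $\pa{\ker\phi}^0$, which is precisely where the genus excess $k$ lives; united-point inequalities of Castelnuovo type yield bounds of Clifford strength, as you anticipate, not $p\geq 2g-2$. Moreover, nothing in your set-up can produce the specific threshold $2g-2$.

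The paper's proof is of a completely different nature and explains where that threshold comes from. One spreads $D$ out over a finite cover $\m{V}$ of a dense open set of $M^0_g$, factors the family map $F\colon J\pa{\m{D}}\ra J\pa{\m{C}}$ into $G$ with connected kernel followed by an isogeny $H\colon \m{B}\ra J\pa{\m{C}}$, and degenerates $C$ to an irreducible one-nodal curve $C_{p,q}$. The hypothesis $p<2g-2$ enters exactly to apply the rigidity result (Theorem \ref{teo:xiao}) to the limiting abelian part of dimension $n=g-1$: since the components of $\tilde{\m{D}_0}$ have genus at most $p-1<2g-3=2n-1$, the limit map on abelian parts is independent of $[p-q]\in \Gamma'_C$, and the comparison of extension classes in $\pic^0$ (Propositions \ref{prop:norm} and \ref{prop:D0}) shows that the image of $H^1\pa{J\pa{\m{D}_0},\mathbb{Z}}\ra H^1\pa{J\pa{\m{C}_0},\mathbb{Z}}$ is $nH^1\pa{J\pa{\m{C}_0},\mathbb{Z}}$. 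A double degeneration combined with the Picard--Lefschetz formula and Proposition \ref{prop:monodromydoubledegen} then gives $\m{B}\simeq J\pa{\m{C}}$ with $H=\mathfrak{m}_n$, and a final polarization argument (the induced polarization $\Xi$ coincides with $\Theta$, so the relevant sequence of abelian varieties splits, contradicting the irreducibility of the theta divisor of $J\pa{\m{D}_t}$ unless $\ker G=0$) forces $k=0$, i.e.\ case \emph{(ii)}. None of these ingredients --- degeneration to semi-abelian limits, rigidity, monodromy --- appears in your proposal, so as it stands it does not prove the theorem.
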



Theorem A gives a positive answer to a conjecture stated in \cite{NarPir}, where the authors prove an analogous statement for Prym varieties. An equivalent formulation of Theorem A is the following:

\begin{theoB}
Given two smooth projective curves $C$ and $D$, where $C$ is very generic, $g\pa{C}\geq 4$ and $g\pa{D}<2g\pa{C}-2$, then either the N\'eron-Severi group of $C\times D$ has rank two, or it has rank three and $C\simeq D$.
\end{theoB}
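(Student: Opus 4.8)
The plan is to convert the statement about $\NS\pa{C\times D}$ into a statement about homomorphisms of Jacobians and then feed it into Theorem A. By the classical description of correspondences recalled in the introduction (see \cite[Theorem 11.5.1]{BL}), there is a canonical decomposition
\[
\NS\pa{C\times D}\cong \NS\pa{C}\oplus \NS\pa{D}\oplus \Hom\pa{J\pa{C},J\pa{D}},
\]
the first two summands being generated by the classes of the fibres of the two projections. Since $\Hom\pa{J\pa{C},J\pa{D}}$ is a free $\mathbb{Z}$-module, one has $\rk \NS\pa{C\times D}=2+\rk\Hom\pa{J\pa{C},J\pa{D}}$; thus rank two means $\Hom\pa{J\pa{C},J\pa{D}}=0$ and rank three means this group has rank one. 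Because $C$ is very generic, $J\pa{C}$ avoids the countably many proper subloci where extra endomorphisms appear, so $J\pa{C}$ is simple and $\End\pa{J\pa{C}}=\mathbb{Z}$. Writing $J\pa{D}$ up to isogeny in its isotypic decomposition and using that $\End\pa{J\pa{C}}\otimes\mathbb{Q}=\mathbb{Q}$, I would identify $\rk\Hom\pa{J\pa{C},J\pa{D}}$ with the multiplicity $m$ with which $J\pa{C}$ occurs as an isogeny factor of $J\pa{D}$.

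The first reduction is to bound $m$. If $m\geq 2$, then $J\pa{C}^{2}$ is an isogeny factor of $J\pa{D}$, whence $g\pa{D}=\dim J\pa{D}\geq 2g\pa{C}$, contradicting the hypothesis $g\pa{D}<2g\pa{C}-2$. Hence $m\leq 1$, so $\rk\NS\pa{C\times D}\leq 3$. If $m=0$ we are in the rank-two case and there is nothing more to prove, so it remains to treat $m=1$ and show $C\simeq D$.

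Assume $m=1$. Choosing a nonzero $f\in\Hom\pa{J\pa{D},J\pa{C}}$ (a group which has the same rank $m=1$) and using that $J\pa{C}$ is simple, $f$ is surjective; composing with an Abel--Jacobi embedding $D\hookrightarrow J\pa{D}$ produces a morphism $\psi\colon D\to J\pa{C}$ which is non-constant, since its image generates the image of $f$, namely all of $J\pa{C}$. Let $D_{0}=\psi\pa{D}$ and let $\tilde{D}_{0}$ be its normalisation, so that $\psi$ factors as $D\to\tilde{D}_{0}\to J\pa{C}$ with $\tilde{D}_{0}\subset J\pa{C}$ a curve whose Abel--Jacobi image generates $J\pa{C}$. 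Then $J\pa{\tilde{D}_{0}}$ surjects onto $J\pa{C}$ and is dominated by $J\pa{D}$, which gives $g\pa{C}\leq g\pa{\tilde{D}_{0}}\leq g\pa{D}<2g\pa{C}-2$. Applying Theorem A to the curve $\tilde{D}_{0}$ lying on $J\pa{C}$ rules out $g\pa{\tilde{D}_{0}}\geq 2g\pa{C}-2$ and forces $g\pa{\tilde{D}_{0}}=g\pa{C}$. By \cite{artPirola}, every curve of genus $g\pa{C}$ on the very generic Jacobian $J\pa{C}$ is birational to $C$, so $\tilde{D}_{0}\simeq C$.

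It remains to see that the covering $D\to\tilde{D}_{0}$ has degree one. By Riemann--Hurwitz, a cover of degree $d\geq 2$ of a curve of genus $g\pa{C}\geq 4$ would satisfy $g\pa{D}\geq d\pa{g\pa{C}-1}+1\geq 2g\pa{C}-1$, again contradicting $g\pa{D}<2g\pa{C}-2$. Hence $D\to\tilde{D}_{0}$ is birational and $D\simeq\tilde{D}_{0}\simeq C$, completing the rank-three case. I expect the only real difficulty to be concentrated in this last case: the passage from the abstract curve $D$ to an honest curve $\tilde{D}_{0}$ on $J\pa{C}$ of controlled geometric genus (where Theorem A and \cite{artPirola} do the work), together with the degree bound that prevents $D$ from being a nontrivial cover. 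Everything else is the formal dictionary between $\NS\pa{C\times D}$, $\Hom\pa{J\pa{C},J\pa{D}}$, and isogeny multiplicities, for which the genericity of $C$ supplies $\End\pa{J\pa{C}}=\mathbb{Z}$.
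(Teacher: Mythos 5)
Your proposal is correct, and it reaches the conclusion by a genuinely different route than the paper. The paper's proof of Theorem B is the one-line reduction stated right after it: B follows from the \emph{structural} fact --- established over Section \ref{sec:mainTheorem} via the factorization $G\colon J\pa{\m{D}}\ra \m{B}$, $H\colon \m{B}\ra J\pa{\m{C}}$, degeneration to nodal curves, comparison of extension classes, and the double-degeneration monodromy argument --- that any surjective $f\colon J\pa{D}\ra J\pa{C}$ forces $J\pa{D}\simeq J\pa{C}$ with $f=\mathfrak{m}_n$; then $\NS\pa{J\pa{C}}=\mathbb{Z}$ upgrades this to an isomorphism of principally polarized abelian varieties and Torelli \cite{andreotti} gives $C\simeq D$. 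You share the initial dictionary $\NS\pa{C\times D}\cong \mathbb{Z}^2\oplus \Hom\pa{J\pa{C},J\pa{D}}$ (implicit in the paper, via \cite[Theorem 11.5.1]{BL}) and the multiplicity bound $m\leq 1$ coming from $\End\pa{J\pa{C}}=\mathbb{Z}$ and $g\pa{D}<2g\pa{C}-2$, but in the rank-three case you avoid the structural fact entirely: you apply only the genus dichotomy of Theorem A (Theorem \ref{teo:main}) to the image curve $D_0=\psi\pa{D}\subset J\pa{C}$, which together with $g\pa{\tilde{D_0}}\leq g\pa{D}<2g\pa{C}-2$ pins $g\pa{\tilde{D_0}}=g\pa{C}$; then the Bardelli--Pirola theorem \cite{artPirola} (quoted in the introduction) identifies $\tilde{D_0}$ with $C$, and Riemann--Hurwitz excludes any cover $D\ra \tilde{D_0}$ of degree $d\geq 2$, since it would give $g\pa{D}\geq 2g\pa{C}-1$. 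Every step checks out: simplicity of $J\pa{C}$ makes any nonzero $f$ surjective and $\psi$ non-constant, and there is no circularity, since the paper proves Theorem A independently of B. The trade-off: the paper's route yields the stronger output that the correspondence is essentially $n$ times the graph of an isomorphism of Jacobians, whereas your route shows the implication A~$\Rightarrow$~B is soft --- needing nothing beyond the statement of A, the known result of \cite{artPirola}, and elementary Riemann--Hurwitz --- at the price of not recovering the multiplication-by-$n$ description of $f$.
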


Theorem B is implied by the fact that, in the previous hypotheses, if $f\colon J\pa{D} \ra J\pa{C}$ is a surjective map, then $J\pa{D}$ is isomorphic to $J\pa{C}$ and $f$ is the multiplication by a non-zero integer $n$.

We briefly outline the strategy of the proof: first we factorize the map $f\colon J\pa{D}\ra J\pa{C}$ into a surjective map $g\colon J\pa{D}\ra B$ of abelian varieties with connected kernel and an isogeny $h\colon B \ra J\pa{C}$. Then we study independently the two maps by a degeneration argument. The key point is the analysis of the limit $f_0\colon J\pa{D_0}\ra J\pa{C_0}$ of $f$, when $C$ degenerates to the Jacobian of an irreducible stable curve with one node.

By a rigidity result (see \cite{xiao}), if we let $C_0$ vary by keeping its normalization fixed, then also the normalization of $D_0$ does not change. The comparison of the relations between the extension classes in $\pic^0(J(\tilde D_0))$ and $\pic^0(J(\tilde C_0))$ of the two generalized Jacobians shows that the image of the map $H^1\pa{J\pa{\m{D}_0}, \mathbb{Z}}\ra H^1\pa{J\pa{\m{C}_0}, \mathbb{Z}}$, between the cohomology groups, is $n H^1\pa{J\pa{\m{C}_0}, \mathbb{Z}}$ for some non-zero integer $n$ (see Section \ref{subsec:Comparison of the extension classes}). This argument is an adaptation of the proof for the case $g\pa{D}=g\pa{C}$ (see \cite{artPirola}), but our setting requires a more careful analysis of the relations between the extension classes. For example, it is not sufficient to consider only the limit $f_0$, but we have to take into account also the relations coming from the limit of the dual map $\hat f_0$.

The comparison of two independent degenerations of the previous type allows us to prove that $B\simeq J\pa{C}$ and $h\colon B\ra J\pa{C}$ is the multiplication by $n$ (see Section \ref{subsec:doubledeg}). To conclude the proof, we notice that, since $J\pa{C}$ is very generic, the polarization $\Xi$, induced by $J\pa{D}$ on $B\simeq J\pa{C}$, is an integral multiple of the standard principal polarization of $J\pa{C}$. The analysis of the behavior of the map $g\colon J\pa{D}\ra B\simeq J\pa{C}$ at the boundary shows that $\Xi$ is principal. From the irreducibility of $J\pa{D}$ it follows that $g$ is an isomorphism (see Section \ref{subsec:conclusion}).

It seems natural to suppose that strict inequality holds in case \emph{(i)} of Theorem A, that is: there are no curves of genus $2g\pa{C}-2$ on a very generic Jacobian $J\pa{C}$ of dimension $g\pa{C}\geq 4$ (see Conjecture \ref{conj}). However, the previous argument cannot be applied because it is no longer possible to use the rigidity result. In the last part of the paper we prove a weaker statement which supports our conjecture: given a map $f\colon J\pa{D} \ra J\pa{C}$, from a smooth curve of genus $2g\pa{C}-2$ to a very generic Jacobian of dimension $g\pa{C}\geq 4$, the induced map $\varphi\colon D \ra A$, where $A$ is the identity component of the kernel of $f$, is birational on the image (see Proposition \ref{prop:bir}). The proof is based on a result on Prym varieties stated in \cite{PrymVP}, which asserts that a very generic Prym variety, of dimension greater or equal to $4$, of a ramified double covering is not isogenous to a Jacobian variety. When $g\pa{C}=4$ or $g\pa{C}=5$, the analysis of the possible deformations of $D$ in $A$ shows that $\varphi$ is constant and thus that $f$ is the zero map.

\section{Notations and preliminaries}
We work over the field $\mathbb{C}$ of complex numbers.

Each time we have a family of objects parameterized by a scheme $X$ (respectively by a subset $Y\subset X$) we say that the \emph{generic} element of the family has a certain property $\mathfrak{p}$ if $\mathfrak{p}$ holds on a non-empty Zariski open subset of $X$ (respectively of $Y$). Moreover, we say that a \emph{very generic} element of $X$ (respectively of $Y$) has the property $\mathfrak{p}$ if $\mathfrak{p}$ holds on the complement of a union of countably many proper subvarieties of $X$ (respectively of $Y$).

We denote by $M_g$ the moduli space of smooth projective curves of genus $g$ and by $\bar M_g$ the Deligne-Mumford compactification of $M_g$, that is the moduli space of stable curves of genus $g$. Let $M_g^0$ be the open set of $M_g$ whose points correspond to curves with no non-trivial automorphisms and let $\bar M_g^0$ be the analogous open set in $\bar M_g$. We denote by $\delta_0$ the divisor of $\bar M_g$ whose generic point parameterizes the isomorphism class of an irreducible stable curve with one node.

Given a projective curve $C$, we denote by $g\pa{C}$ its geometric genus. Given an abelian variety $A$ we will denote sometimes by $\hat A$ its dual abelian variety $\pic^0\pa{A}$.

We recall that if $J$ is a very generic Jacobian, then $\rk\pa{\NS\pa{J}}=1$; in particular $J$ has no non-trivial abelian subvarieties (cf. \cite{baseN}).

We will also need the following results:

\begin{theorem}[{\cite[Remark 2.7]{xiao}}]
\label{teo:xiao}
Let $J$ be a very generic Jacobian of dimension $n\geq 2$, $D$ be a smooth projective curve and $f\colon D \ra J$ be a non-constant map. If $g\pa{D}<2n-1$, then the only deformations of $\pa{D,f}$, with $J$ fixed, are obtained by composing $f$ with translations.
\end{theorem}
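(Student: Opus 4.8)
The plan is to compute the tangent space to the deformations of $\pa{D,f}$ with $J$ fixed and to show that it consists only of the translations. Since $J$ is an abelian variety its tangent bundle is trivial, $T_J \cong V \otimes \fascio{J}$ with $V = H^0\pa{J, T_J}$, so that $f^* T_J \cong V \otimes \fascio{D}$ is the trivial bundle of rank $n$. The first-order deformations of $\pa{D,f}$ leaving $J$ fixed are classified by the hypercohomology $\mathbb{H}^1$ of the complex $[T_D \xrightarrow{df} f^* T_J]$ concentrated in degrees $0$ and $1$. A nonconstant map to a simple abelian variety of dimension $\geq 2$ forces $g\pa{D} \geq 2$, hence $H^0\pa{D, T_D} = 0$, and the hypercohomology sequence reduces to
\[
0 \ra H^0\pa{D, f^* T_J} \ra \mathbb{H}^1 \ra \ker\pa{H^1(df)\colon H^1\pa{D,T_D}\ra H^1\pa{D, f^*T_J}} \ra 0 .
\]
The first term is $H^0\pa{D, f^* T_J} = V \otimes H^0\pa{D, \fascio{D}} \cong V$, of dimension $n$, and these classes are exactly the infinitesimal translations $a \mapsto t_a \circ f$. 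As the translations form a smooth unobstructed $n$-dimensional family of honest deformations, it suffices to prove that $H^1(df)$ is injective: this forces $\dim \mathbb{H}^1 = n$ and identifies the whole deformation space with the translations.

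By Serre duality the injectivity of $H^1(df)$ is equivalent to the surjectivity of the multiplication map
\[
\mu\colon W \otimes H^0\pa{D, K_D} \ra H^0\pa{D, K_D^{\otimes 2}}, \qquad W := f^* H^0\pa{J, \Omega_J} \subseteq H^0\pa{D, K_D},
\]
induced by the codifferential $f^* \Omega_J \ra \Omega_D$. Because $J$ is very generic it is simple, so the image of $f$ generates $J$; hence $f^*$ is injective and $\dim W = n$. The hypothesis $g\pa{D} < 2n - 1$ gives $\dim W = n > \tfrac{g\pa{D}+1}{2}$, so that $\dim\pa{W \otimes H^0\pa{D,K_D}} = n\, g\pa{D} \geq 3g\pa{D} - 3 = \dim H^0\pa{D,K_D^{\otimes 2}}$ and the surjectivity of $\mu$ is at least numerically possible.

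The main obstacle is precisely this surjectivity, since it can fail for special curves: already for $W = H^0\pa{D,K_D}$ it fails exactly when $D$ is hyperelliptic, which is where the very generic hypothesis becomes essential rather than cosmetic. Hodge-theoretically, a nontrivial deformation $f_t\colon D_t \ra J$ with $J$ fixed produces a flat family of morphisms of Hodge structures $f_t^*\colon H^1\pa{J, \mathbb{Z}} \ra H^1\pa{D_t, \mathbb{Z}}$; differentiating the condition $f_t^* H^{1,0}\pa{J} \subseteq H^{1,0}\pa{D_t}$ shows that the second fundamental form, namely the cup product with the Kodaira--Spencer class $\kappa \in H^1\pa{D, T_D}$, annihilates $W$, which is again the statement $\kappa \in \ker H^1(df)$. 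The plan is then to invoke the very generic hypothesis --- $J$ simple with $\End\pa{J} = \mathbb{Z}$ and $\rk \NS\pa{J} = 1$ --- to exclude the special linear systems $W$ and the exceptional curves $D$ (such as the hyperelliptic ones, whose Jacobians are not very generic) for which $\mu$ could degenerate, thereby forcing $\kappa = 0$. Once $H^1(df)$ is injective, the deformation space of $\pa{D,f}$ coincides with the $n$-dimensional family of translations, which is the assertion.
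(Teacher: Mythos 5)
First, a point of comparison: the paper contains no proof of Theorem \ref{teo:xiao} at all --- it is imported verbatim from \cite[Remark 2.7]{xiao} --- so the benchmark is Pirola's proof, and the substance of the theorem lives entirely in that reference. Your attempt reproduces correctly the routine part of any such proof: the identification of first-order deformations of $\pa{D,f}$ with the hypercohomology of $[T_D \ra f^*T_J]$, the exact sequence isolating the translations $H^0\pa{D, f^*T_J}\simeq V$, the Serre-duality translation of injectivity of $H^1(df)$ into surjectivity of the multiplication map $\mu\colon W\otimes H^0\pa{D,K_D}\ra H^0\pa{D,K_D^{\otimes 2}}$ with $W=f^*H^0\pa{J,\Omega_J}$ of dimension $n$, and the equivalent formulation that a nonzero Kodaira--Spencer class $\kappa$ would have to annihilate $W$ under cup product. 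But the proof stops exactly where the theorem begins. Your last two paragraphs are an announced plan, not an argument: the dimension count $n>\pa{g\pa{D}+1}/2$ only shows that surjectivity of $\mu$ is not numerically obstructed, and there is no general-position principle that upgrades this --- the question is a Max Noether-type statement about the pair $\pa{D,W}$, and the very-genericity of $J$ gives no direct grip on it, since $\End\pa{J}=\mathbb{Z}$, simplicity of $J$ and $\rk\NS\pa{J}=1$ are properties of $J$, not of $D$ or of the linear subsystem $W\subseteq H^0\pa{D,K_D}$.

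Two concrete flaws in the sketched plan. First, the parenthetical claim that hyperelliptic $D$ can be excluded because ``their Jacobians are not very generic'' confuses $J\pa{D}$ with $J$: the genericity hypothesis constrains $J=J\pa{C}$ only, and nothing a priori prevents a hyperelliptic curve $D$ of genus $<2n-1$ from mapping nonconstantly to a very generic $J$; any correct proof must handle that case rather than dismiss it. Second, the abstract properties you invoke hold equally for a very generic abelian variety, whereas the actual proof in \cite{xiao} is a genuine piece of infinitesimal variation theory showing that $\kappa\cup W=0$ with $\dim W=n>\pa{g\pa{D}+1}/2$ forces $\kappa=0$; this is the heart of the matter and is entirely absent from your write-up. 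That no soft argument can close this gap is visible in the paper itself: the bound $g\pa{D}<2n-1$ is essentially sharp for the method, and already the borderline case $g\pa{D}=2n-1=7$, $n=4$ requires the separate geometric input of Bardelli--Ciliberto--Verra (Proposition \ref{prop:minixiao}) rather than any numerical or generic-position reasoning. (A minor additional caveat: injectivity of $H^1(df)$ is a priori stronger than rigidity modulo translations, since the deformation space could be obstructed; using it as a sufficient criterion is legitimate, but it raises the bar on precisely the step you leave open.)
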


\begin{cor}
\label{cor:xiao}
Let $J$ be a very generic Jacobian of dimension $n\geq 2$, $\m{D}$ be a family of smooth projective curves over a smooth connected scheme $B$ and
\[
F\colon \pa{J\times B}/B \ra J\pa{\m{D}}/B
\]
be a non-constant map of families of Jacobians. If
\[
\dim J\pa{\m{D}}-\dim B<2n-1,
\]
then $\m{D}$ is a trivial family.\query{una mappa tra var. abeliane è rigida, modulo traslazioni. reticolo in reticolo!}
\end{cor}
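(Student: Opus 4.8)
The plan is to reduce the statement to the rigidity of Theorem \ref{teo:xiao} by dualising $F$ and composing with the Abel--Jacobi map, so as to manufacture, for each $b\in B$, a non-constant map from the fibre $D_b$ of $\m{D}$ to the fixed Jacobian $J$. Write $F_b\colon J\ra J\pa{D_b}$ for the morphism of abelian varieties induced by $F$ over $b$. First I would dualise to get $\hat{F}_b\colon \hat{J\pa{D_b}}\ra \hat{J}$, and then use the canonical principal polarisations $\lambda_b\colon J\pa{D_b}\isom \hat{J\pa{D_b}}$ and $\mu\colon J\isom \hat{J}$ of the two Jacobians, together with the Abel--Jacobi embedding $a_b\colon D_b\ra J\pa{D_b}$, to form
\[
\varphi_b\colon D_b\xrightarrow{a_b} J\pa{D_b}\xrightarrow{\lambda_b}\hat{J\pa{D_b}}\xrightarrow{\hat{F}_b}\hat{J}\xrightarrow{\mu^{-1}} J.
\]

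Since $F$ is non-constant, $F_b$, and hence $\hat{F}_b$, is a non-zero homomorphism for $b$ in a dense open subset of $B$. As $a_b\pa{D_b}$ generates $J\pa{D_b}$ and a non-zero homomorphism of abelian varieties cannot be constant on a generating subvariety, each such $\varphi_b$ is non-constant. The crucial numerical observation is that the genus of the fibres equals $g\pa{D_b}=\dim J\pa{\m{D}}-\dim B$, so the hypothesis of the corollary is precisely the hypothesis $g\pa{D_b}<2n-1$ of Theorem \ref{teo:xiao}.

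The maps $\varphi_b$ should then be assembled into a single morphism $\m{D}\ra J\times B$ over $B$, that is, into a deformation of the pair $\pa{D_{b_0},\varphi_{b_0}}$ with the target $J$ held fixed. Applying Theorem \ref{teo:xiao} fibrewise, every such deformation is obtained from $\varphi_{b_0}$ by a translation of $J$; in particular the classifying map $B\ra M_{g\pa{D}}$ has everywhere vanishing differential, and since $B$ is smooth and connected it is constant, whence $\m{D}$ is a trivial family. The main obstacle I anticipate is not the dualisation, which is formal, but the passage from the infinitesimal rigidity furnished by Theorem \ref{teo:xiao} to the global triviality of $\m{D}$ over all of $B$: one must verify that the family $\set{\varphi_b}$ is genuinely algebraic and that the non-vanishing of $F_b$ holds on a dense open set, so that the deformation hypotheses of the theorem are met uniformly and the resulting translations vary so as to force the curves themselves, and not merely their images, to be constant.
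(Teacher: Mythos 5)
Your proposal is correct and is precisely the argument the paper intends: Corollary \ref{cor:xiao} is stated there without proof (the margin note only hints that maps of abelian varieties are rigid modulo translations), and your reduction --- dualizing $F_b$, using the principal polarizations and the Abel--Jacobi map to produce non-constant maps $\varphi_b\colon \m{D}_b \ra J$ with $g\pa{\m{D}_b}=\dim J\pa{\m{D}}-\dim B<2n-1$, then invoking Theorem \ref{teo:xiao} fibrewise to make the classifying map $B\ra M_{g\pa{\m{D}_b}}$ constant --- is the standard way to fill it in. The one worry you flag resolves itself: the locus where $F_b$ vanishes is open and closed, since the induced morphism of $H_1$ local systems is locally constant, so over the smooth connected base $F_b\neq 0$ for every $b$, not merely on a dense open set, and the rigidity hypothesis holds uniformly.
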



\subsection{Semi-abelian varieties}
\label{subsec:semiabvar}

Throughout the paper we will use some properties of degenerations of abelian varieties. Standard references for this topic are \cite{FaltingsLibro} and \cite{ChaiLibro} or, for Jacobian varieties, \cite{SerreLibroJacobDeg, NeronModels, alexeevcomp, OdaSeshadri}. Here we recall some basic facts.

\begin{defin}
Given an abelian variety $A$, its \emph{Kummer variety} $\mathcal{K}\pa{A}$ is the quotient of $A$ by the involution $x \mapsto -x$. We denote by  $\mathcal{K}^0\pa{A}$ the Kummer variety of $\pic^0\pa{A}$.
\end{defin}

\begin{defin}
A \emph{semi-abelian variety} $S$ of rank $n$ is  an extension
\begin{equation}
\label{eq: varsemiab}
0 \ra T \ra S \ra A \ra 0
\end{equation}
of an abelian variety $A$ by an algebraic torus $T=\prod^n \mathbb{G}_m$.
\end{defin}
Note that \eqref{eq: varsemiab} induces a long exact sequence
\begin{equation*}
1 \ra  H^0\pa{A, \fascio{A}^*} \ra H^0\pa{S ,\fascio{S}^*} \ra H^0\pa{T, \fascio{T}^*} \xrightarrow{\delta} H^1\pa{A, \fascio{A}^*},
\end{equation*}
and thus a group homomorphism
\begin{equation}
\label{eq:homcar}
h\colon \mathbb{Z}^n\simeq\Hom\pa{T, \mathbb{G}_m}\ra \pic^0\pa{A}.
\end{equation}
Viceversa, each homomorphism $h$ determines an exact sequence as in \eqref{eq: varsemiab} (see \cite[Chapter II, Section 2]{ChaiLibro}). In particular, the classes of isomorphism of semi-abelian varieties of rank $1$ with compact part $A$ are parameterized (up to multiplication by $-1$) by the homomorphisms $\mathbb{Z} \ra \pic^0\pa{A}$ and, consequently, by the points of the Kummer variety $\mathcal{K}^0\pa{A}$. By the previous argument we have the following proposition.
\begin{prop}
\label{prop:mappasemiab}
Consider two semi-abelian varieties
\[
0 \ra T \ra S \ra A \ra 0, \qquad 0 \ra T' \ra S' \ra A' \ra 0.
\]
A map $f\colon S \ra S'$ is determined by a map of abelian varieties $g\colon A \ra A'$ and by a morphism of groups
\[
\chi\colon \Hom\pa{T', \mathbb{G}_m}\ra \Hom\pa{T, \mathbb{G}_m}
\]
such that the following diagram commutes
\[
\xymatrix{
\mathbb{Z}^m\simeq \Hom\pa{T', \mathbb{G}_m}\ar[d]_{\chi}\ar[r] & \pic^0\pa{A'}\ar[d]^{g^*}\\
\mathbb{Z}^n\simeq \Hom\pa{T, \mathbb{G}_m}\ar[r] & \pic^0\pa{A}
}
\]
\end{prop}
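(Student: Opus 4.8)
The plan is to recover the pair $\pa{g,\chi}$ from $f$ by using that the toric part of a semi-abelian variety is intrinsic, and then to deduce the commutativity of the square from the naturality of the connecting homomorphism $\delta$ that defines $h$; throughout I regard $f$ as a homomorphism of algebraic groups. First I would show that $f$ automatically preserves the extension structure, i.e. $f\pa{T}\subseteq T'$. Indeed, the composite $T\xrightarrow{f} S' \ra A'$ is a homomorphism from a torus to an abelian variety, and every such homomorphism is trivial: a homomorphism $\mathbb{G}_m \ra A'$ extends to a morphism $\mathbb{P}^1 \ra A'$, which is constant since $A'$ contains no rational curves. Hence $f\pa{T}$ lies in $\ker\pa{S'\ra A'}=T'$, so $f$ restricts to a homomorphism of tori $f|_T\colon T \ra T'$ and descends to a homomorphism of abelian varieties $g\colon A=S/T \ra S'/T'=A'$. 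Applying the contravariant character functor $\Hom\pa{-,\mathbb{G}_m}$ to $f|_T$ produces $\chi\colon \Hom\pa{T',\mathbb{G}_m}\ra \Hom\pa{T,\mathbb{G}_m}$, $\lambda'\mapsto \lambda'\circ f|_T$, which gives the pair $\pa{g,\chi}$.

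\emph{Commutativity.} The map $f$ now defines a morphism between the two fiber sequences $T\ra S \ra A$ and $T'\ra S'\ra A'$ compatible with the projections, hence a morphism between the corresponding long exact sequences of $\mathcal{O}^{*}$-cohomology. By naturality of the connecting homomorphism we obtain a commutative square
\[
\xymatrix{
H^0\pa{T',\fascio{T'}^*} \ar[r]^{\delta'} \ar[d]_{(f|_T)^{*}} & H^1\pa{A',\fascio{A'}^*} \ar[d]^{g^*}\\
H^0\pa{T,\fascio{T}^*} \ar[r]^{\delta} & H^1\pa{A,\fascio{A}^*}
}
\]
Restricting the upper-left corner to the subgroup of characters $\Hom\pa{T',\mathbb{G}_m}$, the horizontal maps become $h'$ and $h$ by the very definition of these homomorphisms, the left vertical map becomes $\chi$, and the right vertical map restricts to $g^*\colon \pic^0\pa{A'}\ra \pic^0\pa{A}$. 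This is exactly the asserted diagram.

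\emph{Reconstruction.} Finally, $\pa{g,\chi}$ determines $f$: the character map $\chi$ recovers $f|_T$ uniquely, and given $\pa{f|_T,g}$ the homomorphisms $S\ra S'$ lifting them form a torsor under $\Hom\pa{A,T'}$, which vanishes because every homomorphism from an abelian variety to a torus is trivial. Conversely, a compatible pair $\pa{g,\chi}$ makes the pushout of the extension class of $S$ along $f|_T$ coincide with the pullback of the extension class of $S'$ along $g$---which is precisely the content of the square read through the identification of $h$ and $h'$ with the extension classes---so a morphism of extensions covering $\pa{f|_T,g}$ exists.

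I expect the only genuinely delicate point to be the bookkeeping in the commutativity step: one must check that $\delta$ restricts to $h$ on characters and that it is natural with respect to $f$. Everything else---the triviality of homomorphisms between a torus and an abelian variety in either direction, and the resulting existence and uniqueness of the lift---is a formal consequence of tori being affine and abelian varieties projective.
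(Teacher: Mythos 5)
Your proof is correct and follows essentially the same route as the paper, which deduces the proposition from the classification of semi-abelian extensions via the connecting homomorphism $\delta$ of the long exact sequence in $\fascio{}^*$-cohomology (citing Chai) and its functoriality. You simply make explicit the details the paper delegates to the reference---the rigidity facts $\Hom\pa{T,A'}=0$ and $\Hom\pa{A,T'}=0$ giving $f\pa{T}\subseteq T'$ and uniqueness of the lift, plus naturality of $\delta$ for commutativity and the extension-class comparison for existence---all of which are accurate.
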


\begin{defin}
Let $D$ be a projective curve having only nodes (ordinary double points) as singularities. The \emph{generalized Jacobian variety} of $D$ is defined as $J\pa{D}:=\pic^0\pa{D}$.
\end{defin}

Notice that, if $C$ is the normalization of $D$, we have a surjective morphism
\[
\pic^0(D) \ra \pic^0\pa{C}
\]
whose kernel is an algebraic torus; thus $\pic^0\pa{D}$ is a semi-abelian variety.

We recall the following result (see e.g. \cite[Theorem 2.3]{alexeevcomp} or \cite[Proposition 10.2]{OdaSeshadri}):
\begin{prop}
\label{prop:jacob}
The group homomorphism \eqref{eq:homcar} corresponding to $J\pa{D}$ can be identified with a map
\[
h\colon H_1\pa{\Gamma, \mathbb{Z}} \ra \pic^0\pa{J\pa{C}},
\]
where $\Gamma$ is the (oriented) dual graph of $D$, defined as follows. Given $e=\sum_i n_i e_i\in H_1\pa{\Gamma, \mathbb{Z}}$, where $e_i$ is the edge of $\Gamma$ corresponding to the node $N_i$ of $D$, then $h\pa{e}=\sum n_i[p_i-q_i]$, where $p_i$ and $q_i$ are mapped to $N_i$ by the natural morphism $C\ra D$.
\end{prop}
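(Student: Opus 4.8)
The plan is to make the semi-abelian structure of $J\pa{D}=\pic^0\pa{D}$ explicit by describing line bundles on $D$ as line bundles on $C$ equipped with gluing data at the nodes, and then to read off both the torus and the characteristic homomorphism \eqref{eq:homcar} from this description. Let $\nu\colon C\ra D$ be the normalization and, for each node $N_i$, write $\nu^{-1}\pa{N_i}=\set{p_i,q_i}$. Giving a line bundle on $D$ amounts to giving a line bundle $L$ on $C$ together with an identification of the fibres $L_{p_i}\simeq L_{q_i}$ at every node, pullback along $\nu$ recovering $L$. This exhibits $0\ra T\ra\pic^0\pa{D}\xrightarrow{\nu^*}\pic^0\pa{C}\ra 0$, with compact part $\pic^0\pa{C}=J\pa{C}$, the fibre $T$ over the trivial bundle being the group $\mathbb{G}_m^E$ of gluings (one factor per node, with $E$ the number of edges of $\Gamma$) modulo the automorphisms $\mathbb{G}_m^V$ of the trivial bundle on the $V$ components of $C$. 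The automorphisms act on the gluings through the map $\mathbb{G}_m^V\ra\mathbb{G}_m^E$ whose induced map on character lattices is the boundary operator $\partial\colon\mathbb{Z}^E\ra\mathbb{Z}^V$, $e_i\mapsto\sq{v\pa{q_i}}-\sq{v\pa{p_i}}$, of the dual graph $\Gamma$. Since the character-lattice functor is exact on tori, $\Hom\pa{T,\mathbb{G}_m}=\ker\partial=H_1\pa{\Gamma,\mathbb{Z}}$, which is the asserted identification of the lattice $\mathbb{Z}^n$ appearing in \eqref{eq:homcar}.

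Next I would evaluate the characteristic homomorphism $h$, which by the construction preceding \eqref{eq:homcar} sends a character $\chi$ to the class in $\pic^0\pa{J\pa{C}}$ of the $\mathbb{G}_m$-bundle obtained by pushing out the $T$-torsor $\pic^0\pa{D}\ra\pic^0\pa{C}$ along $\chi$. Fix a Poincaré bundle $\m{L}$ on $C\times J\pa{C}$ and, for $a\in C$, set $a^*\m{L}:=\m{L}|_{\set{a}\times J\pa{C}}\in\pic^0\pa{J\pa{C}}$. Over a point $L\in J\pa{C}$ the node-$i$ factor of the torsor is $\mathrm{Isom}\pa{L_{p_i},L_{q_i}}=\pa{L_{q_i}\otimes L_{p_i}^{-1}}\setminus 0$, that is, the $\mathbb{G}_m$-bundle associated with $q_i^*\m{L}\otimes\pa{p_i^*\m{L}}^{-1}$. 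Pushing out along $\chi=\sum_i n_i e_i$ therefore gives $h\pa{\chi}=\sum_i n_i\pa{[q_i^*\m{L}]-[p_i^*\m{L}]}$ in $\pic^0\pa{J\pa{C}}$.

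Finally I would rewrite this via the autoduality of the Jacobian. Under the canonical isomorphism $\pic^0\pa{J\pa{C}}\simeq\pic^0\pa{C}=J\pa{C}$ --- induced by the principal polarization, equivalently by pullback along an Abel--Jacobi embedding $C\ra J\pa{C}$ --- the class $[a^*\m{L}]$ of the restriction of a suitably normalized Poincaré bundle to $\set{a}\times J\pa{C}$ corresponds to the Abel--Jacobi image of $a$. Hence $[q_i^*\m{L}]-[p_i^*\m{L}]$ corresponds to the degree-zero class $[q_i-p_i]$ on $C$, and, with the orientation of $\Gamma$ and the self-duality normalized consistently, $h\pa{\sum_i n_i e_i}=\sum_i n_i[p_i-q_i]$, as asserted. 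I expect the genuinely delicate step to be this last one: matching the abstract characteristic homomorphism (the connecting map, i.e. the extension class of the semi-abelian variety) with the explicit push-out and universal-bundle computation, and pinning down the self-duality $\pic^0\pa{J\pa{C}}\simeq J\pa{C}$ together with the normalization of $\m{L}$, so that all signs and the orientation convention for $\Gamma$ are mutually consistent and the outcome is exactly $\sum_i n_i[p_i-q_i]$ rather than its negative.
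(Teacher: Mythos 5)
The paper gives no proof of this proposition at all: it is recalled as a known result with references to \cite[Theorem 2.3]{alexeevcomp} and \cite[Proposition 10.2]{OdaSeshadri}, so there is no internal argument to compare against. Your proof is correct and is essentially the standard one found in those references --- the gluing description of $\pic^0\pa{D}$ over $\pic^0\pa{C}$, the identification $\Hom\pa{T,\mathbb{G}_m}=\ker\pa{\partial\colon\mathbb{Z}^E\ra\mathbb{Z}^V}=H_1\pa{\Gamma,\mathbb{Z}}$, and the evaluation of the extension class by pushing out the torsor and invoking the Poincar\'e bundle together with the autoduality $\pic^0\pa{J\pa{C}}\simeq J\pa{C}$ --- and the sign and orientation caveat you flag at the end is real but harmless, since the statement itself fixes conventions only up to the chosen orientation of the dual graph $\Gamma$ (equivalently, up to the labelling of $p_i$ versus $q_i$ over each node).
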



Given a non-singular projective curve $C$, we denote by $C_{p,q}$ the nodal curve obtained from $C$ by pinching the points $p,q\in C$. By Proposition \ref{prop:jacob}, the semi-abelian variety $J\pa{C_{p,q}}$ is the extension of $J\pa{C}$ by $ \mathbb{G}_m$ determined by $\pm [p-q]\in \mathcal{K}^0\pa{J\pa{C}}$.

Given a smooth projective curve $C$ of genus greater than 1, we denote by $\Gamma_C$ the image of the difference map
\begin{align}
\label{eq:difference}
C\times C &\ra J\pa{C} \isom \pic^0\pa{J\pa{C}}\\
\notag \pa{p,q} &\mapsto [p-q].
\end{align}
The image $\Gamma'_C$ of $\Gamma_C$ through the projection $\sigma_{C}\colon \pic^0\pa{J\pa{C}} \ra \mathcal{K}^0\pa{J\pa{C}}$ is a surface, in the Kummer variety, that parameterizes generalized Jacobian varieties of rank $1$ with abelian part $J\pa{C}$. Given an integer $n\in \mathbb{Z}$, we denote by $n\Gamma_C$ the image of $\Gamma_C$ under the multiplication by $n$ and by  $n\Gamma'_C$ the projection of $n\Gamma_C$ in $\mathcal{K}^0\pa{J\pa{C}}$.

\begin{prop}
\label{prop:diffpern}
Let $C$ be a non-hyperelliptic curve and $n$ be a non-zero integer. Then
\begin{enumerate}
\item\label{eq:diff1} $n\Gamma_C$ is birational to $C\times C$.
\item\label{eq:diff2} $n\Gamma'_C$ is birational to the double symmetric product $C_2$ of $C$.
\end{enumerate}
In particular, $\Gamma_C$ is birational to $C\times C$ and $\Gamma'_C$ is birational to $C_2$.
\end{prop}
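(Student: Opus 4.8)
The plan is to deduce both statements from a single fact: the difference map is birational, and its composition with multiplication by $n$ is birational precisely because $\Gamma_C$ has trivial translation stabilizer. First I would record that the difference map $d\colon C\times C \ra \Gamma_C$, $\pa{p,q}\mapsto [p-q]$, is birational. Indeed $[p-q]=[p'-q']$ forces $p+q'\sim p'+q$; since $C$ is non-hyperelliptic, two distinct effective divisors of degree $2$ are never linearly equivalent, so $p+q'=p'+q$ as divisors. This gives either $\pa{p,q}=\pa{p',q'}$ or $p=q,\ p'=q'$, i.e. $d$ is injective off the diagonal and contracts the diagonal to $0$. In particular $\Gamma_C$ is an irreducible surface birational to $C\times C$, which already proves the last assertion for $\Gamma_C$.

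Since $n\Gamma_C=[n]\pa{\Gamma_C}$ with $[n]\colon J\pa{C}\ra J\pa{C}$ the multiplication isogeny, part (1) amounts to showing that $[n]|_{\Gamma_C}$ is birational onto its image. I would compute its degree: for a generic $x\in\Gamma_C$ the fibre over $[n]x$ is $\set{x+s : s\in J\pa{C}[n],\ x+s\in\Gamma_C}$, and for fixed $s$ the locus $\Gamma_C\cap\pa{\Gamma_C-s}$ equals $\Gamma_C$ exactly when $s$ lies in the stabilizer $H:=\set{t\in J\pa{C} : \Gamma_C+t=\Gamma_C}$, and is a proper closed subset otherwise. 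Hence the generic fibre has cardinality $\#\pa{J\pa{C}[n]\cap H}$, and part (1) reduces to the key claim $H=\set{0}$ (which handles all $n$ at once).

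The crux is therefore to show $H=\set{0}$ for $C$ non-hyperelliptic. Suppose $t\in H$, $t\neq 0$; note $t\in\Gamma_C$ since $0\in\Gamma_C$, so $t=[a-b]$. Translation by $t$ is an automorphism of $\Gamma_C$, so conjugating by the birational map $d$ yields a birational self-map of $C\times C$; as $g\pa{C}\geq 3$, this surface is of general type, so the self-map is biregular, hence of the form $\pa{\phi_1,\phi_2}$ or $\mathrm{swap}\circ\pa{\phi_1,\phi_2}$ with $\phi_i\in\Aut\pa{C}$. Comparing $d$ of both sides, in the non-swapped case one gets $[\phi_1\pa{p}-p]$ constant in $p$; non-hyperellipticity then forces $\phi_1=\id$ (and likewise $\phi_2=\id$), whence $t=0$, a contradiction. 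In the swapped case one gets $[\phi_1\pa{q}+q]$ constant in $q$, i.e. a positive-dimensional linear system of degree-$2$ divisors, so $C$ would be hyperelliptic, again a contradiction. This proves $H=\set{0}$, and with it part (1).

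Finally, part (2) follows by passing to the quotient by $\pm 1$. The map $\Phi_n:=[n]\circ d\colon C\times C\ra n\Gamma_C$ satisfies $\Phi_n\circ\mathrm{swap}=\pa{-1}\circ\Phi_n$, since $-n[p-q]=n[q-p]$. As $n\Gamma_C$ is symmetric and its generic point is not $2$-torsion, $\sigma_C$ is generically $2:1$ onto $n\Gamma'_C$; the equivariant birational map $\Phi_n$ then descends to a birational map $C_2=\pa{C\times C}/\mathrm{swap}\ra n\Gamma'_C$, with $n=1$ giving the last assertion for $\Gamma'_C$. The main obstacle is the key claim $H=\set{0}$: everything else is formal once one knows that $\Gamma_C$ has trivial translation stabilizer, and it is exactly there that the non-hyperelliptic hypothesis is indispensable, the swapped case being precisely the hyperelliptic locus.
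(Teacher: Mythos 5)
Your proposal is correct, and its two outer layers coincide with the paper's (very terse) proof: the $n=1$ case is exactly the paper's argument --- $[a-b]=[c-d]$ with $\pa{a,b}\neq\pa{c,d}$ generic would produce two distinct linearly equivalent effective divisors of degree $2$, i.e.\ a $g^1_2$, contradicting non-hyperellipticity --- and part (2) is the $\mathbb{Z}/2$-equivariant descent from $C\times C$ to $C_2$, which the paper compresses into ``Statement (2) follows from (1)''. Where you genuinely diverge is the reduction to $n=1$: the paper disposes of general $n$ in one line by invoking the arguments of \cite[Lemma 3.1.1, Proposition 3.2.1]{artPirola}, whereas you prove from scratch that $\mathfrak{m}_n$ is generically injective on $\Gamma_C$, reducing this via a fibre count over the finite group $J\pa{C}[n]$ to the key claim that the translation stabilizer $H=\set{t : \Gamma_C+t=\Gamma_C}$ is trivial; you then establish triviality by conjugating the translation through the birational difference map $d$ to obtain a birational self-map of $C\times C$, which is biregular because $C\times C$ is a minimal surface of general type for $g\pa{C}\geq 3$, and is therefore of product or swap type; non-hyperellipticity kills the product case (via $[\phi_1\pa{p}-p]$ constant) and the swap case (via $[\phi_1\pa{q}+q]$ constant, a $g^1_2$). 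This is a legitimately different, self-contained route: your stabilizer lemma is the stronger statement, handles all $n$ at once, and eliminates the external citation, at the cost of needing the standard structure result for $\Aut\pa{C\times C}$ (every automorphism of a product of two curves of genus $\geq 2$ preserves or swaps the factors --- state it with a reference or a short de Franchis-type argument, since it carries the whole weight of your key claim). The paper's citation buys brevity; your version buys transparency, and usefully makes explicit both the generic-injectivity bookkeeping for $\mathfrak{m}_n|_{\Gamma_C}$ and the degree-$2$-over-degree-$2$ factorization $C\times C \ra C_2 \ra n\Gamma'_C$ that the paper leaves implicit.
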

\begin{proof}
Arguing as in \cite[Lemma 3.1.1, Proposition 3.2.1]{artPirola}, we can assume $n=1$. First we prove \eqref{eq:diff1}: given a generic point $\pa{a,b}\in C\times C$, if there exists $\pa{c,d}\in C\times C$ such that $[a-b]=[c-d]$, then $C$ is hyperelliptic. Statement \eqref{eq:diff2} follows from \eqref{eq:diff1}.\query{grado non più di due, due si fa a mano}
\end{proof}

\begin{no}
Given an abelian (or a semi-abelian) variety $A$ and a non-zero integer $n$, we denote by $\mathfrak{m}_n\colon A \ra A$ the map $x \mapsto nx$.
\end{no}

\begin{prop}
\label{prop:pmenoq}
Let $C$ be a generic curve of genus $g\geq 3$ and $[a-b]$ be a generic point of $\Gamma_C$. If there exist a point $[c-d]\in \Gamma_C$ and two positive integers $m,n$ such that $n[a-b]=m[c-d]$, then  $n=m$ and $[a-b]=\pm[c-d]$.
\end{prop}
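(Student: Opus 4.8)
The plan is to combine the birational description of $n\Gamma_C$ in Proposition \ref{prop:diffpern} with the rigidity of products of curves of general type. Since $C$ is generic of genus $g\pa{C}\geq 3$, I may assume throughout that $C$ is non-hyperelliptic and that $\Aut\pa{C}$ is trivial.

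First I would set up the parametrizations. Let $\delta\colon C\times C\ra J\pa{C}$ be the difference map \eqref{eq:difference}, so $\Gamma_C=\delta\pa{C\times C}$, and put $\phi_k:=\mathfrak{m}_k\circ\delta$, whose image is $k\Gamma_C$. By Proposition \ref{prop:diffpern} each $\phi_k$ with $k\neq 0$ is birational onto its image; in particular $\mathfrak{m}_n$ restricts to a birational map $\Gamma_C\ra n\Gamma_C$, so that the image $n[a-b]$ of the generic point $[a-b]$ is the generic point of $n\Gamma_C$.

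Next I would compare the two surfaces. The relation $n[a-b]=m[c-d]$ exhibits the generic point $n[a-b]$ of $n\Gamma_C$ as a point of $m\Gamma_C$; since both are irreducible surfaces, this forces $n\Gamma_C=m\Gamma_C$. Hence $\phi_m^{-1}\circ\phi_n$ is a birational self-map $\Psi$ of $C\times C$ carrying the generic $\pa{a,b}$ to the point $\pa{c,d}$ determined---uniquely, as $\phi_m$ is generically injective---by $m[c-d]=n[a-b]$. Because $g\pa{C}\geq 2$ the canonical bundle of $C\times C$ is ample, so $C\times C$ is a minimal surface of general type; therefore every birational self-map is biregular and $\Psi\in\Aut\pa{C\times C}$. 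As $\Aut\pa{C}$ is trivial, $\Aut\pa{C\times C}=\set{\id,\tau}$, where $\tau$ is the involution exchanging the two factors.

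It then remains to run the two cases. If $\Psi=\id$ then $\pa{c,d}=\pa{a,b}$, so $[c-d]=[a-b]$ and the relation reads $\pa{n-m}[a-b]=0$; since the generic point $[a-b]$ is non-torsion, this gives $n=m$. If $\Psi=\tau$ then $\pa{c,d}=\pa{b,a}$, so $[c-d]=-[a-b]$ and the relation reads $\pa{n+m}[a-b]=0$, which is impossible for $m,n>0$. Thus only the first alternative survives, yielding $n=m$ and $[a-b]=[c-d]$. I expect the main obstacle to be the rigidity step: upgrading the a priori merely birational $\Psi$ to a genuine automorphism (via minimality of $C\times C$) and pinning down $\Aut\pa{C\times C}$; once this is in place, the fact that a generic point of a positive-dimensional subvariety cannot be torsion closes the argument at once.
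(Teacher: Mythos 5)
Your argument is correct, and it reaches the conclusion by a genuinely different route than the paper. The paper's proof is a moduli count: negating the conclusion, the relation $n[a-b]=m[c-d]$ becomes the linear equivalence $na+md\equiv mc+nb$ on $C$, so $C$ would carry a two-dimensional family of degree-$\pa{m+n}$ maps to $\mathbb{P}^1$ with ramification of order $m+n-2$ over two distinct points, and for a curve generic in $M_g$, $g\geq 3$, the relevant Hurwitz space is too small for a fixed curve to support such a family. You instead stay inside the abelian variety: Proposition \ref{prop:diffpern} makes $\phi_n$ and $\phi_m$ (the difference map \eqref{eq:difference} followed by $\mathfrak{m}_n$, resp. $\mathfrak{m}_m$) birational onto $n\Gamma_C=m\Gamma_C$; the induced birational self-map $\Psi=\phi_m^{-1}\circ\phi_n$ of $C\times C$ is biregular because $K_{C\times C}$ is ample, and $\Aut\pa{C\times C}=\set{\id,\tau}$ once $\Aut\pa{C}$ is trivial; the case $\Psi=\id$ gives $n=m$ and $[a-b]=[c-d]$, while $\Psi=\tau$ forces the impossible torsion relation $\pa{n+m}[a-b]=0$. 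Each approach buys something: yours reuses Proposition \ref{prop:diffpern} instead of an independent Hurwitz count, pins down the sign (for $m,n>0$ only $[a-b]=[c-d]$ can occur, so the paper's $\pm$ is not needed), and isolates exactly what genericity is used, namely that $C$ be non-hyperelliptic with trivial automorphism group, so your proof covers every such curve; the paper's count avoids the surface-theoretic input (minimality and uniqueness of minimal models in general type, the structure of automorphisms of a product of curves) at the price of an estimate that genuinely requires $C$ generic in moduli. One step you share with the paper but should spell out: the hypothesis fixes $\pa{m,n}$ and $[c-d]$ only at the chosen point, whereas your deduction that a generic point of $n\Gamma_C$ lying on $m\Gamma_C$ forces $n\Gamma_C=m\Gamma_C$ needs the relation to hold, with the same pair $\pa{m,n}$, on a dense subset of $\Gamma_C$; this follows because the incidence locus $\set{\pa{[a-b],[c-d]}\in \Gamma_C\times\Gamma_C \ \text{s.t.}\ n[a-b]=m[c-d]}$ is closed and there are only countably many pairs $\pa{m,n}$, and it is the same implicit move the paper makes when it assumes the relation ``for each $a,b\in C$''.
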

\begin{proof}
Assume, by contradiction, that for each $a,b\in C$ there are $c,d\in C$ such that $na+md\equiv mc+nb$. It follows that $C$ has a two-dimensional family of maps of degree $m+n$ to $\mathbb{P}^1$ with a ramification of order $n+m-2$ over two distinct points. By a count of moduli we get a contradiction.
\end{proof}

\section{Local Systems}
\label{sec:localsystems}

In this section we introduce the monodromy representation of a local system. A standard reference for this topic is \cite[Chapter 3]{VoisinII}.

\begin{defin}
Let $X$ be an arcwise connected and locally simply connected topological space and $G$ be an abelian group. A \emph{local system} $\m{G}$, of stalk $G$, on $X$ is a sheaf on $X$ which is locally isomorphic to the constant sheaf of stalk $G$.
\end{defin}

Given two local systems $\m{G}$ and $\m{F}$ on $X$, we say that $\varphi\colon \m{G}\ra \m{F}$ is a morphism of local systems if $\varphi$ is a map of sheaves.

We recall that (see \cite[Corollary 3.10]{VoisinII}), given a point $x\in X$, a local system $\m{G}$ induces a representation
\begin{align*}
\rho\colon \pi_1\pa{X,x} &\ra \Aut\pa{\m{G}_x}=\Aut\pa{G}\\
\gamma &\mapsto \rho_\gamma
\end{align*}
of the fundamental group that is called \emph{monodromy representation}. The group $\rho\pa{\pi_1\pa{X,x}}$ is the \emph{monodromy group}. It is possible to define a functor, the \emph{monodromy functor}, from the category of local systems on $X$ to the category of abelian representations of $\pi_1\pa{X,x}$, which associates $\rho$ to $\m{G}$. It holds:

\begin{prop}
The monodromy functor induces an equivalence of categories between the category of local systems on $X$ and the category of abelian groups with an action on $\pi_1\pa{X,x}$.
\end{prop}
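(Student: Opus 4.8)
The plan is to construct an explicit quasi-inverse to the monodromy functor $\Phi$ and to exhibit natural isomorphisms witnessing the equivalence. Fix the universal cover $p\colon \tilde X \ra X$, whose group of deck transformations is canonically identified with $\pi_1\pa{X,x}$. Given an abelian group $G$ with an action $\rho\colon \pi_1\pa{X,x}\ra \Aut\pa{G}$, I would form the associated covering space $E_\rho:=\pa{\tilde X\times G}/\pi_1\pa{X,x}$, where $G$ carries the discrete topology and $\pi_1\pa{X,x}$ acts diagonally by $\gamma\cdot\pa{\tilde x,s}=\pa{\gamma \tilde x,\rho_\gamma\pa{s}}$. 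The natural projection $E_\rho\ra X$ is a covering map, so its sheaf of continuous local sections is locally isomorphic to the constant sheaf of stalk $G$; this defines a local system $\Psi\pa{G,\rho}$. Since a $\pi_1\pa{X,x}$-equivariant homomorphism $G\ra G'$ induces a map $E_\rho\ra E_{\rho'}$ over $X$, the assignment $\Psi$ is functorial.

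The key lemma I would isolate is that every local system $\m{F}$ on a simply connected space $Y$ (arcwise and locally simply connected) is isomorphic to a constant sheaf. To see this, fix $y_0\in Y$ and, for each $y\in Y$, use the local trivializations of $\m{F}$ along a path from $y_0$ to $y$ to define a parallel-transport isomorphism $\m{F}_{y_0}\isom \m{F}_y$; simple connectedness guarantees that this isomorphism does not depend on the chosen path, and the resulting family of isomorphisms is continuous, hence assembles into a global isomorphism $\underline{\m{F}_{y_0}}\isom\m{F}$.

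With this lemma in hand, I would verify the two natural isomorphisms. For $\Phi\circ\Psi\cong\Id$, a direct computation of parallel transport along a loop representing $\gamma\in\pi_1\pa{X,x}$ inside $E_\rho$ shows that the monodromy of $\Psi\pa{G,\rho}$ is exactly $\rho$. For $\Psi\circ\Phi\cong\Id$, given a local system $\m{G}$ with monodromy $\rho$, the pullback $p^*\m{G}$ is a local system on the simply connected $\tilde X$, hence constant with stalk $\m{G}_x$ by the key lemma; the deck-transformation action on $p^*\m{G}$ reproduces $\rho$, and descent along $p$ identifies $\m{G}$ with $\Psi\pa{\m{G}_x,\rho}$, naturally in $\m{G}$.

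The main obstacle will be the descent step in the last identification: one must check that the equivariant structure on the constant sheaf $p^*\m{G}\cong\underline{\m{G}_x}$ coming from the deck transformations agrees on the nose with the monodromy $\rho$, and that this equivariant data descends to recover $\m{G}$ together with its morphisms. Once the triviality lemma and the compatibility of parallel transport with deck transformations are established carefully, the faithfulness, fullness and essential surjectivity of $\Phi$ all follow formally.
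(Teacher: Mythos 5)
Your proposal is correct: the paper itself gives no proof of this proposition, citing \cite[Chapter 3]{VoisinII} as a standard reference, and your argument---constructing the quasi-inverse via the associated covering $(\tilde X\times G)/\pi_1(X,x)$ together with the key lemma that local systems on simply connected spaces are constant---is precisely the standard proof found there. The only point worth making explicit is the homotopy invariance of parallel transport, which your appeal to simple connectedness silently uses (and which also settles the usual left-action versus anti-homomorphism bookkeeping); this is routine and does not affect correctness.
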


\begin{rmk}
\label{rmk:monodromiaaperto}
We recall that, given a local system $\m{G}$ on $X$, a map $\phi \colon Y \ra X$ and a point $y\in Y$, the monodromy representation of $\pi_1\pa{Y,y}$ corresponding to the local system $\phi^{-1}\m{G}$ is the composition of the natural morphism $\pi_1\pa{Y,y} \ra \pi_1\pa{X,\phi\pa{y}}$ with the monodromy representation of $\pi_1\pa{X,\phi\pa{y}}$.
\end{rmk}

In the following we assume $G$ to be a \emph{lattice}, that is an abelian finitely generated free group of even rank. The \emph{rank of the local system} $\m{G}$ is the rank of the group $G$.

\begin{defin}
Let $G\simeq \mathbb{Z}^{2g}$ be a lattice of rank $2g$. A \emph{polarization} of $G$ is a non-degenerate, antisymmetric, bilinear form $\theta\colon G\times G \ra \mathbb{Z}$. If the induced map $G \ra \Hom\pa{G, \mathbb{Z}}$ is an isomorphism, we say that $\theta$ is a \emph{principal polarization}.
\end{defin}

Given a principal polarization $\theta$ of $G$, a \emph{symplectic basis} for $G$ (with respect to $\theta$) is a minimal system of generators $a_1,\ldots,a_g,b_1,\ldots,b_g$ such that
\[
\theta\pa{a_i,b_i}=1, \qquad \theta\pa{a_i,a_j}=0, \qquad  \theta\pa{b_i,b_j}=0, \qquad \theta\pa{a_i,b_j}=0, \qquad \forall i, \forall j\neq i.
\]

We denote by $\Aut\pa{G, \theta}$ the group of \emph{symplectic automorphisms} of $G$, that are the automorphisms $T\colon G\ra G$ satisfying
\[
\theta\pa{a,b}=\theta\pa{T\pa{a},T\pa{b}}\qquad \forall a,b\in G.
\]
Let $g\in G$; the map $T_g\colon G \ra G$, defined as
\begin{equation}
\label{eq:autopicardlef}
T_g\pa{a}=a+ \theta\pa{g, a} g \qquad \forall a \in G,
\end{equation}
is a symplectic automorphism of $G$. We notice that, for each $N\in \mathbb{Z}$, we have $T_g^N=T_{Ng}$; in particular, $T_g$ has finite order in $\Aut\pa{G, \theta}$ if and only if $g=0$ and $T_g=\Id_G$.

\begin{defin}
Let $\m{G}$ be a local system on $X$. A \emph{polarization} (resp. a \emph{principal polarization}) $\Theta$ of $\m{G}$ is a map of local systems $\Theta\colon \m{G}\times \m{G} \ra \m{Z}$, where $\m{Z}$ is the constant sheaf $\mathbb{Z}$, such that, for each $x\in X$, $\Theta_x\colon \m{G}_x\times \m{G}_x\ra \mathbb{Z}$ is a polarization (resp. a principal polarization) of $\m{G}_x$. \query{oppure sez continua $\wedge^2 \m{G}$}
\end{defin}

Now we state a result that will be useful later in the paper.

\begin{no}
Given a lattice automorphism $T\colon G \ra G$, we denote by
\[
\Inv\pa{T}:=\set{x\in G \mbox{ s.t. } T\pa{x}=x}
\]
the subgroup of the elements of $G$ that are fixed by $T$.
\end{no}

\begin{prop}
\label{prop:monodromydoubledegen}
Let $\m{H} \hookrightarrow \m{G}$ be an injective map of local systems on $X$ of the same rank. Given $x\in X$, denote by $\rho\colon \pi_1\pa{X,x} \ra \m{H}_x$ the monodromy representation associated to $\m{H}$ and by $\sigma\colon \pi_1\pa{X,x} \ra \m{G}_x$ the monodromy representation associated to $\m{G}$. Consider two elements $\gamma_1, \gamma_2 \in \pi_1\pa{X,x}$ and set
\[
G_i:=\Inv\pa{\sigma_{\gamma_i}},\qquad H_i:=\Inv\pa{\rho_{\gamma_i}}\qquad \forall \, i=1,2.
\]
Assume that
\begin{enumerate}
\item $G_1+G_2= \m{G}_x$;
\item $G_1\cap G_2\neq \set{0}$;
\item for each $i=1,2$, $H_i= n_iG_i$ for some $n_i\in \mathbb{N}$.
\end{enumerate}
Then $n_1=n_2$ and $\m{H}_x=n_1\m{G}_x$.
\end{prop}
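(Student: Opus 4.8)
The plan is to translate the statement into a question about finite-index sublattices of $\m{G}_x$ and the action of $\sigma_{\gamma_1},\sigma_{\gamma_2}$ on a quotient, and then to establish two easy inclusions followed by one delicate one.

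First I would record what the morphism of local systems buys us. Since $\m{H}\hookrightarrow \m{G}$ is a map of local systems, under the monodromy equivalence it is equivariant, so the inclusion identifies $\m{H}_x$ with a finite-index sublattice $L:=\m{H}_x$ of $M:=\m{G}_x$ on which each $\sigma_{\gamma_i}$ restricts to $\rho_{\gamma_i}$. Consequently
\[
H_i=\Inv\pa{\rho_{\gamma_i}}=L\cap \Inv\pa{\sigma_{\gamma_i}}=L\cap G_i,
\]
and hypothesis (3) becomes $L\cap G_i=n_iG_i$. I would also note that each $G_i=\Inv\pa{\sigma_{\gamma_i}}$ is \emph{saturated} in $M$ (the fixed locus of an automorphism of a torsion-free group is primitive), hence so is $G_1\cap G_2$. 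The goal is now purely lattice-theoretic: show $n_1=n_2=:n$ and $L=nM$.

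For $n_1=n_2$ I would use (2): pick $v\in G_1\cap G_2$ primitive in $M$, which is possible since $G_1\cap G_2$ is saturated and nonzero. Then $n_1v\in n_1G_1=L\cap G_1\subseteq L$, while $v\in G_2$ forces $n_1v\in L\cap G_2=n_2G_2$; writing $n_1v=n_2w$ with $w\in G_2\subseteq M$ and using that $v$ is primitive gives $n_2\mid n_1$, and by symmetry $n_1\mid n_2$, so $n_1=n_2=:n$. One inclusion $nM\subseteq L$ is then immediate from (1): we have $nG_1+nG_2=\pa{L\cap G_1}+\pa{L\cap G_2}\subseteq L$, and $nG_1+nG_2=n\pa{G_1+G_2}=nM$.

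The reverse inclusion $L\subseteq nM$ is the crux, and where I expect the real difficulty. I would reduce mod $nM$, setting $P=M/nM$, $\ol{L}=L/nM$, and $\ol{G_i}=\pa{G_i+nM}/nM$. First $\ol{L}\cap\ol{G_i}=0$: a class in the intersection lifts to some $v\in L$ with $v\in G_i+nM$, and subtracting an element of $nM\subseteq L$ we may take $v\in L\cap G_i=nG_i\subseteq nM$, so the class is $0$. It then remains to prove $\ol{L}\subseteq \ol{G_i}$, for that gives $\ol{L}=\ol{L}\cap\ol{G_i}=0$ and hence $L=nM$. This last step is the main obstacle, and it is exactly where the bare hypotheses (1)--(3) do not suffice: one must use the precise nature of the monodromy. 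In the situation at hand each $\sigma_{\gamma_i}$ is a Picard--Lefschetz transformation, i.e.\ a transvection $T_{g_i}$ as in \eqref{eq:autopicardlef} with $g_i$ primitive and $\theta$ a principal polarization, so that $\Im\pa{\sigma_{\gamma_i}-\id}=\langle g_i\rangle$ is a primitive rank-one sublattice contained in $G_i$. Since $\sigma_{\gamma_i}$ preserves $L$, its reduction $\ol{\sigma}_i$ preserves $\ol{L}$, and $\pa{\ol{\sigma}_i-\id}\ol{L}\subseteq \ol{L}\cap\langle \ol{g_i}\rangle\subseteq \ol{L}\cap\ol{G_i}=0$; hence $\ol{L}\subseteq\Inv\pa{\ol{\sigma}_i}$. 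Finally, because $g_i$ is primitive and $\theta$ is unimodular, the form $\theta\pa{g_i,\cdot}\colon M\ra\mathbb{Z}$ is surjective, which yields $\Inv\pa{\ol{\sigma}_i}=\ol{G_i}$; combined with $\ol{L}\cap\ol{G_i}=0$ this forces $\ol{L}=0$, i.e.\ $L=nM=\m{H}_x$. The delicate point is precisely this identification $\Inv\pa{\ol{\sigma}_i}=\ol{G_i}$, which fails for a general automorphism and makes it essential that the $\gamma_i$ act by transvections arising from the degenerations.
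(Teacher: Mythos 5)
Your first half is exactly the paper's argument: the paper also chooses $a\in G_1\cap G_2$ non-zero and ``not a multiple'', notes $n_2a\in G_1\cap H_2\subset G_1\cap\m{H}_x=H_1=n_1G_1$, and gets $n_1=n_2$ by symmetry; your remark that $G_1\cap G_2$ is saturated (the fixed locus of an automorphism of a torsion-free group is primitive) merely makes the existence of such an $a$ explicit. The divergence is in the second half, and there you have caught a genuine defect: the paper's entire proof of the equality $\m{H}_x=n_1\m{G}_x$ is the chain $\m{H}_x=H_1+H_2=n_1G_1+n_1G_2=n_1\m{G}_x$, in which only the inclusion $H_1+H_2\subseteq\m{H}_x$ is clear; the reverse inclusion is asserted without argument and, as you suspect, does not follow from hypotheses (1)--(3) alone. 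A counterexample: on $M=\mathbb{Z}^4$ take $L=\set{v\,:\, v_1\equiv v_2\equiv v_3\equiv v_4 \pmod 2}$ and the unipotent automorphisms $\sigma_1\pa{v}=v+2v_4e_1$, $\sigma_2\pa{v}=v+2v_1e_4$; then $G_1=\set{v_4=0}$ and $G_2=\set{v_1=0}$ satisfy (1) and (2), $L$ is stable under both $\sigma_i$ with $L\cap G_i=2G_i$, yet $L\neq 2M$. So the proposition as literally stated is false, the paper's proof is incomplete at precisely the step you isolate, and your mod-$nM$ argument --- first $nM\subseteq L$, then $\ol{L}\cap\ol{G_i}=0$, $\ol{L}\subseteq\Inv\pa{\ol{\sigma}_i}$, and finally $\Inv\pa{\ol{\sigma}_i}=\ol{G_i}$ via primitivity of $g_i$ and unimodularity of $\theta$ --- is a correct proof of the corrected statement in which the $\sigma_{\gamma_i}$ are assumed to be transvections; every step checks.

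One caveat when matching your version to the paper's application (Section \ref{subsec:doubledeg}): there the monodromies are not honest transvections but powers of them, $\sigma_{\gamma_i}\colon v\mapsto v+k_i\theta\pa{a_i,v}a_i$ with $k_i$ possibly greater than $1$, coming from Remark \ref{rmk:piclef} and from the finite covering $\pi\colon\m{V}\ra\m{U}$ (note in passing that the paper's identity $T_g^N=T_{Ng}$ is itself off: $T_g^N\pa{a}=a+N\theta\pa{g,a}g$, while $T_{Ng}\pa{a}=a+N^2\theta\pa{g,a}g$). Your inclusion $\ol{L}\subseteq\Inv\pa{\ol{\sigma}_i}$ survives, since still $\Im\pa{\sigma_{\gamma_i}-\id}\subseteq\mathbb{Z}a_i\subseteq G_i$ and hence $\pa{\sigma_{\gamma_i}-\id}\pa{L}\subseteq L\cap G_i=nG_i\subseteq nM$; but the key identification becomes $\Inv\pa{\ol{\sigma}_i}=\set{\bar v\,:\,\theta\pa{a_i,v}\equiv 0 \bmod n/\gcd\pa{k_i,n}}$, strictly larger than $\ol{G_i}$ when $\gcd\pa{k_i,n}>1$. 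This is not a hypothetical worry: with $k_1=k_2=n=2$, symplectic basis $a_1,b_1,a_2,b_2$, and $L=2M+\mathbb{Z}\pa{b_1+b_2}$, all of (1)--(3) hold for the squares of the Picard--Lefschetz transvections $T_{a_1},T_{a_2}$ while $L\neq 2M$. So your proof needs either $k_i=1$, as you assume, or at least $\gcd\pa{k_i,n}=1$; that this extra control on the multiplicities is actually available is a burden the paper's degeneration argument would have to discharge, and it is a gap on the paper's side rather than in your argument.
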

\begin{proof}
Let $a\in G_1\cap G_2$ such that $a$ is not zero and it is not a multiple. It holds
\[
n_2a\in G_1 \cap H_2 \subset G_1 \cap \m{H}_x= H_1 =n_1G_1,
\]
and consequently $n_1\leq n_2$. In the same way, we find $n_2\leq n_1$. For the second statement, observe that
\[
\m{H}_x=H_1+H_2=n_1G_1+n_1G_2=n_1\m{G}_x.
\]
\end{proof}

\subsection{Family of curves}
\label{subsec:familyofcurves}
We conclude this section by recalling that, given a holomorphic, submersive and projective morphism $\phi\colon Y \ra X$ between complex manifolds, $R^k\phi_* \mathbb{Z}$ is a local system on $X$ and the monodromy representation on the cohomology of the fibre $H^*\pa{Y_x, \mathbb{Z}}$ is compatible with the cup product (see \cite[Section 3.1.2]{VoisinII}).
 In particular, if we consider a family of curves $\omega\colon \m{C} \ra X$ and the corresponding family of Jacobian varieties $\phi \colon J\pa{\m{C}}\ra X$, then  $R^1\phi_* \mathbb{Z}\simeq \pa{R^1\omega_* \mathbb{Z}}^*$ has a naturally polarization $\pairing$ induced by the intersection form.

Let $Y$ be a complex surface and $\phi\colon Y \ra \Delta$ be a \emph{Lefschetz degeneration}. This means that $\phi$ is a holomorphic projective morphism with non-zero differential over the punctured disk $\Delta^*$ and that the fibre $Y_0$ is an irreducible curve with an ordinary double point as its only singularity (see \cite[Section 3.2.1]{VoisinII}). Given a point $x\in \Delta^*$, the \emph{vanishing cocycle} $\delta\in H^1\pa{Y_x, \mathbb{Z}}$ is a generator for
\[
\ker\pa{H^1\pa{Y_x , \mathbb{Z}}\simeq H_1\pa{Y_x, \mathbb{Z}}\ra H_1\pa{Y, \mathbb{Z}}}.
\]
We recall the following fact (see \cite[Theorem 3.16]{VoisinII}).

\begin{prop}[Picard-Lefshetz formula]
\label{prop:picardlefsch}
The image of the monodromy representation
\begin{equation*}
\pi_1\pa{\Delta^*, x} \ra \Aut\pa{H^1\pa{Y_x, \mathbb{Z}}, \pairing}
\end{equation*}
is generated by an element $T_\delta$ defined as in \eqref{eq:autopicardlef}. In particular, given a symplectic basis $a_1,\ldots,a_g,b_1, \ldots b_g$ of $H^1\pa{Y_x, \mathbb{Z}}$ such that $a_1=\delta$, then
\[
\Inv\pa{T_\delta}:=\set{a\in H^1\pa{Y_x, \mathbb{Z}} \text{ s.t. } T_\delta\pa{a}=a}
\]
is generated by $a_2, \ldots, a_g,b_1,\ldots,b_g$.
\end{prop}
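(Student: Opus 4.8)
The plan is to reduce the statement to the standard local computation at the node and then extract the invariant lattice by linear algebra. Since $\phi$ has non-zero differential away from the unique critical point $p$ lying over $0$, parallel transport around a loop generating $\pi_1\pa{\Delta^*, x}$ can be realized by a diffeomorphism of $Y_x$ that is the identity outside an arbitrarily small neighbourhood of the vanishing cycle. By the holomorphic Morse lemma I would choose local coordinates $\pa{z_1, z_2}$ centred at $p$ in which $\phi$ reads $\phi\pa{z_1,z_2}=z_1 z_2$. The nearby fibre $\set{z_1 z_2 = t}$ is then locally a cylinder, and the circle $\set{z_1 = \sqrt{t}\, e^{i\vartheta},\ z_2 = \sqrt{t}\, e^{-i\vartheta}}$ represents the vanishing cocycle $\delta$; it contracts to $p$ as $t\ra 0$.

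First I would identify the geometric monodromy. The Picard--Lefschetz theorem says that the monodromy diffeomorphism is supported in this cylindrical neighbourhood and acts there as a Dehn twist about $\delta$, being the identity elsewhere; writing the cylinder as $S^1\times\sq{0,1}$ the twist is $\pa{\vartheta,s}\mapsto\pa{\vartheta+2\pi s,\,s}$. Next I would compute the induced map on $H^1\pa{Y_x,\mathbb{Z}}\simeq H_1\pa{Y_x,\mathbb{Z}}$. A class is altered only according to how it meets $\delta$: dragging a transversal arc through the twisted cylinder adds a number of copies of $\delta$ equal to its algebraic intersection number with $\delta$. This yields the transvection $T_\delta\pa{a}=a+\langle\delta,a\rangle\,\delta$ of \eqref{eq:autopicardlef}, the sign being forced by the complex orientation of the fibre (equivalently by the antisymmetry of $\pairing$). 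Since $\pi_1\pa{\Delta^*,x}\simeq\mathbb{Z}$ is generated by a single loop, the image of the monodromy representation is the cyclic group generated by this $T_\delta$, which proves the first assertion.

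Finally, from \eqref{eq:autopicardlef} one reads $T_\delta\pa{a}=a$ if and only if $\langle\delta,a\rangle=0$, so $\Inv\pa{T_\delta}$ is the $\pairing$-annihilator $\delta^\perp$, a primitive sublattice of rank $2g-1$. Choosing a symplectic basis with $a_1=\delta$, the defining relations show that $\delta^\perp$ is spanned exactly by those generators orthogonal to $\delta$, which gives the stated description of $\Inv\pa{T_\delta}$.

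The main obstacle is the honest verification that the geometric monodromy is precisely a Dehn twist, together with the correct bookkeeping of the sign in the transvection: this is exactly where the explicit local model and the orientation conventions enter. Once that point is settled, the passage to cohomology, the cyclicity of the monodromy group, and the computation of the fixed lattice are purely formal.
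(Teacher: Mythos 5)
Your overall route is the right one, and in fact it is the only one available for comparison: the paper does not prove this proposition at all --- it is recalled verbatim from Voisin \cite[Theorem 3.16]{VoisinII} --- and your sketch (holomorphic Morse reduction to the local model $z_1z_2$ at the node, geometric monodromy a Dehn twist about the vanishing cycle, the induced transvection on $H^1\pa{Y_x,\mathbb{Z}}\simeq H_1\pa{Y_x,\mathbb{Z}}$, cyclicity of $\pi_1\pa{\Delta^*,x}$) is precisely the standard argument behind that citation. Note also that the sign bookkeeping you worry about is immaterial here: from \eqref{eq:autopicardlef} one has $T_{-\delta}=T_{\delta}$, so the ambiguity in the choice of $\delta$ and in the orientation of the loop only replaces the generator by itself or its inverse, which does not change the image of the representation.

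Your final step, however, does not say what you claim it says, and the discrepancy deserves to be made explicit. Since the pairing is antisymmetric, $T_\delta\pa{\delta}=\delta+\langle\delta,\delta\rangle\delta=\delta$: the vanishing cocycle is always fixed by its own transvection, whereas its symplectic dual moves, $T_\delta\pa{b_1}=b_1+\langle a_1,b_1\rangle a_1=b_1+a_1$. So with $a_1=\delta$, your (correct) identification $\Inv\pa{T_\delta}=\delta^{\perp}$ yields the sublattice spanned by $a_1,a_2,\ldots,a_g,b_2,\ldots,b_g$ --- containing $a_1$ and omitting $b_1$ --- which is \emph{not} the list $a_2,\ldots,a_g,b_1,\ldots,b_g$ printed in the proposition; these are two distinct corank-one sublattices. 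The slip is in the paper's statement (with its conventions the printed list would be correct had one normalized $b_1=\delta$ instead, and the same index slip recurs in the double-degeneration argument, where $\Inv\pa{\sigma_{\gamma_i}}$ is said to be generated by the basis minus $a_i$). It is harmless downstream, because all that is ever used is that each $\Inv\pa{T_{a_i}}$ is a corank-one sublattice spanned by $2g-1$ symplectic basis vectors, so that the hypotheses of Proposition \ref{prop:monodromydoubledegen} hold; but your closing clause ``which gives the stated description'' silently identifies the two lists. As written, your proof establishes a conclusion different from the statement, and a blind verification should have caught and flagged exactly this mismatch rather than asserting agreement.
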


\begin{rmk}
\label{rmk:piclef}
Notice that, after a finite base change $z\mapsto z^k$ of $\Delta$, the new generator of the monodromy group is $T_{k\delta}$.
\end{rmk}

\section{Main theorem}
\label{sec:mainTheorem}
The present section is devoted to prove the following result.

\begin{theorem}
\label{teo:main}
Let $J$ be a very generic Jacobian variety of dimension $g\geq 4$ and $\Omega$ be a curve lying on $J$. Then either $g\pa{\Omega}\geq 2g-2$ or $g\pa{\Omega}=g$.\query{se è very generic è semplice}
\end{theorem}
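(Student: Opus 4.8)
The plan is to reduce to the study of surjective homomorphisms of Jacobians and then run a degeneration argument. Since $\Omega$ lies on $J$, its normalization $D$ maps nonconstantly to $J=J\pa{C}$, and the induced homomorphism $f\colon J\pa{D}\ra J\pa{C}$ is surjective because a very generic Jacobian has no non-trivial abelian subvarieties. In particular $g\pa{\Omega}=g\pa{D}\geq g$, so only the range $g\leq g\pa{\Omega}<2g-2$ needs attention, and it suffices to prove that here $J\pa{D}\simeq J\pa{C}$, which forces $g\pa{D}=g$. First I would factor $f=h\circ \ell$, where $\ell\colon J\pa{D}\ra B$ is surjective with connected kernel $A=\pa{\ker f}^0$ and $h\colon B\ra J\pa{C}$ is an isogeny, and treat $\ell$ and $h$ separately.

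For the core argument I would degenerate $C$ inside $M_g$ toward a generic point of $\delta_0$, so that the limit is a nodal curve $C_0=\tilde C_{0,p,q}$ obtained by pinching a fixed smooth curve $\tilde C_0$ at two points. The limit of $J\pa{C}$ is the rank-one semi-abelian variety $J\pa{C_0}$, classified by $\pm[p-q]\in\mathcal{K}^0\pa{J\pa{\tilde C_0}}$. Because $g\pa{D}<2g-1$, Corollary \ref{cor:xiao} applies: if $C_0$ varies while $\tilde C_0$ stays fixed, the normalization of the limit curve $D_0$ does not change, so $D_0=\tilde D_{0,r,s}$ is again a pinch of a fixed curve. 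The limit homomorphism $f_0\colon J\pa{D_0}\ra J\pa{C_0}$ is then a map of semi-abelian varieties, and by Proposition \ref{prop:mappasemiab} it is determined by an abelian part $\bar f_0\colon J\pa{\tilde D_0}\ra J\pa{\tilde C_0}$ together with a compatibility of extension classes.

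The heart of the proof is the comparison of these extension classes. The commuting square of Proposition \ref{prop:mappasemiab}, applied to both $f_0$ and its dual $\hat f_0$, relates $[p-q]$ and $[r-s]$ through $\bar f_0^*$; invoking Propositions \ref{prop:diffpern} and \ref{prop:pmenoq} to rule out degenerate coincidences on $\Gamma_C$, I would deduce that the image of $H^1\pa{J\pa{\m{D}_0},\mathbb{Z}}\ra H^1\pa{J\pa{\m{C}_0},\mathbb{Z}}$ equals $nH^1\pa{J\pa{\m{C}_0},\mathbb{Z}}$ for some nonzero integer $n$. I expect this to be the main obstacle: a single degeneration does not suffice, and one must also exploit the relations produced by $\hat f_0$, which requires a careful bookkeeping of the vanishing cycles on both sides.

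Finally I would run two independent degenerations of this type, with loops $\gamma_1,\gamma_2$ in the fundamental group of the base. Picard-Lefschetz (Proposition \ref{prop:picardlefsch}) identifies the monodromy invariants $G_i=\Inv\pa{\sigma_{\gamma_i}}$, and the previous step supplies the hypothesis $H_i=n_iG_i$; applying Proposition \ref{prop:monodromydoubledegen} yields $\m{H}_x=n\m{G}_x$ at the level of $H^1$, forcing $B\simeq J\pa{C}$ and $h=\mathfrak{m}_n$. Since $J\pa{C}$ is very generic, $\rk\pa{\NS\pa{J\pa{C}}}=1$, so the polarization $\Xi$ induced by $J\pa{D}$ on $B\simeq J\pa{C}$ is an integer multiple of the standard one; analyzing the behavior of $\ell$ at the boundary shows $\Xi$ is principal, and since $J\pa{D}$ is irreducible, $\ell$ must be an isomorphism. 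Hence $J\pa{D}\simeq J\pa{C}$, $f=\mathfrak{m}_n$, and $g\pa{\Omega}=g$, completing the argument.
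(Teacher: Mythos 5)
Your proposal is correct in outline and takes essentially the same route as the paper: the same factorization of $f$ through $B=\pa{\ker f}^0$ via Stein factorization, the same degeneration of $C$ to a one-nodal curve in $\delta_0$ with rigidity supplied by Corollary \ref{cor:xiao}, the same extension-class comparison (using the dual limit $\hat f_0$ together with Propositions \ref{prop:diffpern} and \ref{prop:pmenoq}) yielding that the image in $H^1\pa{J\pa{\m{C}_0},\mathbb{Z}}$ is $nH^1\pa{J\pa{\m{C}_0},\mathbb{Z}}$, the same double degeneration fed into Proposition \ref{prop:monodromydoubledegen}, and the same polarization endgame forcing $\Xi=\Theta$ and $\ker \ell=0$. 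The one inaccuracy is your assertion that the limit $D_0$ is a single pinch $\tilde D_{0,r,s}$ of a fixed curve: in general $\m{D}_0$ may acquire several nodes and a disconnected normalization, so the extension-class bookkeeping must be done for torus rank $m\geq 1$ and for all components (as in the paper's Propositions \ref{prop:norm} and \ref{prop:D0}, where one shows $\chi\pa{e_i}=\pm n$ or $0$ and normalizes the basis), though this refinement does not change your strategy.
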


Assume that for a very generic Jacobian variety of dimension $g\geq 4$ there exists a curve of genus $p<2g-2$ lying on it. We want to prove that $p=g$. Arguing as in \cite{artPirola}, one finds a map of families
\begin{equation}
\label{eq:mappaF}
F\colon J\pa{\m{D}}/\m{V} \ra J\pa{\m{C}}/\m{V},
\end{equation}
verifying:
\begin{itemize}
\item $\m{V}$ is a finite étale covering of a dense open subset $\m{U}$ of $M^0_g$;
\item for all $t\in \m{V}$, $J\pa{\m{C}_t}$ is a Jacobian variety of dimension $g$;
\item for all $t\in \m{V}$, $J\pa{\m{D}_t}$ is a Jacobian variety of dimension $p<2g-2$;
\item $F$ is surjective on each fibre.
\end{itemize}
\query{ $F_t$ is non-constant. We can also assume that the induced map of abelian schemes $J\pa{F}\colon J\pa{\m{D}} \ra J\pa{\m{C}}$ is surjective rk che cala: condizione chiusa!}
There exists a family of abelian varieties $\m{B}/\m{V}$ such that $F$ can be factorized in two morphisms of abelian schemes
\begin{equation}
\label{eq:fattorizzazione}
G\colon J\pa{\m{D}} \ra \m{B}, \qquad H\colon \m{B} \ra J\pa{\m{C}},
\end{equation}
where $G$ is surjective with connected fibres and $H$ is finite (this can be also seen as a simple application of the Stein factorization, see e.g. \cite[Chapter III, Corollary 11.5]{HART}\query{EGAIII 4.3.3}). The kernel $\m{A}$ of $G$ is a family of abelian varieties and, for each $t\in \m{V}$, $\m{A}_t$ is the identity component of $\ker \pa{F_t\colon J\pa{\m{D}_t}\ra J\pa{\m{C}_t}}$. In the following we prove that $\m{B}\simeq J\pa{\m{C}}$, $H\colon \m{B} \ra J\pa{\m{C}}$ is the multiplication by a non-zero integer (Corollary \ref{cor:moltn}) and $G\colon J\pa{\m{D}} \ra \m{B}$ is an isomorphism (Section \ref{subsec:conclusion}).

Before starting with the proof of Theorem \ref{teo:main} in the general case, we notice that, for $g=4$, the statement is a direct consequence of the following proposition.

\begin{prop}
\label{prop:g+1}
There are no curves of genus $g+1$ lying on a very generic Jacobian of dimension $g\geq 4$.
\end{prop}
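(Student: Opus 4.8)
The plan is to prove Proposition \ref{prop:g+1} by contradiction, assuming that a smooth curve $D$ of genus $g+1$ lies on a very generic Jacobian $J=J\pa{C}$ of dimension $g\geq 4$. Since $g+1<2g-1$ when $g\geq 4$, I can invoke the rigidity result (Theorem \ref{teo:xiao}): the only deformations of the embedding $D\hookrightarrow J$, with $J$ fixed, come from translations. The strategy is to combine this rigidity with a Lefschetz degeneration argument, degenerating $C$ to an irreducible stable curve $C_0$ with one node while keeping the normalization $\tilde C$ fixed. The first step is to set up the family $F\colon J\pa{\m{D}}/\m{V}\ra J\pa{\m{C}}/\m{V}$ as in \eqref{eq:mappaF} and the Stein factorization \eqref{eq:fattorizzazione}; by Corollary \ref{cor:xiao}, since $\dim J\pa{\m{D}}-\dim\m{V}=(g+1)-0<2g-1$ fibrewise, rigidity forces the family $\m{D}$ to be essentially trivial over the degeneration base, so the normalization of the limit $D_0$ is prescribed.

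The heart of the proof is the analysis of the limit map $f_0\colon J\pa{D_0}\ra J\pa{C_0}$ between the two semi-abelian varieties. Using Proposition \ref{prop:mappasemiab}, this limit is governed by the compatibility diagram relating the homomorphisms $H_1\pa{\Gamma,\mathbb{Z}}\ra\pic^0\pa{J\pa{\tilde C}}$ and the corresponding datum for $D_0$, as described by Proposition \ref{prop:jacob}. I would compare the extension classes: the class defining $J\pa{C_0}$ is $\pm[p-q]\in\mathcal{K}^0\pa{J\pa{\tilde C}}$, and the pullback under the dual of $f_0$ must land compatibly in $\Gamma_{\tilde C}$ (up to the integer multiple coming from the base change in Remark \ref{rmk:piclef}). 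Applying Proposition \ref{prop:diffpern} and Proposition \ref{prop:pmenoq}, which pin down how such difference classes can be related by integer multiplication on a non-hyperelliptic curve, I expect to deduce that the induced map on cohomology $H^1\pa{J\pa{\tilde D_0},\mathbb{Z}}\ra H^1\pa{J\pa{\tilde C_0},\mathbb{Z}}$ has image $nH^1\pa{J\pa{\tilde C_0},\mathbb{Z}}$ for some non-zero $n$, exactly as in the outline in the introduction.

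Next I would run the double-degeneration comparison. Choosing two independent Lefschetz degenerations with vanishing cocycles $\delta_1,\delta_2$ whose invariant subspaces $G_i=\Inv\pa{T_{\delta_i}}$ satisfy $G_1+G_2=\m{G}_x$ and $G_1\cap G_2\neq\set 0$ (guaranteed by Proposition \ref{prop:picardlefsch} for a suitable symplectic frame), I can apply Proposition \ref{prop:monodromydoubledegen} to the inclusion $\m{H}\hookrightarrow\m{G}$ of the relevant cohomology local systems. This yields $\m{H}_x=n\m{G}_x$ globally, hence $J\pa{D}$ and $J\pa{C}$ have isomorphic $H^1$ up to a scalar, forcing $\dim J\pa{D}=\dim J\pa{C}$, i.e. $g+1=g$, the desired contradiction. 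The main obstacle, as flagged in the introduction, is the comparison of extension classes: unlike the equigenus case in \cite{artPirola}, here the ranks differ by one, so the torus parts of $J\pa{D_0}$ and $J\pa{C_0}$ do not match, and a single limit $f_0$ is insufficient. I expect to need the limit of the dual map $\hat f_0$ as well, so that the relations on both the $\Gamma_{\tilde C}$ side and its dual simultaneously constrain the homomorphism $\chi$ of Proposition \ref{prop:mappasemiab} tightly enough to force the integer-multiple conclusion and ultimately the contradiction.
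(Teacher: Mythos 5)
Your proposal is not a proof of this proposition: it is a sketch of the proof of Theorem \ref{teo:main} itself, and its final step contains a genuine error. Proposition \ref{prop:monodromydoubledegen} requires an injective map of local systems \emph{of the same rank}, and the only such map available is $\mathfrak{H}\colon R^1\varphi_*\mathbb{Z}\hookrightarrow R^1\psi_*\mathbb{Z}$, comparing $\m{B}$ (of relative dimension $g$) with $J\pa{\m{C}}$ --- not $R^1\mu_*\mathbb{Z}$ (rank $2g+2$) with $R^1\psi_*\mathbb{Z}$ (rank $2g$). Consequently the double degeneration only yields Corollary \ref{cor:moltn}, namely $\m{B}\simeq J\pa{\m{C}}$ with $H$ the multiplication by $n$; it does not ``force $\dim J\pa{D}=\dim J\pa{C}$'', and no contradiction $g+1=g$ appears at that stage. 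To derive a contradiction with $p=g+1$ you would still need all of Section \ref{subsec:conclusion} (coincidence of the polarizations $\Theta$ and $\Xi$, splitting of the induced extension, irreducibility of the theta divisor of $J\pa{\m{D}_t}$) to show that $G$ is an isomorphism, whence $p=g$. Worse, this route is unavailable exactly where Proposition \ref{prop:g+1} is needed: the double degeneration of Section \ref{subsec:doubledeg} is set up only for $g\geq 5$, and the paper invokes Proposition \ref{prop:g+1} precisely to settle the case $g=4$ of the main theorem, so proving the proposition by the main theorem's machinery would make the logic circular there. (Your appeal to Theorem \ref{teo:xiao} with $J$ itself is also not how rigidity enters the paper: it is used via Corollary \ref{cor:xiao} at the boundary, for the Jacobian of the genus-$\pa{g-1}$ normalization.)

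The paper's actual proof is a short moduli count that uses the hypothesis $p=g+1$ in an essential way, which your argument never does. If $p=g+1$, the kernel $\m{A}$ of $G$ in \eqref{eq:fattorizzazione} is a family of \emph{elliptic} curves, and composing $J\pa{\m{D}_t}\ra \m{A}_t$ with the Abel map gives non-constant maps of curves $\m{D}_t \ra \m{A}_t$, i.e.\ coverings of elliptic curves by curves of genus $g+1$. Such coverings depend on only $2g$ moduli, while $\dim \m{V}=3g-3>2g$ for $g\geq 4$; hence along positive-dimensional subvarieties of $\m{V}$ the covering, and so the abelian variety $J\pa{\m{D}_t}$, stays constant, while the quotients $J\pa{\m{C}_t}$ vary in moduli. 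This is absurd, because a fixed abelian variety has only countably many abelian-variety quotients up to isomorphism. If you want to salvage your approach, you must either carry out the full polarization argument of Section \ref{subsec:conclusion} and restrict to $g\geq 5$, or replace it by an elementary dimension count of the above kind.
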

\begin{proof}
With the previous notation, observe that, if $p=g+1$, $\m{A}$ is a family of elliptic curves. Furthermore, for each $t\in \m{V}$, the map $J\pa{\m{D}_t} \ra \m{A}_t$ is induced by a non-constant map of curves $\m{D}_t \ra \m{A}_t$. Since the moduli space of coverings of genus $g+1$ of elliptic curves has dimension $2g$, it follows that $\m{A}\ra J\pa{\m{D}}$ is constant on a dense open set of $\m{V}$ and we get a contradiction.\query{assurdo perché le classi di isogenia sono numerabili}
\end{proof}


\subsection{Comparison of the extension classes}
\label{subsec:Comparison of the extension classes}
Let $C$ be a very generic smooth curve of genus $g-1$, in particular assume that $J\pa{C}$ is simple. Given a generic point $[p-q]\in \Gamma'_C$ (see \eqref{eq:difference} in Section \ref{subsec:semiabvar}), consider the nodal curve $C_{p,q}$ obtained  from $C$ by pinching $p$ and $q$ and a non-constant map $\tau\colon \Delta \ra \bar M^0_g$ such that $\tau\pa{\Delta^*}\subset M^0_g$ and $\tau\pa{0}$ is the class of $C_{p,q}$.\query{$\m{V}$ è denso in $M^0_g$ , $\m{V}$ denso in $M^0_g$, ogni punto a un intorno che interseca $\m{V}$} Let us restrict our initial families of Jacobian varieties $J\pa{\m{D}}$ and $J\pa{\m{C}}$, defined in \eqref{eq:mappaF}, to $\Delta^*$ (we suppose that $\tau\pa{\Delta^*}\subset \m{U}$). Changing base, if necessary, we get a map of families of semi-abelian varieties
\[
F\colon J\pa{\m{D}}/\Delta \ra J\pa{\m{C}}/\Delta,
\]
satisfying the following conditions:
\begin{itemize}
\item $F|_{\Delta^*}$ coincides with the map defined in \eqref{eq:mappaF};
\item $\m{C}_0=C_{p,q}$;
\item $\m{D}_0$ is a nodal curve of arithmetic genus $p<2g-2$.
\end{itemize}

Given the map of families
\[
\hat F\colon J\pa{\m{C}}/\Delta\ra J\pa{\m{D}}/\Delta,
\]
obtained from $F$ by dualization, we denote by $f\colon J\pa{C} \ra J(\tilde {\m{D}_0})$ the map induced by $\hat F_0$ on the abelian quotients.

The aim of the following propositions is to describe the limit $J\pa{\m{D}_0}$ of the family of Jacobian varieties $J\pa{\m{D}}$ and the map $\hat F_0\colon J\pa{\m{C}_0}\ra J\pa{\m{D}_0}$.

\begin{prop}
\label{prop:norm}
The smooth curve $C$ is isomorphic to a connected component of $\tilde {\m{D}_0}$ and $f=i \circ \mathfrak{m}_n$, where $\mathfrak{m}_n\colon J\pa{C} \ra J\pa{C}$ is the multiplication by a non-zero integer $n$ and $i\colon J\pa{C}\ra J(\tilde {\m{D}_0})$ is the natural inclusion.
\end{prop}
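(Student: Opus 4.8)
The plan is to vary the pinching data $[p-q]$ over the surface $\Gamma'_C$ and to play the rigidity of maps into the very generic $J\pa{C}$ off against the extension-class data of the two degenerations, which is read off from Proposition \ref{prop:mappasemiab}.

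First I would set up the degeneration in families over the two-dimensional base $\Gamma'_C$: for each generic $[p-q]$ the construction produces the central fibre $J\pa{\m{D}_0}$, its normalization $\tilde{\m{D}_0}$, and the homomorphism $f\colon J\pa{C}\ra J(\tilde{\m{D}_0})$. Since $F_0$ is surjective onto $J\pa{C_{p,q}}$, whose torus part is nontrivial, $J\pa{\m{D}_0}$ must have positive torus rank, so $\m{D}_0$ is genuinely nodal and $g(\tilde{\m{D}_0})\leq p-1$. Moreover $f\neq 0$ (otherwise $\hat F_0$ would send the abelian quotient $J\pa{C}$ into a torus), and because $J\pa{C}$ is simple and very generic, $f$ is an isogeny onto a nonzero abelian subvariety. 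As $g(\tilde{\m{D}_0})\leq p-1<2\pa{g-1}-1$ and $\dim\Gamma'_C=2$, Corollary \ref{cor:xiao} applies and forces $\tilde{\m{D}_0}$ to be a trivial family; Theorem \ref{teo:xiao} then shows that $f$ is rigid up to translation, hence a fixed homomorphism of abelian varieties as $[p-q]$ moves.

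Next I would extract the extension-class relations. Applying Proposition \ref{prop:mappasemiab} to $\hat F_0$ and to $F_0$, and using that the extension class of $J\pa{C_{p,q}}$ is $[p-q]$ while those of $J\pa{\m{D}_0}$ are the node-differences $\sum_i n_i[p_i-q_i]$ on $\tilde{\m{D}_0}$ (Proposition \ref{prop:jacob}), the two commuting squares read, after the principal-polarization identification $f^*\leftrightarrow\hat f$,
\[
f\pa{[p-q]}=\sum_i m_i[p_i-q_i], \qquad \hat f\Big(\sum_i n_i[p_i-q_i]\Big)=\chi\cdot[p-q],
\]
where $\hat f$ is the dual homomorphism. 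Since $\End\pa{J\pa{C}}=\mathbb{Z}$ for the very generic $J\pa{C}$, the composite $\hat f\circ f$ is multiplication by a nonzero integer. Letting $[p-q]$ sweep out the two-dimensional $\Gamma'_C$ while $\tilde{\m{D}_0}$ stays fixed, the left-hand relation says that the fixed isogeny $f$ carries the surface $\Gamma_C$ (birational to $C\times C$ by Proposition \ref{prop:diffpern}) onto the moving node-differences; this forces the branch points $p_i,q_i$ to trace out a surface and, after isolating the component $\Sigma$ of $\tilde{\m{D}_0}$ carrying them, reduces the two relations to $f\pa{[p-q]}=m[p'-q']$ and $\hat f\pa{[p'-q']}=k[p-q]$ for difference classes on $\Sigma$ and $C$.

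Finally I would identify $\Sigma$ with $C$ and pin down the multiple. The matched relations exhibit a birational identification of $\Gamma_C$ with $m\Gamma_\Sigma$ compatible with the difference-map structure; by Proposition \ref{prop:diffpern} this identifies $C\times C$ with $\Sigma\times\Sigma$ as difference varieties, so $\Sigma\simeq C$, i.e.\ $C$ is a connected component of $\tilde{\m{D}_0}$, with $i$ the corresponding inclusion. Applying Proposition \ref{prop:pmenoq} to the integral relation between the matched difference classes then forces $f$ to be multiplication by an integer $n$ into this $C$-factor, giving $f=i\circ\mathfrak{m}_n$. The hard part is the alignment in the penultimate step: a priori the image of $f$ could sit diagonally across several factors $J\pa{\Sigma_j}$ and the node-differences could be spread over several components, so the genuine obstacle is to show, by combining the relations coming from \emph{both} $F_0$ and the dual $\hat F_0$, that all the relevant data concentrate on a single component isomorphic to $C$ --- this is exactly the ``more careful analysis of the relations between the extension classes'' flagged in the introduction.
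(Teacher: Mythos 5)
Your overall route --- degenerate over $\Gamma'_C$, rigidity via Theorem \ref{teo:xiao} and Corollary \ref{cor:xiao}, extension-class squares from Propositions \ref{prop:mappasemiab} and \ref{prop:jacob}, then Propositions \ref{prop:diffpern} and \ref{prop:pmenoq} --- is indeed the paper's, but the step you yourself single out as ``the genuine obstacle'' (that the image of $f$ and the relevant node differences concentrate on a single component) is never actually proved: you only announce that it should follow ``by combining the relations coming from both $F_0$ and $\hat F_0$''. In the paper this point is disposed of \emph{before} any extension-class analysis, by an elementary genus count: since $J\pa{C}$ is simple, any factor $J\pa{D_j}$ of $J(\tilde{\m{D}_0})$ to which $f\pa{J\pa{C}}$ projects nontrivially contains an abelian subvariety isogenous to $J\pa{C}$, hence $g\pa{D_j}\geq g-1$; since $\sum_j g\pa{D_j}=g(\tilde{\m{D}_0})\leq p-1<2g-3=2\pa{g-1}-1$, at most one factor qualifies, so $f\pa{J\pa{C}}\subset J\pa{X}$ for a single component $X$ (your $\Sigma$), and $f^*$ automatically kills the contributions of every other component to the node classes. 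Your ``diagonal'' worry thus evaporates at the outset, and no two-sided bookkeeping is needed for it; as written, your reduction to the relations $f\pa{[p-q]}=m[p'-q']$, $\hat f\pa{[p'-q']}=k[p-q]$ is an unproven assertion. Relatedly, Corollary \ref{cor:xiao} only forces triviality of the family of components on which the map is non-constant; the remaining components of $\tilde{\m{D}_0}$ may move with $[p-q]$, so your blanket claim that $\tilde{\m{D}_0}$ is a trivial family is too strong (harmless once the concentration on $X$ is established, but your later sweep of node differences tacitly relies on it).

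The second gap is the passage from the matched relations to $\Sigma\simeq C$. What the relation $f^*\pa{\Gamma_X}=n\Gamma_C$ yields --- after the countability step fixing a single $n$, namely $\Gamma_C\subset\bigcup_{n\geq 1}\mathfrak{m}_n^{-1}\pa{f^*\pa{\Gamma_X}}$ forces one $n$ to work by irreducibility, a point you also elide --- is only a \emph{dominant} rational map $X\times X \dashrightarrow \Gamma_X \ra n\Gamma_C \dashrightarrow C\times C$ via Proposition \ref{prop:diffpern}, hence a non-constant morphism $h\colon X\ra C$ of a priori arbitrary degree: any covering $X\ra C$ produces exactly such dominance, so nothing ``identifies $C\times C$ with $\Sigma\times\Sigma$ as difference varieties'' without a degree computation, and a purported birationality of $\Gamma_C$ with $m\Gamma_\Sigma$ cannot be extracted from the two relations alone ($\Sigma$ is not known to be generic, so no analogue of Proposition \ref{prop:pmenoq} is available on it). The paper kills $\deg h\geq 2$ by Riemann--Hurwitz combined with the standing numerical hypothesis $p<2g-2$: one has $2g\pa{X}-2\leq 2p-4<4g-8\leq \deg h\pa{2g-4}$ whenever $\deg h\geq 2$. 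Your proposal never invokes Riemann--Hurwitz nor uses $p<2g-2$ at this stage, yet the conclusion is false without it. Finally, once $X\simeq C$, the identity $f=\mathfrak{m}_k$ follows directly from $\End\pa{J\pa{C}}=\mathbb{Z}$, with Proposition \ref{prop:pmenoq} needed only to match $n=\pm k$; your phrasing inverts this, asking Proposition \ref{prop:pmenoq} to do work it cannot do on its own.
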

\begin{proof}
Since $J\pa{C}$ is simple, $f$ has finite kernel and $f\pa{J\pa{C}}$ is an irreducible abelian subvariety, of dimension $g-1$, that does not contain curves of geometric genus lower than $g-1$. From the inequality $g\pa{\m{D}_0}\leq p-1<2g-3$, we can conclude that there is only a connected component $X$ of $\tilde {\m{D}_0}$ such that $g\pa{X}\geq g-1$. It follows $f\pa{J\pa{C}}\subset J\pa{X}$.
We want to prove that $X$ is isomorphic to $C$ and that
\begin{equation}
\label{eq:suiquoz}
f\colon J\pa{C}\ra J\pa{X}
\end{equation}
is the multiplication by a non-zero integer $n$.
Now we procede by steps:

\medskip

\emph{Step I: rigidity.}

\noindent By varying the point $[p-q]$ in $\Gamma'_C$, and consequently the curve $C_{p,q}$, we can perform different degenerations of the family of curves $\m{D}/\m{V}$ (defined in \eqref{eq:mappaF}). By the previous construction, to each family of degenerations corresponds a non-constant map of abelian schemes
\[
\pa{J\pa{C}\times B}/B \ra J\pa{\m{X}}/B,
\]
where $J\pa{\m{X}}$ is a family of Jacobians and
\[
g-1=g\pa{C}\leq g\pa{X_t}< 2g\pa{C}-1=2g-3
\]
for each $t\in B$. By Corollary \ref{cor:xiao}, the family $J\pa{\m{X}}$ is trivial. This means that, though the limit $F_0\colon J\pa{\m{D}_0} \ra J\pa{\m{C}_0}=J\pa{C_{p,q}}$ depends on $[p-q]\in \Gamma'_C$, the map $f\colon J\pa{C} \ra J(X)$ on the abelian parts is independent of the choice of the point $[p-q]$.

\medskip

\emph{Step II: $f^*\pa{\Gamma_X}=n\Gamma_C$ for some non-zero integer $n$.}

\noindent Since $f$ is limit of a map of Jacobian varieties, by Proposition \ref{prop:mappasemiab} and Proposition \ref{prop:jacob}, for each $[p-q]\in \Gamma_C\subset \pic^0\pa{J\pa{C}}$ there exists $[y-z]\in \Gamma_X\subset \pic^0\pa{J\pa{X}}$ and a non-zero integer $n$ such that $f^*\pa{[y-z]}=n[p-q]$. It follows that
\[
\Gamma_C\subset \bigcup_{n\geq 1}\mathfrak{m}_n^{-1}\pa{f^*\pa{\Gamma_X}}.
\]
This implies that $\Gamma_C$ is contained in $\mathfrak{m}_n^{-1}\pa{f^*\pa{\Gamma_X}}$ for some $n$. By the irreducibility of $f^*\pa{\Gamma_X}$, it holds $f^*\pa{\Gamma_X}=n\Gamma_C$.

\medskip

\emph{Step III: $X$ is isomorphic to $C$ and $f\colon J\pa{C} \ra J\pa{X}$ is the multiplication by $n$.}

\noindent By the previous step and Proposition \ref{prop:diffpern}, we can define a rational dominant map
\[
X\times X \dashrightarrow \Gamma_X \xrightarrow{{f}^*} n\Gamma_C \dashrightarrow C\times C.
\]
Consequently we have a non-constant morphism $h\colon X \ra C$. By Riemann-Hurwitz formula we get
\[
4g-6>2p-2> 2g\pa{X}-2\geq \deg h\pa{2g\pa{C}-2}=\deg h\pa{2g-4}
\]
and so $\deg h=1$ and $C\simeq X$. Since we can assume $\End\pa{J\pa{C}}=\mathbb{Z}$ (see e.g. \cite{Parameters}), $f=\mathfrak{m}_k$ for some $k\in \mathbb{Z}$ (notice that, by Proposition \ref{prop:pmenoq}, $n=\pm k$).
\end{proof}

Denote by $D_1,\ldots,D_k$ the connected components of $\tilde {\m{D}_0}$. By Proposition \ref{prop:norm}, since $C$ has no non-trivial automorphisms, we can assume $D_1= C$. We have the following result:

\begin{prop}
\label{prop:D0}
The map of semi-abelian varieties $\hat F_0\colon J\pa{\m{C}_0}\ra J\pa{\m{D}_0}$ corresponds to the following morphism of extensions
\[
\xymatrix{
0 \ar[r] & \mathbb{G}_m \ar[r]\ar[d]^{\chi} & J\pa{\m{C}_0}\ar[d]^{\hat F_0} \ar[r] & J\pa{C} \ar[r]\ar[d]^{\eta} & 0\\
0 \ar[r] & \prod \mathbb{G}_m \ar[r] & J\pa{\m{D}_0} \ar[r] & J\pa{C}\times J\pa{D_2}\times \ldots\times J\pa{D_k}\ar[r] &0
}
\]
where $\chi\pa{1}=\pa{n,\ldots,0}$ and $\eta$ is the composition of the multiplication by $n$ with the inclusion in the first factor.
\end{prop}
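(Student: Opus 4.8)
By Proposition \ref{prop:mappasemiab} the map $\hat F_0$ of semi-abelian varieties is determined by its abelian part and by the induced map of tori $\chi$, together with the compatibility between these and the extension classes. The statement on the abelian part is merely a reformulation of Proposition \ref{prop:norm}: the map induced by $\hat F_0$ on the abelian quotients is $f\colon J\pa{C}\ra J(\tilde{\m{D}_0})$, and there we proved $f=i\circ\mathfrak{m}_n$; since $C$ has no non-trivial automorphism we set $D_1=C$, so that $\eta=i\circ\mathfrak{m}_n$ is multiplication by $n$ into the first factor. Hence everything comes down to computing the map of tori $\chi$, i.e.\ how the one-dimensional torus of $J\pa{\m{C}_0}$ sits inside the torus $\prod\mathbb{G}_m$ of $J\pa{\m{D}_0}$.

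I would determine $\chi$ by comparing the extension classes attached to $\hat F_0$ and to the dual map $F_0\colon J\pa{\m{D}_0}\ra J\pa{\m{C}_0}$; as stressed in the introduction, both are needed. Applying Proposition \ref{prop:mappasemiab} to $\hat F_0$ and writing $h$ explicitly via Proposition \ref{prop:jacob}, the associated character map $\chi^{\vee}\colon H_1\pa{\Gamma_{\m{D}_0},\mathbb{Z}}\ra\mathbb{Z}$ satisfies
\[
\chi^{\vee}\pa{b}\,[p-q]=n\cdot L_1\pa{b}\qquad\forall\,b\in H_1\pa{\Gamma_{\m{D}_0},\mathbb{Z}},
\]
where $L_1\pa{b}$ denotes the component of $h_{\m{D}_0}\pa{b}$ in $\pic^0\pa{J\pa{C}}$; in particular every such component is a multiple of $[p-q]$. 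On the other hand the abelian part of $F_0$ is the dual of $\eta$, that is $\mathfrak{m}_n\circ\mathrm{pr}_1$, so $(F_0^{\mathrm{ab}})^*=i_*\circ\mathfrak{m}_n$; the compatibility of Proposition \ref{prop:mappasemiab} for $F_0$ then produces a cycle $\gamma_0\in H_1\pa{\Gamma_{\m{D}_0},\mathbb{Z}}$ with
\[
h_{\m{D}_0}\pa{\gamma_0}=\pa{n[p-q],0,\ldots,0}\in\prod_i\pic^0\pa{J\pa{D_i}}.
\]
That the torus data of $F_0$ and $\hat F_0$ are recorded by the same integers reflects the fact that, over $\Delta^*$, $\hat F$ is the dual of $F$ with respect to the polarizations; at the boundary this is read off from the Picard--Lefschetz description of the vanishing cocycles (Proposition \ref{prop:picardlefsch}).

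It then remains to conclude, from the two relations above, that $\chi^{\vee}\pa{e_1}=n$ and $\chi^{\vee}\pa{e_j}=0$ for $j\geq 2$ in a suitable basis, i.e.\ $\chi\pa{1}=\pa{n,0,\ldots,0}$. This is where I would use the genericity of $[p-q]\in\Gamma'_C$. By the rigidity of Step I of Proposition \ref{prop:norm}, as $[p-q]$ moves in the surface $\Gamma'_C$ the normalization $\tilde{\m{D}_0}$, its components and the combinatorial type of $\m{D}_0$ stay fixed, while the attaching points of the nodes move holomorphically; the class $n[p-q]$, which is the $C$-component of $h_{\m{D}_0}\pa{\gamma_0}$, must therefore sweep out the whole surface $n\Gamma_C$. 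Since $n\Gamma_C$ is irreducible and birational to $C\times C$ (Proposition \ref{prop:diffpern}) and a generic point of $n\Gamma_C$ is, in the spirit of Proposition \ref{prop:pmenoq}, not a non-trivial combination of several differences, the cycle $\gamma_0$ must localize on a single torus factor $e_1$ of $\Gamma_{\m{D}_0}$, with multiplicity $n$ and $C$-component equal to $[p-q]$; the remaining factors contribute trivially on $C$, and the lower-genus components $D_2,\ldots,D_k$ carry no part of the cycle. Reinserting this into the two displayed identities gives the claim.

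The main obstacle is precisely this last, combinatorial--geometric step: extracting from the two extension-class identities the exact shape of the dual graph of $\m{D}_0$ around the component $C$, and in particular excluding any contribution of the factors $e_2,\ldots,e_m$ or of the components $D_2,\ldots,D_k$ to the $C$-component. The algebraic input (Propositions \ref{prop:mappasemiab} and \ref{prop:jacob}) only sets up the identities; it is the genericity of $[p-q]$, through Propositions \ref{prop:diffpern} and \ref{prop:pmenoq}, that forces the combinatorics.
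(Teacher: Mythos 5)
Your setup coincides with the paper's: applying Proposition \ref{prop:mappasemiab} to $\hat F_0$ and substituting the abelian part $f^*=\mathfrak{m}_n\circ \mathrm{pr}_1$ from Proposition \ref{prop:norm} yields exactly the identity $\chi\pa{b}\,[p-q]=n\,L_1\pa{b}$ that you display (in the paper this is the commutative square $\beta\circ\chi=f^*\circ\alpha$). From there, however, your route has a genuine gap, and you flag it yourself: the ``localization'' of a cycle on a single torus factor is asserted, not proved, and your sweeping argument does not exclude, for instance, two basis cycles each with $\chi$-value $\pm n$ and $C$-component $\pm[p-q]$. The paper never proves any such localization, because none is needed. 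It argues basis element by basis element: for each $e_i$, commutativity gives $\chi\pa{e_i}[p-q]=n[a-b]$ with $[a-b]\in\Gamma_C$ coming from Proposition \ref{prop:jacob}, and since $[p-q]$ is a \emph{generic} point of $\Gamma'_C$, Proposition \ref{prop:pmenoq} --- applied literally to this single relation between two differences, not to an unstated extension about sums of several differences ``in the spirit of'' it --- forces $\chi\pa{e_i}\in\set{0,\pm n}$. Non-vanishing, $\chi\not\equiv 0$, follows because $\hat F_0$ is a limit of injective maps of Jacobians. Finally, the normal form $\chi\pa{1}=\pa{n,0,\ldots,0}$ is obtained by a mere change of basis of $\mathbb{Z}^m\simeq \Hom\pa{\prod\mathbb{G}_m,\mathbb{G}_m}$ (e.g.\ replacing $e_i$ by $e_i\mp e_1$ when $\chi\pa{e_i}=\pm n$), which need not respect the edges or cycles of the dual graph: the proposition asserts the existence of a suitable presentation of the extension, so the combinatorics of $\Gamma_{\m{D}_0}$ around $C$ --- which you identify as the main obstacle --- is simply not part of the problem.

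Two subsidiary problems in your detour through $F_0$. First, it rests on the claim that the abelian part of $F_0$ is the dual $\mathfrak{m}_n\circ\mathrm{pr}_1$ of $\eta$; Proposition \ref{prop:norm} computes only the abelian part of $\hat F_0$, and your appeal to polarizations and Picard--Lefschetz data is a gesture, not an argument, that the two semi-abelian limits are dual compatibly with the maps. The paper's proof of this proposition avoids the issue entirely by using $\hat F_0$ alone (the introduction's remark about needing the dual map refers to the section's overall strategy of working with $\hat F$ rather than $F$, not to a double use inside this proof). Second, your moving-$[p-q]$ argument needs the dual graph, the components $D_i$ and the distinguished cycle to stay constant while only the attaching points vary; Step I of Proposition \ref{prop:norm} gives independence of the map $f$ on abelian parts, but constancy of the combinatorial type of the limit curve $\m{D}_0$ is an additional claim you would have to establish. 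Since the paper's per-basis-element argument uses the genericity of $[p-q]$ only through Proposition \ref{prop:pmenoq}, for a single fixed degeneration, all of this machinery can be dropped.
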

\begin{proof}
We recall that (see Section \ref{subsec:semiabvar}) the semi-abelian variety $J\pa{\m{D}_0}$ is determined by a morphism
\[
\alpha \colon \mathbb{Z}^{m} \simeq \Hom\pa{\prod \mathbb{G}_m, \mathbb{G}_m }\ra \pic^0\pa{J\pa{C}}\times \pic^0\pa{J\pa{D_2}}\times \cdots \times \pic^0\pa{J\pa{D_k}},
\]
where $m:=p-g(\tilde {\m{D}_0})$. By Proposition \ref{prop:mappasemiab}, $\hat F_0\colon J\pa{\m{C}_0}=J\pa{C_{p,q}}\ra J\pa{\m{D}_0}$ corresponds to a diagram
\[
\xymatrix{
\mathbb{Z}^{m}\ar[d]_{\chi}\ar[r]^{\alpha\qquad\qquad\qquad\qquad\qquad} & \pic^0\pa{J\pa{C}}\times \pic^0\pa{J\pa{D_2}}\times \cdots \times \pic^0\pa{J\pa{D_k}}\ar[d]^{f^*}\\
\mathbb{Z}\ar[r]^{\beta} & \pic^0\pa{J\pa{C}}
}
\]
where, $f^*$ is the composition of the first projection with $\mathfrak{m}_n$ (see Proposition \ref{prop:norm}). Given a basis $e_1,\ldots, e_m$ of $\mathbb{Z}^m$, we claim that, for each $i=1,\ldots, m$, either $\chi\pa{e_i}=\pm n$, or $\chi\pa{e_i}=0$. It holds
\[
f^*\pa{\alpha\pa{e_i}}=\beta\pa{\chi\pa{e_i}}=j[p-q],
\]
for some $j\in \mathbb{Z}$. Then, by Proposition \ref{prop:jacob}, if $j\neq 0$,
\[
n[a-b]= f^*\pa{\alpha\pa{e_i}}=j[p-q]
\]
for some $[a-b]\in \Gamma_C$. From Proposition \ref{prop:pmenoq} it follows
\[
[p-q]=\pm[a_i-b_i]
\]
and $j=n$. We recall that, since $\hat F_0$ is limit of injective maps of Jacobians, $\chi\not\equiv 0$. Up to a suitable choice of the basis $e_1,\ldots, e_m$, we can assume $\chi\pa{e_1}=n$ and $\chi\pa{e_i}=0$ for $i=2,\ldots,m$.
\end{proof}

\begin{cor}
\label{cor:cohomology}
The image of the map
\[
H^1\pa{J\pa{\m{D}_0}, \mathbb{Z}} \ra H^1\pa{J\pa{\m{C}_0}, \mathbb{Z}},
\]
induced by $\hat F_0\colon J\pa{\m{C}_0}\ra J\pa{\m{D}_0}$ on the cohomology groups, is $n H^1\pa{J\pa{\m{C}_0}, \mathbb{Z}}$.
\end{cor}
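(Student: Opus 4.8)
The plan is to pass to cohomology in the morphism of extensions furnished by Proposition \ref{prop:D0}. Recall that for a semi-abelian variety $0\to T\to S\to A\to 0$ the weight filtration yields a short exact sequence
\[
0\to H^1(A,\mathbb{Z})\to H^1(S,\mathbb{Z})\to H^1(T,\mathbb{Z})\to 0,
\]
where $H^1(T,\mathbb{Z})\simeq \Hom(T,\mathbb{G}_m)$ is the character lattice; since the quotient is free, this splits over $\mathbb{Z}$. Applying it to $S=J(\m{C}_0)=J(C_{p,q})$ and to $S'=J(\m{D}_0)$, the map $\hat F_0$ induces a morphism of the two sequences, so that $\hat F_0^*$ restricts to $\eta^*$ on the weight-one (abelian) subobjects and induces $\chi^*$ on the weight-two (toric) quotients, in the notation of Proposition \ref{prop:D0}.

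First I would compute the two graded maps. On the toric quotients, $\chi^*\colon \Hom(\prod\mathbb{G}_m,\mathbb{G}_m)=\mathbb{Z}^m\to\Hom(\mathbb{G}_m,\mathbb{G}_m)=\mathbb{Z}$ is the transpose of $\chi(1)=(n,0,\ldots,0)$, hence $(a_1,\ldots,a_m)\mapsto n a_1$, with image $n\mathbb{Z}=nH^1(\mathbb{G}_m,\mathbb{Z})$. On the abelian pieces, $\eta$ is $\mathfrak{m}_n$ followed by the inclusion of the first factor, so $\eta^*\colon H^1(J(C)\times\prod J(D_j),\mathbb{Z})\to H^1(J(C),\mathbb{Z})$ is the first projection composed with multiplication by $n$, with image $nH^1(J(C),\mathbb{Z})$. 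Thus on both graded pieces the map is multiplication by $n$.

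The crux is to upgrade this to the integral statement that $\Im(\hat F_0^*)$ is exactly $nH^1(J(\m{C}_0),\mathbb{Z})$, and not merely that it has the right associated graded. Fixing a splitting $H^1(J(\m{C}_0),\mathbb{Z})\simeq H^1(J(C),\mathbb{Z})\oplus\mathbb{Z}u$ and a lift $u'\in H^1(J(\m{D}_0),\mathbb{Z})$ of a generator mapping to $n$ under $\chi^*$, one gets $\hat F_0^*(u')=nu+c$ with $c\in H^1(J(C),\mathbb{Z})$, and combining with $\eta^*$ shows $\Im(\hat F_0^*)=nH^1(J(C),\mathbb{Z})+\mathbb{Z}(nu+c)$. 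The assertion is therefore equivalent to the divisibility $c\in nH^1(J(C),\mathbb{Z})$ of this ``mixing'' term, together with the analogous divisibility for the remaining toric generators $e_2,\ldots,e_m$.

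The hard part is exactly this divisibility, and here the full comparison of extension classes of Proposition \ref{prop:D0}, not just its associated graded, is indispensable: the mixing term is governed by the extension class of $J(\m{D}_0)$ pulled back along $\eta$ versus $n$ times the class $[p-q]$ of $J(C_{p,q})$, and the exact relation $f^*(\alpha(e_1))=n[p-q]$ forces it to be divisible by $n$. The cleanest way to package this is to exploit the rigidity of morphisms of semi-abelian varieties: since $\Hom(J(C),\prod\mathbb{G}_m)=0$, by Proposition \ref{prop:mappasemiab} such a morphism is determined by its toric and abelian parts and the compatibility of the extension classes, whence $\hat F_0$ coincides with the composite $J(\m{C}_0)\xrightarrow{\mathfrak{m}_n}J(\m{C}_0)\xrightarrow{\iota}J(\m{D}_0)$, where $\iota$ identifies $J(C_{p,q})$ with the sub-semi-abelian variety of $J(\m{D}_0)$ spanned by the first abelian and toric factors. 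Since $\mathfrak{m}_n^*$ is multiplication by $n$ on $H^1(J(\m{C}_0),\mathbb{Z})$ and $\iota$ admits a retraction, so that $\iota^*$ is surjective, one obtains $\Im(\hat F_0^*)=n\,\iota^*\big(H^1(J(\m{D}_0),\mathbb{Z})\big)=nH^1(J(\m{C}_0),\mathbb{Z})$. The main obstacle to watch is the $n$-torsion ambiguity in the extension classes of the non-principal factors $e_2,\ldots,e_m$, which must be shown not to enlarge the image; controlling this is precisely the role of the exact, rather than up-to-isogeny, comparison recorded in Proposition \ref{prop:D0}.
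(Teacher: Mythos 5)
Your weight-filtration setup and the computation of the two graded pieces are exactly what the paper itself reads off from Proposition \ref{prop:D0} (the corollary is stated there with no separate proof), and your observation that the graded statement does not formally imply the integral one --- that everything reduces to the divisibility by $n$ of the mixing terms --- is correct and in fact sharper than anything the paper records. The gap is in how you discharge that divisibility. The factorization $\hat F_0=\iota\circ\mathfrak{m}_n$ does not follow from Proposition \ref{prop:mappasemiab} together with $\Hom\pa{J\pa{C},\prod\mathbb{G}_m}=0$: rigidity gives \emph{uniqueness} of a map with prescribed toric and abelian parts, but the \emph{existence} of $\iota$, i.e.\ of a sub-semi-abelian variety $S'\subset J\pa{\m{D}_0}$ isomorphic to $J\pa{C_{p,q}}$, containing the first torus factor and projecting isomorphically onto the first abelian factor, is \emph{equivalent} to the vanishing of the classes $c_i:=\mathrm{pr}_{J\pa{C}}\pa{\alpha\pa{e_i}}$ for $i\geq 2$: the quotient $S'/\mathbb{G}_m$ would be a group-theoretic splitting of the extension of $J\pa{C}$ by $\prod_{i\geq 2}\mathbb{G}_m$ with class vector $\pa{c_2,\ldots,c_m}$. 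But the commutative square in Proposition \ref{prop:D0} only yields $n\,c_i=f^*\pa{\alpha\pa{e_i}}=\beta\pa{\chi\pa{e_i}}=0$, i.e.\ the $c_i$ are $n$-torsion; your appeal to ``the exact, rather than up-to-isogeny, comparison recorded in Proposition \ref{prop:D0}'' for these classes is unsupported, since the proposition pins down an extension class exactly only for the $e_1$-component, and there only because the proof invokes Proposition \ref{prop:jacob} (the class lies on $\Gamma_C$) and Proposition \ref{prop:pmenoq} to get $\mathrm{pr}_{J\pa{C}}\pa{\alpha\pa{e_1}}=\pm[p-q]$ on the nose. (For the same reason, your parenthetical that the relation $f^*\pa{\alpha\pa{e_1}}=n[p-q]$ by itself ``forces'' divisibility is not right --- it forces only $n$-torsion --- although for $i=1$ Propositions \ref{prop:jacob} and \ref{prop:pmenoq} do close the case, as you essentially indicate.)

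Moreover the failure is substantive, not expository: a direct lattice computation shows that a nonzero torsion $c_i$ genuinely enlarges the image. Represent $c_i$ by a unitary character $\theta_i$ of $\Lambda_C$ with lift $\tilde\theta_i\colon\Lambda_C\ra\mathbb{R}$, $n\tilde\theta_i$ integral. By the uniqueness you correctly invoke, the only map with the given toric and abelian parts sends the standard lift of $\lambda\in\Lambda_C$ to $n\pa{\text{lift of }\lambda}+n\tilde\theta_i\pa{\lambda}\,u_i+\cdots$ in $H_1\pa{J\pa{\m{D}_0},\mathbb{Z}}$, whence $\hat F_0^*\pa{u_i^\vee}$ is the integral functional $n\tilde\theta_i$, which lies in $nH^1\pa{J\pa{\m{C}_0},\mathbb{Z}}$ if and only if $c_i=0$; if $c_i$ has exact order $d$ with $1<d\mid n$, the image strictly contains $nH^1\pa{J\pa{\m{C}_0},\mathbb{Z}}$. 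So your argument is circular precisely at the step carrying the content: to complete it one must actually prove $c_i=0$ for $i\geq 2$ --- for instance by showing that for very generic $C$ the sums of point differences arising via Proposition \ref{prop:jacob} carry no nonzero $n$-torsion --- an input supplied neither by your proposal nor, strictly speaking, by the paper's own terse derivation of the corollary from Proposition \ref{prop:D0}.
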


\subsection{Local systems}
\label{subsec:teomon}

Let consider again the families of abelian varieties $J\pa{D}/\m{V}$, $\m{B}/\m{V}$ and $J\pa{\m{C}}/\m{V}$, defined in \eqref{eq:mappaF}, and \eqref{eq:fattorizzazione} and denote by
\[
\mu\colon J\pa{\m{D}}\ra \m{V}, \qquad \varphi\colon \m{B} \ra \m{V}, \qquad \psi\colon J\pa{\m{C}} \ra \m{V}
\]
the projections on the base. Given a point $t\in \m{V}$, we denote by
\begin{align*}
\nu&\colon \pi_1\pa{\m{V}, t} \ra H^1\pa{J\pa{\m{D}_t}, \mathbb{Z}},\\
\rho&\colon \pi_1\pa{\m{V}, t} \ra H^1\pa{\m{B}_t, \mathbb{Z}},\\
\sigma&\colon \pi_1\pa{\m{V}, t} \ra H^1\pa{J\pa{\m{C}_t}, \mathbb{Z}}
\end{align*}
the monodromy representations corresponding to the local systems $R^1\mu_*\mathbb{Z}, R^1\varphi_*\mathbb{Z}$ and $R^1\psi_*\mathbb{Z}$. We recall that $\m{V}$ has a finite map $\pi\colon \m{V}\ra \m{U}$ on an open dense set of $M^0_g$ and, by construction (see Section \ref{subsec:Comparison of the extension classes}),
\begin{equation}
\label{eq:restrcurve}
R^1\psi_*\mathbb{Z}=\pi^{-1}R^1\omega_*\mathbb{Z}
\end{equation}
where $\omega\colon \m{C} \ra \m{V}$ is the restriction of the universal family of curves to $\m{V}$.

The map $F\colon J\pa{\m{D}}\ra J\pa{\m{C}}$ (or, more precisely, its dual $\hat F\colon J\pa{\m{C}}\ra J\pa{\m{D}}$) induces a map of local systems
\begin{equation}
\label{eq:mapls}
R^1\mu_*\mathbb{Z}\xrightarrow{\mathfrak{F}}  R^1\psi_*\mathbb{Z}
\end{equation}
which factorizes in a surjective map $\mathfrak{G}\colon R^1\mu_*\mathbb{Z}\ra R^1\varphi_*\mathbb{Z}$ followed by an injective map $\mathfrak{H}\colon R^1\varphi_*\mathbb{Z}\ra R^1\psi_*\mathbb{Z}$.

In this Section we restrict the local systems introduced before to the pointed disk $\Delta^*$ considered in Section \ref{subsec:Comparison of the extension classes}. We have the following result:

\begin{prop}
\label{prop:specMonodromy}
Given a generator $\gamma$ of $\pi_1\pa{\Delta^*, t}$, then
\[
\Inv\pa{\rho_{\gamma}}=n\Inv\pa{\sigma_\gamma}.
\]
\end{prop}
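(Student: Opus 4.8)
The plan is to reduce the assertion to the central-fibre computation already carried out in Corollary \ref{cor:cohomology}. First I would describe the base family: since $\m{C}_0=C_{p,q}$ has a single node, $\m{C}/\Delta$ is a Lefschetz degeneration, so by Proposition \ref{prop:picardlefsch} we have $\sigma_\gamma=T_\delta$ for the vanishing cocycle $\delta$, and $\Inv\pa{\sigma_\gamma}=\delta^\perp$ has rank $2g-1$. The map $\mathfrak{H}\colon R^1\varphi_*\mathbb{Z}\hookrightarrow R^1\psi_*\mathbb{Z}$ is injective and equivariant for the monodromy, so $\mathfrak{H}_t$ identifies $H^1\pa{\m{B}_t,\mathbb{Z}}$ with a finite-index subgroup $L:=\Im\mathfrak{H}_t\subseteq H^1\pa{J\pa{\m{C}_t},\mathbb{Z}}$ on which $\rho_\gamma$ becomes the restriction of $\sigma_\gamma$. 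A short diagram chase, using injectivity and equivariance of $\mathfrak{H}_t$, gives $\mathfrak{H}_t\pa{\Inv\pa{\rho_\gamma}}=L\cap\Inv\pa{\sigma_\gamma}$; hence the asserted equality $\Inv\pa{\rho_\gamma}=n\Inv\pa{\sigma_\gamma}$ is exactly the claim
\[
L\cap\Inv\pa{\sigma_\gamma}=n\,\Inv\pa{\sigma_\gamma}
\]
inside $H^1\pa{J\pa{\m{C}_t},\mathbb{Z}}$.

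Next I would pass from the nearby fibre to the central fibre. Each of the three families over $\Delta$ degenerates to a semi-abelian variety, and for these degenerations of abelian varieties the integral specialization maps
\[
\mathrm{sp}_C\colon H^1\pa{J\pa{\m{C}_0},\mathbb{Z}}\ra\Inv\pa{\sigma_\gamma},\qquad \mathrm{sp}_B\colon H^1\pa{\m{B}_0,\mathbb{Z}}\ra\Inv\pa{\rho_\gamma}
\]
are \emph{isomorphisms} over $\mathbb{Z}$: the integral cohomology of the semi-abelian limit computes exactly the monodromy invariants (the integral local invariant cycle property, via the models of \cite{FaltingsLibro,ChaiLibro}). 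Specialization is functorial, so the map of families inducing $\mathfrak{H}$ produces a commutative square relating its central-fibre pullback $\mathfrak{H}_0\colon H^1\pa{\m{B}_0,\mathbb{Z}}\ra H^1\pa{J\pa{\m{C}_0},\mathbb{Z}}$ with the restriction of $\mathfrak{H}_t$ to the invariants. Combining with the first paragraph, $L\cap\Inv\pa{\sigma_\gamma}=\mathfrak{H}_t\pa{\Inv\pa{\rho_\gamma}}=\mathrm{sp}_C\pa{\Im\mathfrak{H}_0}$, so everything reduces to identifying the image of $\mathfrak{H}_0$.

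Finally I would compute $\Im\mathfrak{H}_0$ from the factorization $\hat F_0^*=\mathfrak{H}_0\circ\mathfrak{G}_0$ of the central-fibre pullback. The essential point is that the map of families inducing $\mathfrak{G}$ is, on the central fibre, a closed immersion of semi-abelian varieties — it is the specialization of the closed immersion dual to the surjection $G$ with connected kernel — and a sub-semi-abelian variety has saturated $H_1$, so $\mathfrak{G}_0$ is \emph{surjective} on integral $H^1$. Therefore $\Im\mathfrak{H}_0=\mathfrak{H}_0\pa{\Im\mathfrak{G}_0}=\Im\hat F_0^*$, which by Corollary \ref{cor:cohomology} equals $nH^1\pa{J\pa{\m{C}_0},\mathbb{Z}}$. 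Applying the isomorphism $\mathrm{sp}_C$ then gives $L\cap\Inv\pa{\sigma_\gamma}=\mathrm{sp}_C\pa{nH^1\pa{J\pa{\m{C}_0},\mathbb{Z}}}=n\,\Inv\pa{\sigma_\gamma}$, which is the statement.

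I expect the main obstacle to be the integral bookkeeping of the degeneration, rather than the formal diagram chases. Concretely, one must justify that the specialization maps are isomorphisms over $\mathbb{Z}$ (and not merely over $\mathbb{Q}$, as in the general local invariant cycle theorem) and that the closed immersion dual to $G$ persists, with its sub-semi-abelian structure, on the central fibre so that $\mathfrak{G}_0$ is onto. Both facts come from working with the semi-abelian (N\'eron) models, where the relevant exact sequences of character lattices and of $H_1$ stay exact and saturated in the limit; the Picard--Lefschetz description of $\sigma_\gamma$ is what guarantees that the torus rank on the $\m{C}$-side is $1$ and pins down the rank $2g-1$ of $\Inv\pa{\sigma_\gamma}$.
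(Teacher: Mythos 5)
Your first paragraph is exactly how the paper begins: $\Inv\pa{\rho_\gamma}$ is identified, via the injective equivariant map $\mathfrak{H}_t$, with $\Im\mathfrak{H}_t\cap\Inv\pa{\sigma_\gamma}$, and $\Inv\pa{\sigma_\gamma}$ with the image of $\zeta\colon H^1\pa{J\pa{\m{C}_0},\mathbb{Z}}=H^1\pa{J\pa{\m{C}},\mathbb{Z}}\ra H^1\pa{J\pa{\m{C}_t},\mathbb{Z}}$ by Picard--Lefschetz (Remark \ref{rmk:piclef} takes care of the base change, since $\Inv\pa{T_{k\delta}}=\Inv\pa{T_\delta}$). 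The divergence, and the genuine gap, is in how you pass to the central fibre. You extend $\hat{\m{B}}$ to a semi-abelian scheme over $\Delta$ and assert that $\mathfrak{G}_0$ is surjective on integral $H^1$ because ``the closed immersion dual to $G$ persists, with its sub-semi-abelian structure, on the central fibre.'' That is precisely what does not follow: the N\'eron mapping property extends $\hat G$ to the semi-abelian models, but a monomorphism of abelian schemes over $\Delta^*$ need not specialize to a closed immersion, and exactness of N\'eron/semi-abelian models is known to fail in general (Raynaud). At the lattice level the issue is visible even granting your specialization isomorphisms: with $\Lambda=H_1\pa{J\pa{\m{D}_t},\mathbb{Z}}$, vanishing sublattice $W$, and $\Lambda_B=H_1\pa{\hat{\m{B}}_t,\mathbb{Z}}\subset\Lambda$, your claim amounts to the image of $\Lambda_B/\pa{\Lambda_B\cap W}\ra\Lambda/W$ being saturated, i.e.\ to $\Lambda_B+W$ being saturated in $\Lambda$; a sum of two saturated sublattices need not be saturated (take $\mathbb{Z}\pa{1,1}+\mathbb{Z}\pa{1,-1}\subset\mathbb{Z}^2$, of index $2$), and nothing in your argument rules this out. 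The possible failure is exactly a finite-index error, which is fatal here because the proposition asserts the exact integral equality $\Inv\pa{\rho_\gamma}=n\Inv\pa{\sigma_\gamma}$: if $\mathfrak{G}_0$ only has finite-index image, you get $\Im\mathfrak{H}_0\supsetneq\Im\hat F_0^*=nH^1\pa{J\pa{\m{C}_0},\mathbb{Z}}$ and your final chain breaks.

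The paper never degenerates $\m{B}$ at all, and this is how it dodges the problem you flagged. Since $\mathfrak{G}$ is by construction a surjection of local systems over $\Delta^*$, one has $\Im\mathfrak{H}_t=\Im\mathfrak{F}_t$ fibrewise; so the paper replaces $\Im\mathfrak{H}_t\cap\Inv\pa{\sigma_\gamma}$ by $\Im\mathfrak{F}_t\cap\Im\pa{\zeta}$, which involves only the two families of (generalized) Jacobians $J\pa{\m{D}}/\Delta$ and $J\pa{\m{C}}/\Delta$. Their central fibres are the generalized Jacobians of the nodal curves $\m{D}_0$ and $\m{C}_0$, over which Proposition \ref{prop:D0} gives complete integral control, and the intersection is identified with $\Im\pa{H^1\pa{\m{D}_0,\mathbb{Z}}\ra H^1\pa{J\pa{\m{C}_0},\mathbb{Z}}\xrightarrow{\zeta}H^1\pa{J\pa{\m{C}_t},\mathbb{Z}}}=n\Im\pa{\zeta}=n\Inv\pa{\sigma_\gamma}$ by Corollary \ref{cor:cohomology}. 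To repair your version you would have to prove the saturation of $H_1$ of the semi-abelian limit of $\hat{\m{B}}$ inside $H_1\pa{J\pa{\m{D}_0},\mathbb{Z}}$ in this specific geometric situation; the simpler fix is to trade $\mathfrak{H}$ for $\mathfrak{F}$ \emph{before} degenerating, as the paper does, so that the only semi-abelian limits needed are the Jacobians of the nodal curves already in hand.
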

\begin{proof}
By Proposition \ref{prop:picardlefsch}, Remark \ref{rmk:piclef} and \eqref{eq:restrcurve}, it follows that
\[
\Inv\pa{\sigma_\gamma}=\Im\pa{\zeta\colon H^1\pa{J\pa{\m{C}_0}, \mathbb{Z}}=H^1\pa{J\pa{\m{C}}, \mathbb{Z}} \ra H^1\pa{J\pa{\m{C}_t}, \mathbb{Z}}},
\]
where $\zeta$ is the map induced by the inclusion $J\pa{\m{C}_t}\hookrightarrow J\pa{\m{C}}$. This implies
\begin{align*}
\Inv\pa{\rho_\gamma}&= \Im\pa{H^1\pa{\m{B}_t, \mathbb{Z}} \xrightarrow{\mathfrak{H}_t}  H^1\pa{J\pa{\m{C}_t}, \mathbb{Z}} } \cap \Inv\pa{\sigma_\gamma}\\&=
\Im\pa{H^1\pa{\m{D}_t, \mathbb{Z}} \xrightarrow{\mathfrak{F}_t}  H^1\pa{J\pa{\m{C}_t}, \mathbb{Z}} } \cap \Inv\pa{\sigma_\gamma}\\&=
\Im\pa{H^1\pa{\m{D}_t, \mathbb{Z}} \xrightarrow{\mathfrak{F}_t}  H^1\pa{J\pa{\m{C}_t}, \mathbb{Z}} } \cap \Im\pa{\zeta}\\&=
\Im \pa{H^1\pa{\m{D}_0, \mathbb{Z}} \ra  H^1\pa{J\pa{\m{C}_0}, \mathbb{Z}} \xrightarrow{\zeta}  H^1\pa{J\pa{\m{C}_t}, \mathbb{Z}}  }\\ &=n\Im\pa{ H^1\pa{J\pa{\m{C}_0}, \mathbb{Z}} \xrightarrow{\zeta} H^1\pa{J\pa{\m{C}_t}, \mathbb{Z}} }\\&=n\Inv\pa{\sigma_\gamma},
\end{align*}
where the next to last identity follows from Corollary \ref{cor:cohomology}.
\end{proof}

\subsection{Double degeneration}
\label{subsec:doubledeg}

Let $P$ be a very generic irreducible stable curve of arithmetic genus $g\geq 5$, with exactly two nodes $N_1, N_2$ as singularities. Consider an open analytic neighborhood $U\subset \bar M_g^0$ of $[P]\in \bar M_g$ biholomorphic to a $\pa{3g-3}$-dimensional polydisk. Assume that $U$ has local coordinates $z_1,\ldots,z_{3g-3}$ centered at $[P]$ and such that the local equation of $\delta_0\cap U$ is $z_1\cdot z_2=0$. Furthermore, for $i=1,2$, $z_i=0$ is the local equation in $U$ of the locus $\delta^{i}_0$ where the singularity $N_i$ persists. We set $U':=U\setminus \delta_0$.

Let consider our initial families of isogenies $H\colon \m{B}/\m{V}\ra J\pa{\m{C}}/\m{V}$, defined in \eqref{eq:fattorizzazione}. We recall that $\pi\colon \m{V}\ra \m{U}$ is a finite étale covering of a dense open subset $\m{U}$ of $M^0_g$. Set $U^*:=U'\cap \m{U}$, $V^*:=\pi^{-1}\pa{U^*}$ and consider the restriction
\[
H\colon\m{B}/V^*\ra J\pa{\m{C}}/V^*.
\]
of $H$ to $V^*$. Then it holds:

\query{non serve prendere componente connessa perché il gruppo fondamentale non è banale, più o meno ha due generatori, forse più complicato}

\begin{prop}
For each $t\in V^*$, the abelian variety $B_t$ is isomorphic to $J\pa{\m{C}_t}$ and the map $H_t\colon B_t\ra J\pa{C_t}$ is the multiplication by a non-zero integer $n$.
\end{prop}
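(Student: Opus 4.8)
The plan is to set up a double degeneration and apply Proposition~\ref{prop:monodromydoubledegen} to the injective map of local systems $\mathfrak{H}\colon R^1\varphi_*\mathbb{Z}\hookrightarrow R^1\psi_*\mathbb{Z}$ over the base $U^*$. First I would fix a base point $t\in V^*$ and, using the local coordinates on $U$, choose two loops $\gamma_1,\gamma_2\in \pi_1(U^*,\pi(t))$ that are small loops around the two branches $\delta_0^1$ and $\delta_0^2$ of $\delta_0$, i.e. the loops $z_i\mapsto z_i e^{2\pi\sqrt{-1}\theta}$ for $i=1,2$ while the other coordinates are held fixed and away from the discriminant. Pulling these back through the finite �tale map $\pi\colon \m{V}\ra \m{U}$ (using Remark~\ref{rmk:monodromiaaperto}), and replacing them by suitable powers if needed so that they lift to loops in $V^*$, I obtain elements $\gamma_1,\gamma_2\in\pi_1(V^*,t)$. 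Each $\gamma_i$ corresponds to a one-parameter Lefschetz degeneration of $\m{C}$ in which the node $N_i$ is created, so $\sigma_{\gamma_i}$ is (a power of) the Picard--Lefschetz transvection $T_{\delta_i}$ associated to the vanishing cocycle $\delta_i$ of that node, by Proposition~\ref{prop:picardlefsch} and Remark~\ref{rmk:piclef}.

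Next I would verify the three hypotheses of Proposition~\ref{prop:monodromydoubledegen} for $\m{H}=R^1\varphi_*\mathbb{Z}$, $\m{G}=R^1\psi_*\mathbb{Z}$, with $G_i=\Inv(\sigma_{\gamma_i})$ and $H_i=\Inv(\rho_{\gamma_i})$. The crucial input is that each $\gamma_i$ defines precisely the kind of degeneration analyzed in Sections~\ref{subsec:Comparison of the extension classes} and~\ref{subsec:teomon}: the curve $C$ degenerates to $C_{p,q}$ by pinching the points lying over $N_i$. Thus Proposition~\ref{prop:specMonodromy}, applied to the restriction to the disk cut out by the $i$-th coordinate, yields exactly hypothesis~(3), namely $H_i=\Inv(\rho_{\gamma_i})=n_i\,\Inv(\sigma_{\gamma_i})=n_iG_i$ for some positive integer $n_i$. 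For hypothesis~(1), $G_1+G_2=\m{G}_x$, I would use the Picard--Lefschetz description: by Proposition~\ref{prop:picardlefsch}, $\Inv(T_{\delta_i})$ is the codimension-one sublattice orthogonal (under $\pairing$) to $\delta_i$, so $G_1+G_2=\m{G}_x$ as soon as $\delta_1$ and $\delta_2$ are not proportional, which holds because the two nodes of the very generic two-nodal curve $P$ give independent vanishing cocycles. Hypothesis~(2), $G_1\cap G_2\neq\set{0}$, is immediate once $g\geq 5$, since $G_1\cap G_2=\set{\delta_1,\delta_2}^{\perp}$ has rank $2g-2\geq 2>0$.

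With the three hypotheses in place, Proposition~\ref{prop:monodromydoubledegen} gives $n_1=n_2=:n$ and $\m{H}_t=n\,\m{G}_t$, that is, the image of $\mathfrak{H}_t\colon H^1(\m{B}_t,\mathbb{Z})\ra H^1(J(\m{C}_t),\mathbb{Z})$ is exactly $n\,H^1(J(\m{C}_t),\mathbb{Z})$. Since $\mathfrak{H}_t$ is injective of full rank, this identifies the Hodge structure (equivalently, the integral lattice with its polarization) of $\m{B}_t$ with that of $J(\m{C}_t)$ up to the scaling $\mathfrak{m}_n$; concretely, $H\colon \m{B}_t\ra J(\m{C}_t)$ induces on $H^1$ the inclusion $n H^1(J(\m{C}_t),\mathbb{Z})\hookrightarrow H^1(J(\m{C}_t),\mathbb{Z})$. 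Dualizing back to abelian varieties, this means $\m{B}_t\simeq J(\m{C}_t)$ and $H_t=\mathfrak{m}_n$, as claimed.

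The main obstacle I expect is the bookkeeping needed to guarantee that the two chosen loops $\gamma_1,\gamma_2$ genuinely lift to $V^*$ and that each realizes a degeneration of the type treated in Proposition~\ref{prop:specMonodromy} with the \emph{same} integer structure; in particular one must check that the very generic two-nodal curve $P$ can be reached, along each coordinate axis independently, by a path staying inside $\m{U}$ and that the point $[p-q]\in\Gamma'_C$ arising from each node is generic, so that the results of Section~\ref{subsec:Comparison of the extension classes} apply verbatim. Verifying the genericity of $[p-q]$ along both degenerations, and that the finite �tale base change does not spoil the relation $H_i=n_iG_i$ (only possibly replaces $\delta_i$ by a multiple, which is harmless by Remark~\ref{rmk:piclef}), is the delicate point.
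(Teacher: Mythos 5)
Your proposal is correct and follows essentially the same route as the paper: loops around the two boundary branches $\delta_0^1,\delta_0^2$, Picard--Lefschetz together with Proposition~\ref{prop:specMonodromy} to get $H_i=n_iG_i$, and Proposition~\ref{prop:monodromydoubledegen} to force $n_1=n_2$ and $H^1\pa{\m{B}_t,\mathbb{Z}}=nH^1\pa{J\pa{\m{C}_t},\mathbb{Z}}$; the paper produces the $\gamma_i$ only slightly more indirectly, by noting that since $\pi_1\pa{U^*,u}\ra\pi_1\pa{U',u}$ is surjective the monodromy group of $R^1\psi_*\mathbb{Z}$ has finite index in the group generated by $T_{a_1},T_{a_2}$, hence contains powers $T_{k_ia_i}$. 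One caveat: your justification of hypothesis~(1) is too weak as stated, since over $\mathbb{Z}$ non-proportionality of $\delta_1,\delta_2$ does not imply $\delta_1^{\perp}+\delta_2^{\perp}=\m{G}_x$ (take $\delta_1=a_1$, $\delta_2=a_1+2a_2$ in a symplectic basis: the sum has index two); what saves the argument, and what the paper actually uses, is that the two vanishing cocycles of the irreducible two-nodal curve $P$ are disjoint and extend to a common symplectic basis $\set{a_1,\ldots,a_g,b_1,\ldots,b_g}$ with $\delta_i=a_i$, which gives both $G_1+G_2=\m{G}_x$ and $G_1\cap G_2\neq\set{0}$ directly.
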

\begin{proof}
Let us restrict the local systems $R^1\varphi_*\mathbb{Z}$ and $R^1\psi_*\mathbb{Z}$, defined in Section \ref{subsec:teomon}, and their monodromy representations $\rho$ and $\sigma$, to $V^*$. The statement will follow from Proposition \ref{prop:monodromydoubledegen}.

Let $\omega\colon \m{C} \ra U'$ be the restriction of the universal family of curves to $U'$ and denote by $\ell$ the inclusion $U^* \hookrightarrow U'$. It holds $\pi^{-1}\ell^{-1}R^1\omega_*\mathbb{Z}=R^1\psi_*\mathbb{Z}$, where we recall that $\pi$ is the finite covering $\pi\colon V^* \ra U^*$. Set $u:=\pi\pa{t}$ and denote by $M\leq \Aut\pa{H^1\pa{J\pa{\m{C}_t}, \mathbb{Z}}}$ the monodromy group of the local system $R^1\psi_*\mathbb{Z}$ and by $L\leq \Aut\pa{H^1\pa{\m{C}_u, \mathbb{Z}}}=\Aut\pa{H^1\pa{J\pa{\m{C}_t}, \mathbb{Z}}}$ the monodromy group of the local system $R^1\omega_*\mathbb{Z}$. By Remark \ref{rmk:monodromiaaperto}, since $\pi_1\pa{U^*,u}\ra \pi_1\pa{U',u}$ is surjective, $M$ is a subgroup of finite index of $L$.

We consider a symplectic basis $\set{a_1,\ldots,a_g, b_1, \ldots, b_g}$ of $H^1\pa{\m{C}_u, \mathbb{Z}}$ such that, for $i=1,2$, $a_i$ is the vanishing cocycle for the Lefschetz degeneration centered in a point of $\delta^{i}_0$. We recall that, by Proposition \ref{prop:picardlefsch}, $L$ is generated by $T_{a_1}$ and $T_{a_2}$ (see \eqref{eq:autopicardlef} in Section \ref{sec:localsystems}). This implies that, for each $i=1,2$, there is an element $\gamma_i\in \pi_1\pa{V^*,t}$, such that $\sigma_{\gamma_i}=T_{k_ia_i}$ for some non-zero $k_i\in \mathbb{N}$. Thus, $\Inv\pa{\sigma_{\gamma_i}}$ is generated by $\set{a_1,\ldots,a_g, b_1, \ldots, b_g}\setminus \set{a_i}$ and, by Proposition \ref{prop:specMonodromy},
\[
\Inv\pa{\rho_{\gamma_i}}=n_i\Inv\pa{\sigma_{\gamma_i}},
\]
for some non-zero $n_i\in\mathbb{N}$. Proposition \ref{prop:monodromydoubledegen} implies $n_1=n_2$ and $H^1\pa{\m{B}_t, \mathbb{Z}}=n_1 H^1\pa{J\pa{\m{C}_t}, \mathbb{Z}}$.
\end{proof}

By considering a covering of $M^0_g$ of analytic open sets as $U'$, we have the following corollary.

\begin{cor}
\label{cor:moltn}
The abelian scheme $\m{B}/\m{V}$ is isomorphic to $J\pa{\m{C}}/\m{V}$ and the map $H\colon \m{B}/\m{V} \ra J\pa{\m{C}}/\m{V}$ is on each fibre the multiplication by a non-zero integer $n$.
\end{cor}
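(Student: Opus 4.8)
The plan is to globalize the preceding proposition, which already establishes the conclusion fibrewise over each local patch $V^*$, to an isomorphism of abelian schemes over all of $\m{V}$. The point is that the proof of that proposition yields more than a fibrewise statement: over each such $V^*$ it shows that the injective map of local systems $\mathfrak{H}\colon R^1\varphi_*\mathbb{Z}\ra R^1\psi_*\mathbb{Z}$ of \eqref{eq:mapls} has image exactly $n\, R^1\psi_*\mathbb{Z}$ for a single integer $n$ (this is the relation $H^1\pa{\m{B}_t,\mathbb{Z}}=n_1H^1\pa{J\pa{\m{C}_t},\mathbb{Z}}$ obtained there). Equivalently, the cokernel local system $\m{Q}:=R^1\psi_*\mathbb{Z}/\mathfrak{H}\pa{R^1\varphi_*\mathbb{Z}}$, which is defined on all of $\m{V}$ since $\mathfrak{H}$ is a globally defined injection of local systems of the same rank, satisfies $\m{Q}|_{V^*}\simeq \pa{\mathbb{Z}/n}^{2g}$.

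First I would check that the integer $n$ is independent of the chosen degeneration. Covering $M^0_g$ by analytic open sets of the type $U'$ and taking the associated $V^*\subset \m{V}$, each one carries the equality $\mathfrak{H}\pa{R^1\varphi_*\mathbb{Z}}=n\,R^1\psi_*\mathbb{Z}$ for some integer. Since $\m{Q}$ is a local system of finite abelian groups, the isomorphism type of its stalk $\m{Q}_t$ is locally constant; as $\m{V}$ is connected (replacing it by a connected component if necessary), $\m{Q}_t\simeq \pa{\mathbb{Z}/n}^{2g}$ for every $t\in \m{V}$, with one and the same $n$. By the theory of elementary divisors, a full-rank inclusion of lattices whose cokernel is $\pa{\mathbb{Z}/n}^{2g}$ must equal $n$ times the ambient lattice; hence $\mathfrak{H}\pa{R^1\varphi_*\mathbb{Z}}=n\,R^1\psi_*\mathbb{Z}$ holds on all of $\m{V}$.

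It follows that $\tfrac1n\mathfrak{H}\colon R^1\varphi_*\mathbb{Z}\ra R^1\psi_*\mathbb{Z}$ is an isomorphism of local systems. Since $\mathfrak{H}$ is induced by the isogeny $H$ (more precisely by its dual), it is a morphism of the associated polarizable integral variations of Hodge structure of weight one; being a rational multiple of such a morphism which is an integral isomorphism, $\tfrac1n\mathfrak{H}$ is itself an isomorphism of integral variations of Hodge structure. By the equivalence between abelian schemes over a smooth base and polarizable integral variations of Hodge structure of weight one (see Section \ref{subsec:familyofcurves}), this isomorphism descends to an isomorphism of abelian schemes $\m{B}\simeq J\pa{\m{C}}$ over $\m{V}$. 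Under this identification the previous proposition shows that $H_t$ is the multiplication by $n$ on every fibre, and a morphism of abelian schemes is determined by its action on the fibres, so $H=\mathfrak{m}_n$.

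The main obstacle is this last upgrade: the relation $\mathfrak{H}\pa{R^1\varphi_*\mathbb{Z}}=n\,R^1\psi_*\mathbb{Z}$ is, a priori, only a statement about lattices and monodromy, so one must be careful that $\tfrac1n\mathfrak{H}$ respects the Hodge filtrations in order to produce an honest isomorphism of abelian schemes rather than merely of the underlying local systems. This is precisely what is guaranteed by the fact that $\mathfrak{H}$ already comes from a morphism of abelian schemes, so that $\tfrac1n\mathfrak{H}$ automatically preserves the variation of Hodge structure; the constancy of $n$ via the connectedness of $\m{V}$ is the other point that must not be overlooked.
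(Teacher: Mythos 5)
Your proof is correct and follows essentially the same route as the paper, whose entire argument for this corollary is the one-line remark that the preceding proposition globalizes by considering a covering of $M^0_g$ by analytic open sets of type $U'$. You merely make explicit the details the paper leaves implicit --- the constancy of $n$ via the local system of cokernels on the connected base $\m{V}$, the elementary-divisor identification $\mathfrak{H}\pa{R^1\varphi_*\mathbb{Z}}=n\,R^1\psi_*\mathbb{Z}$, and the passage from an isomorphism of integral local systems back to an isomorphism of abelian schemes through the weight-one Hodge-theoretic equivalence.
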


\subsection{Conclusion of the proof of Theorem \ref{teo:main}}
\label{subsec:conclusion}

Let consider the surjective map of abelian schemes
\[
G\colon J\pa{\m{D}}/\m{V}\ra \m{B}/\m{V}
\]
defined in \eqref{eq:fattorizzazione}. The aim of this section is to show that $G$ is an isomorphism. This concludes the proof of Theorem \ref{teo:main}.

\begin{rmk}
By Corollary \ref{cor:moltn}, $\ker G= 0$ implies that $J\pa{\m{D}}$ is isomorphic to $J\pa{\m{C}}$ and $F\colon J\pa{\m{D}} \ra J\pa{\m{C}}$ is the multiplication by $n$. We recall that, given a very generic point $t\in \m{V}$, we can assume $\NS\pa{J\pa{\m{C}_t}}=\mathbb{Z}$. This implies that $J\pa{\m{D}_t}$ is isomorphic to $J\pa{\m{C}_t}$ as a principally polarized abelian variety and, by Torelli theorem (see \cite{andreotti}) $\m{D}_t\simeq \m{C}_t$. It follows that the two families of curves $\m{D}$ and $\m{C}$ are isomorphic. Thus, when $g\pa{\m{D}_t}=g\pa{\m{C}_t}$, we recover the result in \cite{artPirola}.
\end{rmk}

The surjective map $G\colon J\pa{\m{D}}\ra \m{B}\simeq J\pa{C}$ induces an injective map of linear systems $R^1\psi_*\mathbb{Z} \hookrightarrow R^1\mu_*\mathbb{Z}$ (cf. Section \ref{subsec:teomon}). Thus $R^1\psi_*\mathbb{Z}$ has two different polarizations: the natural polarization $\Theta$ induced by the intersection of cycles on $\m{C}$, and the polarization $\Xi$ inherited by $R^1\mu_*\mathbb{Z}$ endowed with the polarization induced by the intersection of cycles on $\m{D}$ (see Section \ref{subsec:familyofcurves}).

In the following proposition we show that the two polarizations coincide. This implies that, given $t\in \m{V}$, the exact sequence of polarized abelian varieties
\[
0\ra J\pa{\m{C}_t}\ra J\pa{\m{D}_t}\ra J\pa{\m{D}_t}/J\pa{\m{C}_t}\ra 0,
\]
induced by the dual map $\hat G_t\colon J\pa{\m{C}_t}\ra J\pa{\m{D}_t}$, splits (see e.g. \cite[Corollary 5.3.13]{BL}). Since $\m{D}_t$ is an irreducible curve, the theta divisor of $J\pa{\m{D}_t}$ is irreducible and $\ker G_t=0$.

\begin{prop}
The polarizations $\Theta$ and $\Xi$ of $R^1\psi_*\mathbb{Z}$ coincide.
\end{prop}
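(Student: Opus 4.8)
The plan is to reduce the proposition to the claim that a single positive integer equals $1$, and to extract that integer from the Lefschetz degeneration of Section \ref{subsec:Comparison of the extension classes}. At a very generic $t\in\m{V}$ we have $\NS\pa{J\pa{\m{C}_t}}=\mathbb{Z}$, so the two polarizations of $R^1\psi_*\mathbb{Z}$ are proportional over $\mathbb{Q}$; since $\Theta$ is unimodular (it is the intersection form of a smooth curve) and $\Xi$ is integral, we obtain $\Xi=m\Theta$ for a positive integer $m$, locally constant on $\m{V}$. Denote by $\iota\colon R^1\psi_*\mathbb{Z}\hookrightarrow R^1\mu_*\mathbb{Z}$ the injection induced by $G$, so that $\Xi\pa{u,v}=\langle\iota\pa{u},\iota\pa{v}\rangle_{\m{D}}$ with $\langle\cdot,\cdot\rangle_{\m{D}}$ principal. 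It then suffices to exhibit one symplectic pair on which $\Xi$ takes the value $1$.

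First I would restrict all the local systems to the disk $\Delta$ of Section \ref{subsec:Comparison of the extension classes}, chosen so that $\m{C}\ra\Delta$ is a Lefschetz degeneration with central fibre $C_{p,q}$. Let $\delta\in H^1\pa{J\pa{\m{C}_t},\mathbb{Z}}$ be the vanishing cocycle and $\beta$ a symplectic dual with $\Theta\pa{\delta,\beta}=1$. By Proposition \ref{prop:picardlefsch} the monodromy of $R^1\psi_*\mathbb{Z}$ is $\sigma_\gamma=T_\delta$, whereas the monodromy $\nu_\gamma$ of $R^1\mu_*\mathbb{Z}$ is a product $\prod_i T_{\delta_i}^{k_i}$ of commuting Picard--Lefschetz transvections, one for each node of $\m{D}_0$, along the vanishing cocycles $\delta_i$. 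Being a morphism of local systems, $\iota$ is monodromy-equivariant, that is $\nu_\gamma\circ\iota=\iota\circ\sigma_\gamma$.

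The heart of the matter is to identify $\iota\pa{\delta}$. By Proposition \ref{prop:D0} the torus of $J\pa{C_{p,q}}$ maps under $\hat F_0$ into the first torus factor of $J\pa{\m{D}_0}$ by $t\mapsto t^n$; since by Corollary \ref{cor:moltn} the factor $n$ is carried entirely by $H=\mathfrak{m}_n$, the map $G$ is primitive along that factor and $\iota\pa{\delta}=\delta_1$, the first vanishing cocycle of the $\m{D}$-degeneration. Evaluating equivariance on $\beta$ gives, on one side, $\iota\pa{\sigma_\gamma\beta}=\iota\pa{\beta}+\delta_1$, and on the other $\nu_\gamma\pa{\iota\beta}=\iota\pa{\beta}+\sum_i k_i\langle\delta_i,\iota\beta\rangle_{\m{D}}\,\delta_i$. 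Comparing the coefficients of the linearly independent $\delta_i$ yields $k_1\langle\delta_1,\iota\beta\rangle_{\m{D}}=1$, whence $k_1=1$ and $\langle\delta_1,\iota\beta\rangle_{\m{D}}=1$. Therefore
\[
m=\Xi\pa{\delta,\beta}=\langle\iota\delta,\iota\beta\rangle_{\m{D}}=\langle\delta_1,\iota\beta\rangle_{\m{D}}=1,
\]
so that $\Theta=\Xi$.

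The step I expect to be the main obstacle is exactly the identification $\iota\pa{\delta}=\delta_1$ with coefficient precisely $1$: a coefficient $c$ would propagate to $m=c^2/k_1$, so everything hinges on $c=1$. This requires reading Proposition \ref{prop:D0} as a statement about $G^*$ on the weight-zero (torus-character) part of the cohomology of the central fibre, being careful with the direction of the character--cocharacter duality and checking that the integer $n$ is absorbed by $H$ so that no extra divisibility survives for $G$. A minor point to record is that any base change used to extend $F$ over the origin rescales $\sigma_\gamma$ and all the $k_i$ by the same factor, hence leaves the intrinsic ratio $m$ unchanged, so the computation may be performed in the transversal model where $\sigma_\gamma=T_\delta$.
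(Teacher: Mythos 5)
Your framework (reduce to $\Xi=m\Theta$ via $\NS\pa{J\pa{\m{C}_t}}=\mathbb{Z}$, then evaluate on one symplectic pair) is sound, and the equivariance identity $\nu_\gamma\circ\iota=\iota\circ\sigma_\gamma$ is correct; but the step you yourself flag as the crux, $\iota\pa{\delta}=\delta_1$, is a genuine gap, and in general it is false as stated. Proposition \ref{prop:D0} does \emph{not} say that the character map of $\hat G_0$ is concentrated on a single node: its proof first shows that on the basis of $H_1\pa{\Gamma,\mathbb{Z}}$ coming from the nodes (Proposition \ref{prop:jacob}) each value of $\chi$ is $0$ or $\pm n$, and only \emph{after a change of basis of} $H_1\pa{\Gamma,\mathbb{Z}}$ normalizes to $\chi\pa{e_1}=n$, $\chi\pa{e_i}=0$. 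That new basis has no reason to correspond to individual nodes, whereas the transvections $T_{\delta_i}$ in $\nu_\gamma$ are attached to the actual nodes of $\m{D}_0$. What Propositions \ref{prop:D0} and Corollary \ref{cor:moltn} really give is only that $\iota\pa{\delta}$ is a primitive vector of the weight-zero sublattice $H^1\pa{\Gamma,\mathbb{Z}}\subset H^1\pa{J\pa{\m{D}_t},\mathbb{Z}}$; in node coordinates it can be a signed sum $\sum_j\epsilon_j\delta_j$ over several nodes. Moreover the $\delta_j$ need not be linearly independent (they span a lattice of rank $b_1\pa{\Gamma}$, one class per \emph{edge}, and separating nodes give $\delta_j=0$), so ``comparing the coefficients of the linearly independent $\delta_i$'' is not available. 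Concretely, if two nodes of $\m{D}_0$ both carry the class $\pm n[p-q]$, your identity becomes $k\,\iota\pa{\delta}=\sum_j k_j\langle\delta_j,\iota\beta\rangle\,\delta_j$ with $\iota\pa{\delta}=\delta_1+\delta_2$, whence $m=\langle\iota\delta,\iota\beta\rangle=k/k_1+k/k_2$, and nothing pins $m$ to $1$. The closing remark that one may ``perform the computation in the transversal model where $\sigma_\gamma=T_\delta$'' does not repair this: extending $G$ over the origin (hence obtaining the central-fibre data of Proposition \ref{prop:D0} at all) may genuinely require the base change, and before base change $\nu_\gamma$ need not be a product of transvections; so the multiplicities $k$, $k_j$ must be tracked, not normalized away.

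The paper's own proof avoids the torus direction entirely, and this is the instructive contrast. It restricts both polarizations to the rank-$\pa{2g-2}$ sublattice $L$, the image of $H^1(\tilde{\m{C}_0},\mathbb{Z})$ in $H^1\pa{J\pa{\m{C}_t},\mathbb{Z}}$, i.e.\ to classes coming from the \emph{compact part} of the central fibre. There, by Proposition \ref{prop:norm} and Corollary \ref{cor:moltn}, the map induced by $G_0$ on the compact quotients is the plain first projection $J\pa{C}\times J\pa{D_2}\times\cdots\times J\pa{D_k}\ra J\pa{C}$, which is isometric for the intersection forms since the polarization of $H^1(\tilde{\m{D}_0},\mathbb{Z})$ is the orthogonal direct sum over the components; hence $\Theta_t|_L=\Xi_t|_L$ with no node combinatorics and no base-change multiplicities, and $\NS\pa{J\pa{\m{C}_t}}=\mathbb{Z}$ then forces $\Theta_t=\Xi_t$. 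Your computation instead evaluates on the pair $\pa{\delta,\beta}$, where $\beta$ is exactly the direction \emph{not} coming from the central fibre, which is why the limit data gives you no direct hold on $\langle\iota\delta,\iota\beta\rangle$. If you want to salvage your route, replace $\pa{\delta,\beta}$ by a symplectic pair inside $L$ and argue as the paper does; as written, the identification $\iota\pa{\delta}=\delta_1$ and the coefficient extraction both fail.
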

\begin{proof}
Let $t$ be a very generic point in $\m{V}$ and let $\set{a_1,\ldots, a_g,b_1,\ldots,b_g}$ be a basis of $H^1\pa{J\pa{\m{C}_t},\mathbb{Z}}$ that is symplectic with respect to $\Theta_t$. Consider a Lefschetz degeneration $\m{C}/\Delta$ of $\m{C}_t$ such that $a_1$ is the vanishing cocycle. We restrict $J\pa{\m{D}}$ to $\Delta^*$ and, arguing as in Section \ref{subsec:Comparison of the extension classes}, up to a base change of $\Delta$, we get the limit
\[
G\colon J\pa{\m{D}}/\Delta \ra J\pa{\m{C}}/\Delta
\]
of the map $G$ when $\m{C}_t$ degenerates to a nodal curve. Notice that, by Corollary \ref{cor:moltn}, the map of semi-abelian varieties $F_0\colon J\pa{\m{D}_0}\ra J\pa{\m{C}_0}$ is the composition of the map $G_0\colon J\pa{\m{D}_0}\ra J\pa{\m{C}_0}$ with the multiplication by $n$.

Through the natural map
\[
H^1(\tilde{\m{C}_0},\mathbb{Z}) \hookrightarrow H^1\pa{\m{C}_0,\mathbb{Z}} = H^1\pa{\m{C},\mathbb{Z}} \ra H^1\pa{\m{C}_t,\mathbb{Z}}
\]
we identify $H^1(\tilde{\m{C}_0},\mathbb{Z})$ with the sub-lattice $L$ of $H^1\pa{J\pa{\m{C}_t},\mathbb{Z}}$ generated by the elements $\set{a_2,\ldots, a_g,b_2,\ldots,b_g}$. In this way, $\Theta_t|_{L}$ is the polarization induced by the intersection of cycles on $\tilde{\m{C}_0}$ and $\Xi_t|_{L}$ is the polarization inherited by $H^1(\tilde{\m{D}_0},\mathbb{Z})$ (endowed with the polarization induced by the intersection of cycles on $\tilde{\m{D}_0}$) through the inclusion $H^1(\tilde{\m{C}_0},\mathbb{Z}) \hookrightarrow H^1(\tilde{\m{D}_0},\mathbb{Z})$. By Proposition \ref{prop:norm},
\[
J(\tilde{\m{D}_0})\simeq J\pa{C}\times J\pa{D_2}\times\ldots\times J\pa{D_k}
\]
and the map $J(\tilde{\m{D}_0})\ra J\pa{\tilde{\m{C}_0}}$, induced by $G_0\colon J({\m{D}_0})\ra J\pa{{\m{C}_0}}$ on the compact quotient, is the first projection. Thus $\Theta_t|_{L}=\Xi_t|_{L}$. For a very generic $t\in \m{V}$, we can assume $\NS\pa{J\pa{C_t}}=\mathbb{Z}$ and $\Xi_t=n\Theta_t$ for some $n\in \mathbb{N}$. This implies $\Theta_t=\Xi_t$.
\end{proof}

\section{Curves of genus $2g-2$ on a Jacobian of dimension $g$.}

It is natural to ask whether the bound given in Theorem \ref{teo:main} is sharp. Notice that, if $p>2g-2$, it is possible to give examples of non-constant maps $\Omega \ra J$, from a curve of genus $p$ to a Jacobian of dimension $g$. Namely, it is always possible to find a finite covering of genus $p$ of a curve of genus $g$.

The following conjecture was suggested to us by Gian Pietro Pirola.

\begin{conj}
\label{conj}
There are no curves of geometric genus $2g-2$ lying on a very generic Jacobian of dimension $g\geq 4$.
\end{conj}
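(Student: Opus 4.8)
The plan is to run the architecture of the proof of Theorem~\ref{teo:main} in the boundary case $g\pa{D}=2g-2$, isolating and then repairing the single step where that argument breaks down. Assuming, for contradiction, a non-constant surjection $f\colon J\pa{D}\ra J\pa{C}$ with $J\pa{C}$ very generic of dimension $g\geq 4$, I would factor $f=H\circ G$ as in \eqref{eq:fattorizzazione}, so that $A:=\ker\pa{G}$ has dimension $g-2>0$ and, by Proposition~\ref{prop:bir}, the induced map $\varphi\colon D\ra A$ is birational onto its image; in particular $\varphi$ is non-constant. The cases $g=4,5$ being already settled by the explicit deformation analysis, the task is to produce a contradiction for every $g\geq 6$, where Proposition~\ref{prop:bir} is available through the Prym non-isogeny result \cite{PrymVP}.

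First I would repeat the degeneration $C\rightsquigarrow C_{p,q}$ of Section~\ref{subsec:Comparison of the extension classes}, together with its dual, and try to reproduce Proposition~\ref{prop:norm}. Everything goes through \emph{except} its Step~I (rigidity): there one applies Corollary~\ref{cor:xiao} to the family of normalizations $\tilde{\m{D}_0}$ of the central fibre, whose relevant component $X$ satisfies $g\pa{X}\leq p-1=2g-3$, and this meets the bound $2\pa{g-1}-1=2g-3$ of Theorem~\ref{teo:xiao} with \emph{equality} rather than strict inequality. My plan is to supply the missing rigidity by hand in this extremal case: I would use that Theorem~\ref{teo:xiao} \emph{does} apply to the map $D\ra J\pa{C}$ itself (since $2g-2<2g-1$), so that $D$ is rigid in the fixed very generic $J\pa{C}$ modulo translation, and combine this with the birationality of $\varphi$ to show that the single potentially-unfrozen deformation direction of $X$ cannot actually occur. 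Granting this, the extension-class comparison of Proposition~\ref{prop:D0} and Corollary~\ref{cor:cohomology} goes through, the double degeneration of Section~\ref{subsec:doubledeg} forces $\m{B}\simeq J\pa{C}$ with $H=\mathfrak{m}_n$, and the polarization comparison of Section~\ref{subsec:conclusion} forces $\ker G=0$; but $\dim\ker G=\dim A=g-2>0$, a contradiction, so no such $D$ exists.

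\textbf{The main obstacle} is precisely the extremal rigidity at $g\pa{X}=2\pa{g-1}-1$: the bound of Theorem~\ref{teo:xiao} gives nothing, and a priori $\tilde{\m{D}_0}$ may acquire exactly one extra modulus as $[p-q]$ varies in $\Gamma'_C$. I expect that closing this gap requires a second-order argument---an infinitesimal-variation-of-Hodge-structure, second-fundamental-form computation in the spirit of the methods underlying Theorem~\ref{teo:xiao}---controlling that one extra deformation direction, rather than any purely dimension-theoretic count. An alternative route, worth pursuing in parallel, is to bypass the degeneration entirely and analyse directly the curve $\varphi\pa{D}$, of geometric genus $2g-2=2\dim A+2$ inside the $(g-2)$-dimensional $A$, showing that it cannot deform as $C$ varies in $M_g$ without $A$ becoming isogenous to a Jacobian, which \cite{PrymVP} excludes for a very generic $A$ of ramified-Prym type; the difficulty there is pinning down the isogeny type of $A$ precisely enough to invoke that non-isogeny statement.
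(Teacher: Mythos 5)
The statement you are asked to prove is, in the paper, a \emph{conjecture}, not a theorem: the paper proves it only for $g=4,5$ (Theorem \ref{teo:conj45}) and explicitly leaves the general case open. Your proposal correctly reconstructs the reduction that the paper itself makes in its closing remarks --- namely, that the whole machinery of Theorem \ref{teo:main} (degeneration to $C_{p,q}$, comparison of extension classes, double degeneration, polarization comparison, and then $\ker G=0$ contradicting $\dim \m{A}=g-2>0$) goes through verbatim except for Step I of Proposition \ref{prop:norm}, where Corollary \ref{cor:xiao} is invoked with $n=g-1$ and the component $X$ satisfies only $g\pa{X}\leq 2g-3=2\pa{g-1}-1$, meeting the bound of Theorem \ref{teo:xiao} with equality. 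But your repair of this step does not work as described, and you yourself concede it with the words ``Granting this''. Rigidity of $\pa{D,f}$ inside a \emph{fixed} $J\pa{C}$ (which Theorem \ref{teo:xiao} does give, since $2g-2<2g-1$) says nothing about the family of boundary normalizations: as $[p-q]$ varies in $\Gamma'_C$, the ambient objects are the \emph{distinct} semi-abelian varieties $J\pa{C_{p,q}}$, not a fixed Jacobian, so fixed-target rigidity in the interior fibres does not constrain the variation of $X$ at the boundary. The statement actually needed is precisely the extremal case $g\pa{D}\leq 2n-1$ of Theorem \ref{teo:xiao}, which the paper identifies as the missing ingredient and proves only for $n=4$ (Proposition \ref{prop:minixiao}, via the Bardelli--Ciliberto--Verra analysis of genus-$7$ curves on $4$-dimensional abelian varieties), yielding the case $g=5$. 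For $g\geq 6$ your argument is therefore a reduction to an open rigidity problem, not a proof; your hoped-for ``second-order / infinitesimal-variation'' argument is exactly what is absent from the literature and from the paper.

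Your parallel alternative route also falls short of a contradiction. Analysing $\varphi\pa{D}\subset A$ and invoking the Prym non-isogeny theorem of \cite{PrymVP} is essentially the content of Proposition \ref{prop:bir}, and it yields only that $L_t$ is birational on its image. To convert that into a contradiction the paper needs the parameter count of Proposition \ref{prop:def}: an abelian variety $A$ of dimension $n=g-2$ would have to carry a family of dimension $w=\pa{3g-3}-n\pa{n+1}/2$ of birationally embedded curves of genus $2g-2$, and $w$ exceeds the bound $2g-5$ of Proposition \ref{prop:def} only for $g=4,5$ (e.g.\ for $g=6$ one has $w=5\leq 7$, so no contradiction). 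So neither branch of your proposal closes the case $g\geq 6$, which is consistent with the fact that the paper states the result as Conjecture \ref{conj} and supports it, rather than proving it, beyond $g=5$.
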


If the conjecture would be false, as in the previous case (see \eqref{eq:mappaF} in Section \ref{sec:mainTheorem}), we would find a map of families
\begin{equation*}
F\colon J\pa{\m{D}}/\m{V} \ra J\pa{\m{C}}/\m{V},
\end{equation*}
and a family of abelian varieties $\m{B}/\m{V}$ such that $F$ can be factorized in two morphisms (see \eqref{eq:fattorizzazione})
\begin{equation*}
G\colon J\pa{\m{D}} \ra \m{B} \qquad H\colon \m{B} \ra J\pa{\m{C}},
\end{equation*}
where $G$ has connected fibres and $H$ is a finite morphism. In this case, the abelian scheme $\m{A}:=\ker G$ would be a family of abelian varieties of dimension $g-2$ with a natural inclusion $I\colon \m{A} \hookrightarrow J\pa{\m{D}}$.

Let consider the dual map
\[
\hat I \colon J\pa{\m{D}}\simeq \hat {J\pa{\m{D}}} \ra \hat {\m{A}}
\]
and its composition with the Abel map
\begin{equation}
\label{eq:L}
L\colon \m{D} \ra \hat {\m{A}}.
\end{equation}
\begin{prop}
\label{prop:bir}
For a generic $t\in \m{V}$, the map $L_t\colon \m{D}_t \ra \hat {\m{A}}_t$ is birational on the image.
\end{prop}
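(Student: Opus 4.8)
The plan is to argue by contradiction, assuming that $L_t$ fails to be birational onto its image for very generic $t$, and to produce a ramified double covering whose Prym variety is isogenous to the Jacobian $J\pa{\m{C}_t}$, contradicting the result of \cite{PrymVP}. First I would record that, since $\hat I$ is surjective, the homomorphism $L_{t,*}\colon J\pa{\m{D}_t}\ra \hat{\m{A}}_t$ induced by the Abel map coincides with $\hat I_t$, so $L_t\pa{\m{D}_t}$ generates $\hat{\m{A}}_t$. If $L_t$ is not birational onto its image, then it factors as $\m{D}_t\xrightarrow{\eta_t} D'_t\hookrightarrow \hat{\m{A}}_t$, where $D'_t$ is the normalization of $L_t\pa{\m{D}_t}$, the map $D'_t\ra \hat{\m{A}}_t$ is birational onto its image, and $\deg \eta_t\geq 2$; moreover these data fit into a family over a finite cover of $\m{V}$. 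Because $D'_t$ generates $\hat{\m{A}}_t$ we have $g\pa{D'_t}\geq \dim \hat{\m{A}}_t=g-2$.

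Next I would identify the relevant abelian subvarieties. Since $L_t$ factors through $\eta_t$, the Abel-map compatibility $\alpha_{D'_t}\circ \eta_t=\mathrm{Nm}_{\eta_t}\circ \alpha_{\m{D}_t}$ shows that $\hat I_t$ factors through the norm $\mathrm{Nm}_{\eta_t}\colon J\pa{\m{D}_t}\ra J\pa{D'_t}$, and hence kills $K_t:=\ker\pa{\mathrm{Nm}_{\eta_t}}^0$. On the other hand $\ker\pa{\hat I_t}^0$ is the complementary abelian subvariety of $\m{A}_t$ in $J\pa{\m{D}_t}$, so it is isogenous to $J\pa{\m{C}_t}$, which is simple because $\m{C}_t$ is very generic. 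As $K_t\subseteq \ker\pa{\hat I_t}^0$, either $K_t=0$ or $K_t\sim J\pa{\m{C}_t}$; the first case forces $\eta_t$ to be trivial, since $g\pa{D'_t}<g\pa{\m{D}_t}$ for a non-trivial covering by Riemann--Hurwitz. Thus $K_t\sim J\pa{\m{C}_t}$ has dimension $g$, which forces $g\pa{D'_t}=g-2$ and hence $\hat{\m{A}}_t\sim J\pa{D'_t}$. Riemann--Hurwitz now bounds $\deg \eta_t$, and for $g\geq 7$ it gives $\deg \eta_t=2$; in this case $K_t=\mathrm{Prym}\pa{\m{D}_t/D'_t}$ is the Prym variety of a ramified double covering with six branch points, of dimension $g\geq 4$, and it is isogenous to $J\pa{\m{C}_t}$.

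Finally I would vary $t$ to obtain the contradiction. The coverings $\eta_t$ define a morphism from a finite cover of $\m{V}$ to the moduli space $\m{R}$ of ramified double coverings with $g\pa{\m{D}_t}=2g-2$ and $g\pa{D'_t}=g-2$, whose dimension is $3\pa{g-2}-3+6=3g-3$. Composing with the Prym map sends a very generic $t$ to the class of $\mathrm{Prym}\pa{\m{D}_t/D'_t}\sim J\pa{\m{C}_t}$; as $t$ varies over $\m{V}$, which dominates $M^0_g$, these sweep out a $\pa{3g-3}$-dimensional family of isogeny classes of very generic Jacobians. A dimension count (using $\dim \m{R}=3g-3=\dim M^0_g$) then shows that the image in $\m{R}$ is $\pa{3g-3}$-dimensional, hence dense, so for very generic $t$ the variety $\mathrm{Prym}\pa{\m{D}_t/D'_t}$ is a very generic Prym of a ramified double covering of dimension $\geq 4$. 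Since it is isogenous to the Jacobian $J\pa{\m{C}_t}$, this contradicts \cite{PrymVP} and proves that $L_t$ is birational onto its image.

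The hard part will be the genericity argument of the last paragraph: to invoke \cite{PrymVP} one must guarantee that the family of Pryms produced is very generic in $\m{R}$, which is precisely what the dimension count and the dominance over very generic Jacobians provide, but making this rigorous requires controlling how the coverings $\eta_t$ deform with $t$. A secondary difficulty is the reduction to $\deg \eta_t=2$: Riemann--Hurwitz only forces this for $g\geq 7$, so the low-genus values $g\in\set{4,5,6}$, where higher-degree coverings are a priori possible, require a separate treatment.
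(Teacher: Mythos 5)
Your overall route coincides with the paper's: factor $L_t$ through the normalization $D'_t$ of its image, use the surjectivity of $\hat I_t$ and the simplicity of $J\pa{\m{C}_t}$ to pin down $g\pa{D'_t}$, identify the connected kernel of the norm map with a Prym variety isogenous to $J\pa{\m{C}_t}$, and derive a contradiction from \cite[Theorem 1.2]{PrymVP} via a dimension count on the space of coverings. Your handling of the genus of $D'_t$ is in fact slightly cleaner than the paper's: the paper obtains $g-2\leq q \leq g-1$ from Riemann--Hurwitz and kills the case $q=g-1$ by a separate non-simplicity argument, whereas your observation that $K_t=\ker\pa{\mathrm{Nm}_{\eta_t}}^0$ is an abelian subvariety of the simple variety $\ker\pa{\hat I_t}^0\sim J\pa{\m{C}_t}$ forces $\dim K_t=g$, hence $q=g-2$, in one stroke. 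The final genericity step (the image in the space of coverings is full-dimensional because countably many principally polarized abelian varieties lie in each isogeny class, so a very generic $t$ yields a very generic Prym) is also the paper's argument, at a comparable level of detail.

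The genuine gap is the one you flagged yourself: the reduction to $\deg \eta_t=2$. Riemann--Hurwitz alone gives $d\leq \pa{4g-6}/\pa{2g-6}$, which is less than $3$ only for $g\geq 7$, so your proof as written does not cover $g=4,5,6$ --- and these are precisely the cases in which Proposition \ref{prop:bir} is applied later (Theorem \ref{teo:conj45} concerns $g=4,5$). The missing idea is hidden in the paper's phrase ``by a count of moduli'': the dimension count you postpone to the last paragraph must be applied \emph{before} fixing the degree. Since $\m{V}$ dominates the $\pa{3g-3}$-dimensional $M^0_g$, and a fixed covering has a fixed Prym while each isogeny class contains only countably many principally polarized abelian varieties, the assignment $t\mapsto \sq{\ell_t}$ has countable fibres, so the coverings must vary in at least $3g-3$ moduli; but degree-$d$ coverings of a genus-$\pa{g-2}$ curve with $r$ branch points form a space of dimension $3\pa{g-2}-3+r=3g-9+r$, whence $r\geq 6$. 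On the other hand Riemann--Hurwitz gives $4g-6\geq d\pa{2g-6}+r$, and for $d\geq 3$ this forces $r\leq 12-2g\leq 4$ when $g\geq 4$, a contradiction. Thus $d=2$ and $r=6$ for \emph{all} $g\geq 4$ (the same countability argument also excludes the degenerate alternative that the family $\m{D}/\m{V}$ is isotrivial). Adding this count to your argument closes the gap and recovers the paper's proof in full.
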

\begin{proof}
The map of curves $L_t\colon \m{D}_t \ra W_t$ factors through $\ell_t \colon  \m{D}_t \ra \tilde{W_t}$, where $W_t:=L_t\pa{\m{D}_t}$. Up to a restriction of $\m{V}$, we can suppose that the degree of $\ell_t$, the number of branch points of $\ell_t$ and the geometric genus of $W_t$ (denoted, respectively, by $d$, $r$ and $q$) do not depend on $t\in \m{V}$. We recall that $g\pa{\m{D}_t}=2g-2$, $g\pa{\m{C}_t}=g$ and $\hat I$ is surjective on each fibre. By Riemann-Hurwitz formula, either $L_t$ is birational on the image or $g-2\leq q\leq g-1$.

Assume $q=g-2$.
By a count of moduli, either the family $\m{D}/\m{V}$ is trivial, or $d=2$ and $r=6$. In the second case $\hat{\m{B}_t}=\ker \hat I$, and consequently $J\pa{\m{C}_t}$, is isogenous to the Prym variety of the ramified double covering $\ell_t \colon  \m{D}_t \ra \tilde{W_t}$. The dimension of the moduli space of the double coverings of a curve of genus $g-2$ with $6$ branch points is $3g-3$. It follows that, in this case, the dimension of the Prym locus is equal to the dimension of the Jacobian locus. By \cite[Theorem 1.2]{PrymVP}, a very generic Prym variety of dimension at least $4$ is not isogenous to a Jacobian. This yields a contradiction.

If $q=g-1$, then $\ker\pa{J\pa{\m{D}_t}\ra J\pa{W_t}}$ contains an abelian subvariety $S_t$ of $J\pa{\m{D}_t}$ of dimension $g-1$. Since $S_t$ is contained in $\hat G(\hat {\m{B}}_t)$, then $\hat {\m{B}}_t$, and consequently $J\pa{\m{C}_t}$, is not simple. Thus we get a contradiction.


\end{proof}

We want to show that the conjecture is true when $g=4,5$. To this end, we need the following result on the number of parameters of curves lying on an abelian variety. When the dimension of the abelian variety is greater than $2$ this is an improvement of the estimate in \cite[Proposition 2.4]{Parameters}.

\begin{prop}
\label{prop:def}
Let $X$ be an irreducible subvariety of $M_g$ whose points parameterize curves of geometric genus $g\geq 3$ on an abelian variety $A$ of dimension $n\geq 2$. Then $\dim X\leq g-2$ if $A$ is a surface and $\dim X \leq g-3$ if $n\geq 3$.
\end{prop}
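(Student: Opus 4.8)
The plan is to bound $\dim X$ by a deformation-theoretic computation and then to reduce the whole statement to a lower bound for the rank of a multiplication map of canonical sections. Let $[X]$ be a general point of $X$, so that $X$ is a smooth curve of genus $g$ equipped with a non-constant morphism $\varphi\colon X\ra A$; after replacing $A$ by the abelian subvariety generated by $\varphi\pa{X}$ we may assume $\varphi$ is nondegenerate, so that the codifferential $\varphi^*\colon H^0\pa{A,\Omega^1_A}\ra H^0\pa{X,K_X}$ is injective and $n=\dim A$. Since $A$ is fixed, the first-order deformations of the pair $\pa{X,\varphi}$ are the first hypercohomology of the two-term complex $[T_X\xrightarrow{d\varphi}\varphi^*T_A]$, and the forgetful map to $H^1\pa{X,T_X}=T_{[X]}M_g$ has image exactly $\ker\pa{\kappa\colon H^1\pa{X,T_X}\ra H^1\pa{X,\varphi^*T_A}}$, its fibres being torsors under $H^0\pa{X,\varphi^*T_A}$, which parameterizes translations of $A$ and hence does not move $[X]$ in $M_g$ (here $H^0\pa{X,T_X}=0$ as $g\ge2$). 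Thus $\dim X\le\dim\ker\kappa=\pa{3g-3}-\rk\kappa$.

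Next I would identify $\kappa$ explicitly. As $T_A$ is trivial of rank $n$ we have $\varphi^*T_A\simeq\fascio{X}^{\oplus n}$, so $H^1\pa{X,\varphi^*T_A}\simeq H^1\pa{X,\fascio{X}}^{\oplus n}$, and by Serre duality $\kappa$ is dual to the multiplication map
\[
\mu\colon H^0\pa{A,\Omega^1_A}\otimes H^0\pa{X,K_X}\ra H^0\pa{X,2K_X},\qquad \omega\otimes\eta\mapsto\varphi^*\omega\cdot\eta.
\]
Setting $W:=\varphi^*H^0\pa{A,\Omega^1_A}\subset H^0\pa{X,K_X}$, which has dimension $n$, we obtain $\rk\kappa=\rk\mu=\dim\pa{W\cdot H^0\pa{X,K_X}}$. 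Consequently the proposition is reduced to the two inequalities
\[
\dim\pa{W\cdot H^0\pa{X,K_X}}\ge 2g-1 \ \ (n=2),\qquad \dim\pa{W\cdot H^0\pa{X,K_X}}\ge 2g \ \ (n\ge3).
\]

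For the rank bounds I would use the base-point-free pencil trick. When $n=2$, $W=\langle s_0,s_1\rangle$ is a pencil; letting $B$ be its base divisor, the trick identifies $s_0H^0\pa{X,K_X}\cap s_1H^0\pa{X,K_X}$ with $H^0\pa{X,\fascio{X}\pa{B}}$, so that $\dim\pa{W\cdot H^0\pa{X,K_X}}=2g-h^0\pa{X,\fascio{X}\pa{B}}$; thus the first inequality is equivalent to $h^0\pa{X,\fascio{X}\pa{B}}=1$. When $n\ge3$, after factoring out the common base divisor I may assume $W$ base-point-free, and then I would choose two distinct base-point-free pencils $\langle s_0,s_1\rangle,\langle s_0,s_2\rangle\subset W$, each spanning a product space of dimension $2g-1$ sharing $s_0H^0\pa{X,K_X}$; an inclusion–exclusion argument controlling the syzygies among $s_0,s_1,s_2$ then produces the missing dimension and yields $\ge 2g$.

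The hard part, and the point where the argument should improve on \cite[Proposition 2.4]{Parameters}, is the control of these base loci and syzygies. The base divisor $B$ is the ramification locus of $\varphi$, and the bound $h^0\pa{\fascio{X}\pa{B}}=1$ fails exactly when $W$ is composed with a pencil; I expect to rule this out by showing it forces $\varphi$ to factor through a lower-genus curve, which places $X$ in a strictly smaller family, while the hyperelliptic curves and the maps $\varphi$ that are not birational onto their image must be treated separately. For $n\ge3$ the analogous obstacle is to bound $\ker\mu$, i.e. the unexpected quadratic syzygies among the $s_i$, which is governed by Petri's description of the canonical ideal once $X$ is non-hyperelliptic, non-trigonal and not a plane quintic; the remaining low-gonality curves would be checked directly.
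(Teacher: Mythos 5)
Your deformation-theoretic set-up is correct, and your Serre-duality identification of $\rk\kappa$ with the rank of the multiplication map $\mu$ on $W\otimes H^0\pa{C,\omega_C}$ is a faithful reformulation of the bound $\dim X\le h^0\pa{C,N}-n$, $N:=\varphi^*T_A/T_C$, which is also the paper's starting point. The gap is that the two rank inequalities to which you reduce everything are not proved (you explicitly defer them: ``I expect'', ``would be checked directly''), and in the surface case your own pencil-trick computation shows why this deferral is fatal: the base divisor $B$ of $W$ is the degeneracy divisor $D$ of $d\varphi$, so your bound reads $\dim X\le g-3+h^0\pa{\fascio{C}\pa{D}}$, and you need $h^0\pa{\fascio{C}\pa{D}}\le 1$ for \emph{every} birational $\varphi$. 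Nothing in your sketch delivers this: by Riemann--Roch it fails automatically whenever $\deg D\ge g+1$, and for smaller $\deg D$ it fails exactly when $D$ is special --- note that your assertion that failure occurs ``exactly when $W$ is composed with a pencil'' is incorrect, since the pencil-trick kernel is $H^0\pa{\fascio{C}\pa{B}}$, which measures a moving base divisor and is unrelated to composedness. More importantly, ``ruling out'' such $\varphi$ is the wrong strategy in principle: if a cuspidal $\varphi$ with $h^0\pa{\fascio{C}\pa{D}}\ge2$ exists, it does not contradict the proposition, because the extra directions in $\ker\kappa$ come from the torsion subsheaf $S\subset N$ (length $\deg D$, supported on the zeros of $d\varphi$) and need not correspond to actual motion of the image curve --- the purely first-order bound $\dim X\le\dim\ker\kappa$ is simply not sharp there. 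This is precisely the point of the paper's key ingredient, the theorem of Arbarello and Cornalba \cite{Petri}: sections of $S$ induce trivial deformations of $\varphi\pa{C}$, so one may replace $N$ by the locally free quotient $N'$ with $c_1\pa{N'}=\omega_C\pa{-D}$, and in the surface case $h^0\pa{N'}\le g$ holds with no hypothesis whatsoever on $D$. Your proposal has no substitute for this step.

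For $n\ge3$ the same torsion problem recurs, and in addition the ``inclusion--exclusion'' meant to produce $\rk\mu\ge 2g$ is exactly the hard Koszul-type statement and is never carried out; Petri's theorem controls $\sym^2H^0\pa{\omega_C}\ra H^0\pa{\omega_C^{\otimes2}}$ and the canonical ideal, not the rank of $\mu$ restricted to a \emph{special} $n$-plane $W$, and your sketch never uses the one special feature of $W$ (that it is pulled back from $A$). The paper's mechanism is quite different: $N'$ is globally generated of rank $n-1$, so $n-2$ general sections give $0\ra\fascio{C}^{n-2}\ra N'\ra\omega_C\pa{-D}\ra0$, whence $h^0\pa{N'}\le n-2+h^0\pa{\omega_C\pa{-D}}$; if $h^0\pa{N'}>g-3+n$ this forces $D=0$, the sequence splits ($C$ may be assumed non-hyperelliptic by the rigidity of hyperelliptic curves in abelian varieties \cite{KummerPirola}), and since a map of trivial bundles on a projective curve is constant one gets $T_C$ inside a constant subbundle $\fascio{C}^2\subset\varphi^*T_A$, contradicting nondegeneracy of $\varphi$. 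One last small point: replacing $A$ by the abelian subvariety generated by $\varphi\pa{X}$, as you do at the outset, changes $n$ and hence which of the two bounds you must prove; nondegeneracy is indeed implicit in the proposition (and holds in its application in the paper), but the dichotomy $g-2$ versus $g-3$ must then refer to the dimension of the generated subvariety.
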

\begin{proof}
Let $C$ be an irreducible smooth curve of genus $g$ and $\varphi\colon C \ra A$ be a morphism birational on the image. The space of the first-order deformations of $\varphi$ is $H^0\pa{C, N}$, where $N$ is the sheaf defined by the exact sequence
\begin{equation}
\label{eq:pullbackdef}
0 \ra T_C \ra \varphi^*T_A \ra N \ra 0.
\end{equation}
The sheaf $N$ is usually not locally free but there is an exact sequence
\begin{equation}
\label{eq:sky}
0 \ra S \ra N \ra N' \ra 0,
\end{equation}
where $S$ is the skyscraper sheaf, with support in the points of $C$ in which $d\varphi$ vanishes, and $N'$ is a locally free sheaf. Arbarello and Cornalba (see \cite{Petri}) proved that the infinitesimal deformations of $\varphi$ corresponding to sections of $S$ induce trivial deformations of the curve $\varphi\pa{C}$. It follows that the dimension of an irreducible sub-variety of the Hilbert scheme of curves on $A$, whose points parameterize curves of geometric genus $g$, has dimension less or equal than $h^0\pa{N'}$. This implies that $X$ has dimension at most $h^0\pa{N'}-n$.

From \eqref{eq:pullbackdef} and \eqref{eq:sky} we get
\[
c_1\pa{N'}=c_1\pa{N}-c_1\pa{S}=c_1\pa{\varphi^*T_A }- c_1\pa{T_C}-c_1\pa{S}=\omega_C\pa{-D},
\]
where $D$ is the divisor associated to the support of $S$.

If $A$ is an abelian surface, $N'$ is a line bundle and it follows that $h^0\pa{N'}\leq g$. Otherwise, $N'$ is a locally free sheaf of rank $n-1$, generated by the global sections (see \eqref{eq:pullbackdef}). Up to a suitable choice of $n-2$ independent global sections of $N'$, we have the following exact sequence
\begin{equation}
\label{eq:texidor}
  0 \ra \fascio{C}^{n-2}\ra N'\ra \omega_C\pa{-D}\ra 0.
\end{equation}
We want to prove that $\varphi\colon C \ra A$ depends on, at most, $g-3+n$ parameters. If not, $h^0\pa{N'}> g-3+n$ and the following inequality holds
\[
g-3+n<h^0\pa{N'}\leq n-2+h^0\pa{\omega_C\pa{-D}}.
\]
Thus $\deg D=0$, $N'=N$ and the exact sequence \eqref{eq:texidor} becomes
\begin{equation*}
  0 \ra \fascio{C}^{n-2}\ra N\ra \omega_C\ra 0.
\end{equation*}
By the rigidity of hyperelliptic curves in abelian varieties (see \cite[Section 2]{KummerPirola}), we can assume $C$ to be not hyperelliptic; it follows that the previous exact sequence splits and $N=\fascio{C}^{n-2}\oplus \omega_C$. The exact sequence \eqref{eq:pullbackdef} becomes
\begin{equation*}
0 \ra T_C \ra \fascio{C}^n \ra \fascio{C}^{n-2}\oplus \omega_C \ra 0.
\end{equation*}
and $T_C\subset \fascio{C}^{2}$. Thus we get a contradiction. 
\end{proof}

\begin{theorem}
\label{teo:conj45}
There are no curves of genus $2g-2$ on a very generic Jacobian of dimension $g=4,5$.
\end{theorem}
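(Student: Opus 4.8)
The plan is to reach a contradiction by a parameter count, keeping the notation of the setup preceding Proposition \ref{prop:bir}: the family $F\colon J(\mathcal{D})/\mathcal{V}\to J(\mathcal{C})/\mathcal{V}$ with factorization $G,H$ and kernel family $\mathcal{A}=\ker G$, where $\dim \mathcal{V}=3g-3$, $g(\mathcal{D}_t)=2g-2$ and $\dim\mathcal{A}_t=g-2$. By Proposition \ref{prop:bir}, for generic $t$ the map $L_t\colon \mathcal{D}_t\to \hat{\mathcal{A}}_t$ is birational onto its image, so $\mathcal{D}_t$ is the normalization of a curve of geometric genus $2g-2$ lying on the abelian variety $\hat{\mathcal{A}}_t$ of dimension $g-2$. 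I would show this configuration cannot vary in $3g-3$ moduli when $g=4,5$.

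First I would prove that the classifying map $\Phi\colon \mathcal{V}\to M_{2g-2}$, $t\mapsto [\mathcal{D}_t]$, is generically finite onto its image $X$, so that $\dim X=3g-3$. By Poincar\'e reducibility $J(\mathcal{D}_t)$ is isogenous to $\mathcal{A}_t\times J(\mathcal{C}_t)$; since $J(\mathcal{C}_t)$ is simple of dimension $g$ while every simple factor of $\mathcal{A}_t$ has dimension $\leq g-2<g$, the variety $J(\mathcal{C}_t)$ is the unique simple isogeny factor of dimension $g$ of $J(\mathcal{D}_t)$, hence is determined up to isogeny by $[\mathcal{D}_t]$ through Torelli. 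A positive-dimensional fiber of $\Phi$ would then keep $J(\mathcal{C}_t)$ in a single isogeny class along a positive-dimensional subvariety of $M_g^0$; but a fixed isogeny class meets the Jacobian locus only in a countable set, contradicting that $\mathcal{C}_t$ genuinely varies. Therefore $\dim X=3g-3$.

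Next I would bound $\dim X$ from above by separating the moduli of the abelian variety from those of the curve lying on it. Sending $[\mathcal{D}_t]$ to the class of $\hat{\mathcal{A}}_t$ (well defined up to isogeny, since $\mathcal{A}_t$ is the complement of the dimension-$g$ factor) gives a map from $X$ to the moduli of $(g-2)$-dimensional abelian varieties, whose image $Y$ has $\dim Y\leq \frac{(g-1)(g-2)}{2}$. The fiber of $X\to Y$ over a fixed $[\hat{\mathcal{A}}]$ parameterizes curves of geometric genus $2g-2$ on the abelian variety $\hat{\mathcal{A}}$ of dimension $g-2$, so Proposition \ref{prop:def} bounds its dimension by $(2g-2)-2$ when $g=4$ (here $\hat{\mathcal{A}}$ is a surface) and by $(2g-2)-3$ when $g=5$. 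Summing the two contributions gives
\[
g=4:\quad \dim X\leq 3+4=7<9=3g-3,\qquad g=5:\quad \dim X\leq 6+5=11<12=3g-3,
\]
contradicting $\dim X=3g-3$ in both cases and proving the theorem.

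The delicate step is the generic finiteness of $\Phi$: I must exclude that the curves $\mathcal{D}_t$ move in fewer moduli than the $\mathcal{C}_t$ along positive-dimensional loci, and this is exactly where the uniqueness of the dimension-$g$ simple factor and the countability of isogeny classes inside the Jacobian locus are indispensable. Once that is secured, the conclusion is a clean comparison between $3g-3$ and the sum of the abelian moduli $\frac{(g-1)(g-2)}{2}$ with the Arbarello--Cornalba-type bound of Proposition \ref{prop:def}; it is precisely the small size of $\frac{(g-1)(g-2)}{2}$ for $g=4,5$ that makes the inequality strict, consistently with Conjecture \ref{conj} being left open for larger $g$.
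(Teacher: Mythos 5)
Your proposal is correct and takes essentially the same route as the paper: Proposition \ref{prop:bir} reduces the theorem to ruling out that $L_t$ is birational onto its image, and your dimension count --- $3g-3$ against the $n\pa{n+1}/2$ moduli of the $(g-2)$-dimensional abelian variety plus the bound of Proposition \ref{prop:def} for genus-$(2g-2)$ curves on a fixed abelian variety, giving $7<9$ and $11<12$ --- is exactly the paper's computation of $w=6$ versus $4$ (resp.\ $5$). The only differences are presentational: you make explicit the generic finiteness of $t\mapsto[\m{D}_t]$ (which the paper leaves implicit, via the countability of isogeny quotients of a fixed $J\pa{\m{D}_t}$, as in Proposition \ref{prop:g+1}), and, to avoid the small wrinkle that $[\hat{\m{A}}_t]$ is determined by the isomorphism class $[\m{D}_t]\in M_{2g-2}$ only up to isogeny, you should define your map to the moduli of polarized abelian varieties on $\m{V}$ itself rather than on $X$.
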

\begin{proof}
By Proposition \ref{prop:bir}, it is sufficient to show that, for $g=4$ (resp. $g=5$) the map $L_t\colon \m{D}_t \ra \hat {\m{A}}_t$ (see \eqref{eq:L}) is not birational on the image. Set $n:=\dim \hat {\m{A}}_t=2$ (resp. $n=3$) and $w:=\pa{3g-3}-n\pa{n+1}/2=6$ (resp. $w=6$). If $L_t\colon \m{D}_t \ra \hat {\m{A}}_t$ would be birational on the image for all $t\in \m{V}$, there would be an abelian variety $A$, of dimension $n$, and an irreducible subvariety $X\subset M_{2g-2}$ of dimension $w$, whose points parameterize curves of geometric genus $2g-2=6$ (resp. $2g-2=8$) on $A$. By Proposition \ref{prop:def} we get a contradiction.
\end{proof}

\subsection{Some remarks on Conjecture \ref{conj}}

One can try to prove the conjecture with the same argument of Theorem \ref{teo:main} (see Section \ref{sec:mainTheorem}). The proof still works except for Theorem \ref{teo:xiao}, that we use in Proposition \ref{prop:norm}, whose hypotheses are no longer satisfied in the new setting. Thus, a possible approach to the conjecture is to extend the result of Theorem \ref{teo:xiao} also to the case $n\geq 4$ and $g\pa{D}\leq 2n-1$.

This is possible, for example, for $n=4$. Namely, we have the following result.

\begin{prop}
\label{prop:minixiao}
Let $J$ be a very generic Jacobian of dimension $4$, $D$ be a smooth projective curve of genus $7$ and $f\colon D \ra J$ be a non-constant map. Then the only deformations of $\pa{D,f}$, with $J$ fixed, are obtained by composing $f$ with translations.
\end{prop}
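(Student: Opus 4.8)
The plan is to recast the rigidity statement as the vanishing of a single Kodaira--Spencer class and then to run the infinitesimal method of adjoint images, as in \cite{Parameters, KummerPirola} and in the proof of Proposition \ref{prop:def}, at the borderline value $g\pa{D}=2n-1$ with $n=\dim J=4$.

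First I would record the deformation theory. Since $T_J\simeq\fascio{J}^{\,4}$ is trivial, $f^*T_J\simeq\fascio{D}^{\,4}$ and the first-order deformations of $\pa{D,f}$ with $J$ fixed form $H^0\pa{D,N}$, where $N$ is the cokernel in
\begin{equation*}
0 \ra T_D \ra f^*T_J\simeq \fascio{D}^{\,4} \ra N \ra 0.
\end{equation*}
As $H^0\pa{D,T_D}=0$, the subspace $H^0\pa{D,f^*T_J}\simeq\mathbb{C}^4$ injects into $H^0\pa{D,N}$ and coincides with the kernel of the connecting map $\partial\colon H^0\pa{D,N}\ra H^1\pa{D,T_D}$; these are exactly the translations. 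Hence the assertion is equivalent to $\partial=0$, that is, to the vanishing of the induced deformation class $\xi\in H^1\pa{D,T_D}$ of the abstract curve for every first-order deformation. Applying Theorem \ref{teo:xiao} to the normalization of the image reduces the non-birational case to that of a covering of a rigid curve of genus $<7$, so I may assume $f$ birational onto its image, and handle the torsion of $N$ as in Proposition \ref{prop:def} (by Arbarello--Cornalba it does not move the image curve).

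The key is to use that $J$, hence its weight-one Hodge structure, is kept fixed. The subspace $V:=f^*H^0\pa{J,\Omega_J}\subset H^0\pa{D,\omega_D}$ has dimension $4$ (injectivity of $f^*$ follows from the simplicity of $J$), and along the deformation it is a \emph{constant} sub-Hodge structure of the varying $H^1\pa{D_t}$. Griffiths transversality then forces $\xi\cup V\subseteq W$, where $W:=f^*H^1\pa{J,\fascio{J}}\subset H^1\pa{D,\fascio{D}}$ is the conjugate part, again of dimension $4$; thus $\xi$ induces a cup-product map $V\ra W$ between spaces with $\dim V+\dim W=2n=g\pa{D}+1$. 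I would then invoke the adjoint-image theorem: from $\xi\cup V\subseteq W$ one builds, via the base-point-free pencil trick, relations among the products $v\cdot w\in H^0\pa{D,\omega_D^{\otimes 2}}$ which, for non-hyperelliptic $D$, are incompatible with $\xi\neq0$ whenever $\dim V+\dim W\leq g\pa{D}$, and become borderline only at the equality $\dim V+\dim W=g\pa{D}+1$ occurring here. The hyperelliptic case is excluded by the rigidity of hyperelliptic curves in abelian varieties (\cite[Section 2]{KummerPirola}).

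The main obstacle is precisely this borderline: at $g\pa{D}=2n-1$ the adjoint estimate degenerates and leaves a finite list of exceptional configurations, namely special low-degree pencils on $D$, or the case where the complement of $J$ in $J\pa{D}$ is again a Jacobian. This is where the hypothesis $n=4$ enters: I would eliminate the surviving cases by a count of moduli as in Proposition \ref{prop:def} and Theorem \ref{teo:conj45}, reducing to the non-existence of a positive-dimensional family of genus-$7$ curves on the fixed $J$ whose Jacobians contain a constant copy of $H^1\pa{J}$. I expect the delicate point to be the exact bookkeeping of the adjoint relations at $\dim V+\dim W=g\pa{D}+1$, since it is this step that fails to extend verbatim to larger $n$.
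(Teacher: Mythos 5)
There is a genuine gap, and it sits exactly where you flagged it. The adjoint-image/IVHS machinery you set up is precisely Pirola's proof of Theorem \ref{teo:xiao}, and it yields $\xi=0$ only when $\dim V+\dim W\geq g\pa{D}+2$, i.e.\ $g\pa{D}\leq 2n-2$ (note your inequality is stated backwards: you write that the relations kill $\xi\neq 0$ whenever $\dim V+\dim W\leq g\pa{D}$). At the borderline $\dim V+\dim W=g\pa{D}+1$ of the present proposition the method does not leave a ``finite list of exceptional configurations'' that routine bookkeeping can dispatch: it genuinely fails, because non-rigid genus-$7$ curves on $4$-dimensional principally polarized abelian varieties actually exist. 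By Bardelli--Ciliberto--Verra \cite{bardcilverra}, whenever a genus-$7$ curve moves (beyond translations) on a ppav $A$ of dimension $4$, $A$ is the Prym variety of a double covering, with $4$ branch points, of a genus-$3$ curve --- this is exactly your surviving case ``the complement of $J$ in $J\pa{D}$ is again a Jacobian,'' here of a genus-$3$ curve --- and the family of such Pryms has dimension $6+4=10=\dim\mathcal{A}_4$ and dominates $\mathcal{A}_4$. So your concluding plan, to ``eliminate the surviving cases by a count of moduli as in Proposition \ref{prop:def},'' cannot succeed: the surviving configuration fills up a family of the same dimension as $\mathcal{A}_4$, and no dimension count produces a contradiction on a fixed but arbitrary $A$.

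What is missing is the global input that the paper's proof consists of entirely: it quotes \cite[Section 3]{bardcilverra}, namely that any family realizing a moving genus-$7$ curve on a family of $4$-dimensional ppavs exhibits each $\m{A}_{r\pa{t}}$ as such a Prym, with the $10$-dimensional base mapping densely to $\mathcal{A}_4$; since the Jacobian locus is a divisor in $\mathcal{A}_4$, a \emph{very generic} Jacobian of dimension $4$ carries no non-rigid genus-$7$ curve. The exclusion is thus not pointwise-infinitesimal but uses very-genericity of $J$ inside the Jacobian divisor relative to the Prym family. Your preliminary reductions are fine but peripheral: identifying translations with $\ker\pa{\partial\colon H^0\pa{D,N}\ra H^1\pa{D,T_D}}$ is correct, and in the non-birational case Riemann--Hurwitz with $g(\tilde W)\geq 4$ (simplicity of $J$) forces $\deg=2$, image genus $4$ and no ramification, so Theorem \ref{teo:xiao} plus the finiteness of \'etale double covers of a fixed curve gives rigidity there. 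But without importing the classification of \cite{bardcilverra} (or an equivalent analysis of the borderline case), the heart of the proposition remains unproved.
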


By replacing Theorem \ref{teo:xiao} by Proposition \ref{prop:minixiao}, the argument of Section \ref{sec:mainTheorem} provides an alternative proof (with respect to Theorem \ref{teo:conj45}) of the conjecture in the case $g=5$.

\begin{proof}[Proof of Proposition \ref{prop:minixiao}]
The statement is a direct result of a work by Bardelli, Ciliberto and Verra (see \cite{bardcilverra}). Consider a diagram of type
\[
\xymatrix{
\m{C}\ar[r]^{f} \ar[d]_{p}& \m{A}\ar[d]^{\pi}\\
U\ar[r]^{r} & B\\
}
\]
such that
\begin{itemize}
\item $p\colon \m{C} \ra U$ is a smooth projective family of irreducible curves of genus $7$ over a smooth connected base $U$;
\item $\pi\colon \m{A}\ra B$ is a smooth family of principally polarized abelian varieties of dimension $4$ over a smooth connected base $B$;
\item $\dim f\pa{\m{C}}=\dim r\pa{U}+1$;
\item $\dim B=10$.
\end{itemize}
In \cite[Section 3]{bardcilverra} it is proved that the natural map $B\ra \mathcal{A}_g$, to the moduli space of principally polarized abelian variety, is dense and, for each $t\in U$, $\m{A}_{r\pa{t}}$ is the Prym variety of a covering of a curve of genus $3$ by $\m{C}_t$. Since the Jacobian locus is a divisor in $\mathcal{A}_g$, it follows that a curve of genus $7$ on a very generic Jacobian of dimension $4$ is rigid.
\end{proof}

\begin{ack}
I would like to thank my Ph.D. advisor Gian Pietro Pirola for having introduced me to this subject and for all his help and patience during the preparation of the paper.
\end{ack}

%

\end{document}